\newtheorem{Theorem}{Theorem}[section]
\newtheorem{Proposition}[Theorem]{Proposition}
\newtheorem{Lemma}[Theorem]{Lemma}
\newtheorem{Remark}[Theorem]{Remark}
\def\QEDopen{{\setlength{\fboxsep}{0pt}\setlength{\fboxrule}{0.2pt}\fbox{\rule[0pt]{0pt}{1.3ex}\rule[0pt]{1.3ex}{0pt}}}}
\begin{document}

\vskip -1.5cm

\title{On nodal solutions of a nonlocal Choquard equation in a bounded domain}

\author{Changfeng Gui\thanks{Corresponding author},\quad
Hui Guo\thanks{Supported partially by the
National Natural Science Foundation of P. R. China (Grant No.11371128) }}

\maketitle

\vskip -5.5cm \small{
\begin{abstract}
In this paper, we are interested in the least energy nodal solutions to the following nonlocal Choquard equation with a local term
\begin{equation*}\left\{\begin{array}{rll}
-\Delta u&=\lambda|u|^{p-2}u+\mu \phi(x)|u|^{q-2}u\\
-\Delta \phi&=|u|^q\\
u&=\phi=0
\end{array}\right.
\begin{gathered}\begin{array}{rll}
&\mbox{in}\ \Omega,\\
&\mbox{in}\ \Omega,\\
&\mbox{on}\ \partial\Omega,
\end{array}\end{gathered}\end{equation*}
where $ \lambda,\mu>0, p\in [2,6), q\in (1,5)$ and $\Omega\subset \mathbb{R}^3$ is a bounded domain.  This problem may be seen as a nonlocal perturbation of the classical Lane-Emden equation $-\Delta u=\lambda|u|^{p-2}u$ in $\Omega.$  The problem has a variational functional with a nonlocal term $\mu\int_{\Omega}\phi|u|^q$.  The appearance of the nonlocal term  makes the variational functional very
 different from the local case $\mu=0$, for which the problem has ground state solutions and least energy nodal solutions if $p\in (2,6)$. The problem may also be
 viewed as a nonlocal  Choquard equation  with a local pertubation term when $\lambda \not =0$. For $\mu>0$,  we show that although ground state solutions always exist, the existence of  least energy
 nodal solution depends on $q$:  for  $q\in(1,2)$ there does not exist a least energy nodal solution  while for $q\in[2,5)$  such a solution exists.
 Note that $q=2$ is a critical value.  In the case of a linear
 local perturbation, i.e., $p=2,$ if $\lambda<\lambda_1,$ the problem has a positive ground state and a least energy nodal solution. However,  if $\lambda\geq \lambda_1,$  the problem has a ground state which changes sign. Hence it is also a least energy nodal solution.

\end{abstract}}

\bigskip\noindent
{\bf Key words}: Nonlocal Choquard equation; Least energy; nodal solutions; Variational methods.

\bigskip\noindent
{\bf Mathematics Subject Classification (2010)}: 35J20, 35J57, 35Q35.
\section{Introduction}
In this paper, we mainly study the following system
\begin{equation}\label{eq1.1}\left\{\begin{array}{rll}
-\Delta u&=\lambda|u|^{p-2}u+\mu \phi(x)|u|^{q-2}u\\
-\Delta \phi&=|u|^q\\
u&=\phi=0
\end{array}\right.
\begin{gathered}\begin{array}{rll}
&\mbox{in}\ \Omega,\\
&\mbox{in}\ \Omega,\\
&\mbox{on}\ \partial\Omega,
\end{array}\end{gathered}\end{equation}
where $p\in [2,6), q\in (1,5),\lambda,\mu>0$ and $\Omega\subset \mathbb{R}^3$ is a bounded domain.

When $\mu=0,$ system \eqref{eq1.1} is reduced to
\begin{equation}\label{eq1.2}
-\Delta u=|u|^{p-2}u\ \mbox{in $\Omega$},\ u=0\ \mbox{on $\partial\Omega$}.
\end{equation}
Equation \eqref{eq1.2} has been widely studied in the past decades and there are  many  results of \eqref{eq1.2} in the literature. In particular, the existence of ground state solutions and  least energy nodal solutions (sign-changing solutions) for $p\in(2,6)$ were considered in \cite{Bartsch-Weth-Willem,Castro-Cossio-Neuberger,Rabinowitz}. Here a least energy nodal solutions means a critical point of the associated energy functional  attaining  the infimum  under a constraint on Nehari nodal set (to be defined later).  For more results regarding \eqref{eq1.2},  one may refer to \cite{Rabinowitz,Willem,Zou} and references therein.

If $\mu<0$ and $q=2,$ system \eqref{eq1.1} becomes Schr\"odinger-Poisson equation in a bounded domain
\begin{equation}\label{eq1.1*}\left\{\begin{array}{rll}
-\Delta u+|\mu| \phi(x) u&=|u|^{p-2}u\\
-\Delta \phi&=|u|^2\\
u&=\phi=0
\end{array}\right.
\begin{gathered}\begin{array}{rll}
&\mbox{in}\ \Omega,\\
&\mbox{in}\ \Omega,\\
&\mbox{on}\ \partial\Omega.
\end{array}\end{gathered}\end{equation}
In this case, Ruiz and Siciliano \cite{Ruiz-Siciliano} proved that if $p\in(3,6),$ equation \eqref{eq1.1*} has one solution for almost every $|\mu|>0$; if $p=3,$  equation \eqref{eq1.1*} has one solution for small $|\mu|>0$ but no solutions for large $|\mu|>0;$ while $p\in(2,3),$  equation \eqref{eq1.1*} has two solutions for small $|\mu|>0$ but no solutions for large $|\mu|>0.$  These results show that $p=3$ is a critical value. Moreover, Alves and Souto \cite{Alves-Souto-nodalsolution} obtained a least energy nodal solution for general nonlinearity $f(u)$ instead of $|u|^{p-2}u$ when $|\mu|=1$, and Batkam \cite{Batkam} found infinitely many high energy nodal solutions. Similar problems for $\Omega=\mathbb{R}^3$ have also been  studied, including  the following Schrodinger-Poisson equation
\begin{equation}\label{eq1.3}
-\Delta u+u+|\mu| \phi u=|u|^{p-2}u\quad\mbox{in}\ \mathbb{R}^3.
\end{equation}
In \cite{Ambrosetti-Ruiz,Azzollini-Pomponio,Ruiz},  the existence of ground state solutions and multiple solutions are shown. Moreover, via the method of Nehari nodal set, radial nodal solutions of \eqref{eq1.3} with  $p\in (4,6)$  are obtained in \cite{Kim-Seok,Liu-Wang-Zhang}, while nonexistence of least energy nodal solutions are shown in \cite{Guo}. By applying the invariant sets of descending flow, Wang and Zhou \cite{Wang-Zhou} proved infinitely many nodal solutions of \eqref{eq1.3} with a suitable potential term $V(x)u$ for $p\in (3,6)$. For more information regarding Schr\"odinger-Poisson equation, the reader can see \cite{Alves-Souto-nodalsolution,Ambrosetti-Ruiz,Azzollini-Pomponio,Batkam,Kim-Seok,
Liu-Wang-Zhang,Pisani-Siciliano,Ruiz,Ruiz-Siciliano,Wang-Zhou}.

When $\mu>0$ and $\Omega=\mathbb{R}^N$, the following related equation \eqref{eq1.1}
\begin{equation}\label{quankj}
-\Delta u+u=\left(\int_{\mathbb{R}^N}\frac{|u(y)|^q}{|x-y|^{N-\alpha}}dy\right)|u|^{q-2}u+|u|^{p-2}u\quad\mbox{in}\ \mathbb{R}^N,
\end{equation}
were considered in \cite{Ao,Chen-Guo}, where $\alpha\in(0,N).$ Precisely, when
 $N=3,\alpha\in(2,3), p=2$ and $4\leq q<6,$ the existence of solutions was obtained in \cite{Chen-Guo}. When either $N=4,\alpha\in(0,N),q\in (3,4)$ or $N\geq 5,\alpha\in(0,N),q\in (2,2^*=\frac{2N}{N-2})$, one nontrivial solution of \eqref{quankj} at $p=\frac{N+\alpha}{N-2}$ was obtained in \cite{Ao}.

The above equation is a modified model of  Choquard equation
\begin{equation}\label{eq1.4}
-\Delta u+u=\left(\int_{\mathbb{R}^N}\frac{|u(y)|^q}{|x-y|^{N-\alpha}}\right)|u|^{q-2}u\quad\mbox{in}\ \mathbb{R}^N,
\end{equation}
which was well studied recently.  Choquard equation (also called Schr\"odinger-Newton system) appears in various physical models. It was proposed by Pekar in 1954 for describing the quantum mechanics of a polaron, and derived by Choquard for describing an electron trapped in its own hole and by Penrose for selfgravitating matter. One may refer to \cite{Choquard-Stubbe-Vuffray,Tod-Moroz,Wei-Winter} and references therein for more information and details. By via the  odd Nehari manifold and Nehari nodal sets (see the exact definition later) and  the minimax principle,  the existence of least energy nodal solutions for $q\in[2,\frac{N+\alpha}{N-2})$  and nonexistence of such solutions for $q\in(1,2)$  are shown in  \cite{Ghimenti-Moroz,Ghimenti-Shaftingen-JFA2016},  with  $q=2$ being  a critical value. The interested reader can refer to  \cite{Ghimenti-Moroz,Ghimenti-Shaftingen-JFA2016,Ye-hongyukirchhoff-choquard}  for some recent results on least energy nodal solutions in $\mathbb{R}^3,$ and \cite{Moroz-Schaftingen, Moroz-Schaftingen-guide} for more related results about Choquard equation.

On the other hand, there are few results about \eqref{eq1.1} in bounded domain. In \cite{Azzollini-D'Avenia}, by using a truncation argument and monotonicity trick, the authors show  that \eqref{eq1.1} has a mountain-pass solution for $\mu $ small enough.  However, it is unknown whether  ground state solutions or least energy nodal solutions of \eqref{eq1.1} exist or not.

In this paper, we shall give a quite complete answer to the above question.

Regarding gound state solutions, we shall show
\begin{Theorem}\label{prop}
Assume that $p\in (2,6)$ and $q\in (1,5).$ Then for any $\lambda,\mu>0,$
 problem \eqref{eq1.1} has a ground state solution, and any ground state solution  must not change sign in $\Omega.$
\end{Theorem}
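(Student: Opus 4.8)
The plan is to eliminate $\phi$ and reduce \eqref{eq1.1} to a single nonlocal equation for $u$, then run a Nehari-manifold minimization, taking full advantage of the fact that a bounded domain makes every relevant embedding compact. For each $u\in H_0^1(\Omega)$ the second equation has a unique solution $\phi_u=(-\Delta)^{-1}|u|^q\ge 0$ (Green function plus maximum principle), and substituting $\phi=\phi_u$ into the first equation I would work with the energy
\[
I(u)=\frac12\int_\Omega|\nabla u|^2-\frac{\lambda}{p}\int_\Omega|u|^p-\frac{\mu}{2q}\int_\Omega\phi_u|u|^q
\]
on $H_0^1(\Omega)$. A Hardy--Littlewood--Sobolev / Green-function estimate gives $\int_\Omega\phi_u|u|^q\le C\|u\|_{6q/5}^{2q}\le C'\|\nabla u\|_2^{2q}$, finite precisely because $q<5$; this term is nonnegative, homogeneous of degree $2q$, and depends on $u$ only through $|u|$. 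The decisive structural fact I would record at the outset is that both nonlinear terms are even, so that $\|\nabla|u|\|_2=\|\nabla u\|_2$ and $\phi_{|u|}=\phi_u$ yield $I(|u|)=I(u)$.

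Next I would set up $\mathcal N=\{u\ne 0:J(u)=0\}$ with $J(u):=\langle I'(u),u\rangle=\|\nabla u\|_2^2-\lambda\|u\|_p^p-\mu\int_\Omega\phi_u|u|^q$. Because $p>2$ and $2q>2$, for each $u\ne 0$ the fibering map $t\mapsto I(tu)=\tfrac{t^2}2A-\tfrac{\lambda t^p}{p}B-\tfrac{\mu t^{2q}}{2q}C$ (with $A,B>0$, $C\ge 0$) has a unique maximizer $t_u>0$ and $t_uu\in\mathcal N$; moreover on $\mathcal N$ one computes $\langle J'(u),u\rangle=\lambda(2-p)\|u\|_p^p+\mu(2-2q)\int_\Omega\phi_u|u|^q<0$, so $\mathcal N$ is a natural constraint and every minimizer of $I|_{\mathcal N}$ is a genuine weak solution. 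Coercivity I would obtain by eliminating the awkward term: using $I=I-\tfrac1pJ$ when $p\le 2q$ and $I=I-\tfrac1{2q}J$ when $p\ge 2q$, each case expresses $I$ on $\mathcal N$ as a sum of two terms with nonnegative coefficients, giving $I(u)\ge c_0\|\nabla u\|_2^2$ with $c_0>0$. Together with the Sobolev lower bound $\|\nabla u\|_2\ge\delta>0$ forced by the Nehari identity (since $p,2q>2$), this yields $c:=\inf_{\mathcal N}I>0$.

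To attain $c$, take a minimizing sequence; it is bounded by coercivity, so by Rellich--Kondrachov ($\Omega$ bounded, $p<6$ and $6q/5<6$ subcritical) a subsequence satisfies $u_n\rightharpoonup u$ in $H_0^1$ with $u_n\to u$ strongly in $L^p$ and $L^{6q/5}$, whence $\|u_n\|_p\to\|u\|_p$ and $\int_\Omega\phi_{u_n}|u_n|^q\to\int_\Omega\phi_u|u|^q$. The strong convergence of the nonlinear terms together with $\|\nabla u_n\|_2\ge\delta$ forces $u\ne 0$, and weak lower semicontinuity of $\|\nabla\cdot\|_2$ gives $J(u)\le 0$, hence $t_u\le 1$. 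Writing $I$ on $\mathcal N$ in the weakly-lower-semicontinuous form of the previous paragraph and using $t_u\le 1$ with the nonnegative coefficients, I get $c\le I(t_uu)\le\liminf I(u_n)=c$, so $t_uu$ attains $c$ and is a ground state (the same chain forces $\|\nabla u_n\|_2\to\|\nabla u\|_2$ and $t_u=1$).

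Finally, the sign statement is where the even symmetry pays off. If $u$ is any ground state then $|u|\in\mathcal N$ and $I(|u|)=I(u)=c$, so $|u|$ also minimizes $I|_{\mathcal N}$ and is therefore itself a weak solution of \eqref{eq1.1}. Elliptic bootstrapping, using the subcritical growth of $|u|^{p-2}u$ and the boundedness/high integrability of $\phi_u$, makes $|u|\in C(\Omega)$; since $-\Delta|u|=\lambda|u|^{p-1}+\mu\phi_{|u|}|u|^{q-1}\ge 0$ with $|u|\ge 0$, the strong maximum principle yields $|u|>0$ throughout the connected domain $\Omega$. As $u$ and $|u|$ share the same zero set, $u$ never vanishes and keeps a constant sign, so no ground state can change sign. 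The genuine work here is the compactness/attainment step --- controlling the nonlocal term under weak convergence and running the projection comparison cleanly through both regimes $p\lessgtr 2q$ --- whereas the sign conclusion is essentially free once the identity $I(u)=I(|u|)$ and the natural-constraint property are in hand, the only delicate point being the regularity needed to apply the maximum principle.
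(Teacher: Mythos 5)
Your proof is correct, and it reaches attainment by a genuinely different mechanism than the paper. The shared skeleton is the same: reduction to a single nonlocal equation via $\phi_u=(-\Delta)^{-1}|u|^q$, the Nehari manifold with the unique fibering maximizer (this is the paper's Lemma \ref{nehariproperty}), the coercivity trick with $\theta=\min\{p,2q\}>2$ (your case split $p\lessgtr 2q$ is the same computation), compactness of the nonlocal term from Proposition \ref{lem1.1}(iii)/Rellich--Kondrachov, and an identical sign argument ($I(|u|)=I(u)$, elliptic regularity, strong maximum principle). Where you diverge: the paper first proves the mountain-pass characterization $m_q=m_q^1=m_q^2$ (Lemma \ref{lemma3.2}) and then invokes the minimax principle (Willem, Theorem 2.8) to produce a $(PS)_{m_q}$ sequence, so that the weak limit is \emph{automatically} a critical point ($I_q'(u^*)=0$ follows directly from $I_q'(u_n)\to 0$ plus compactness), with strong convergence recovered afterwards from $I_q'(u_n)u_n\to 0=I_q'(u^*)u^*$. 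You instead minimize directly over $\mathcal{N}_q$: weak lower semicontinuity gives $J(u)\le 0$, hence $t_u\le 1$ for the Nehari projection, and monotonicity of the coefficients in the $I-\tfrac1\theta J$ form closes the chain $c\le I(t_uu)\le\liminf I(u_n)=c$, forcing $t_u=1$ and strong convergence. The price of your route is that a constrained minimizer is not a priori a solution, so you genuinely need the natural-constraint step: $\langle J'(u),u\rangle=\lambda(2-p)\|u\|_p^p+\mu(2-2q)\int_\Omega\phi_u|u|^q<0$ on $\mathcal{N}_q$ (which also guarantees $J'\neq 0$ there, so $\mathcal{N}_q$ is a $C^1$ manifold and the Lagrange multiplier vanishes) --- a step the paper's $(PS)$-sequence argument bypasses entirely. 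What each approach buys: yours is more elementary and self-contained (no deformation/minimax machinery, no Ekeland) and shows in passing that minimizing sequences on $\mathcal{N}_q$ converge strongly; the paper's pays the overhead of Lemma \ref{lemma3.2} but gets criticality for free, and that mountain-pass identification of $m_q$ is structural information reused in the rest of the paper. Both arguments use $q<5$ and $p<6$ in exactly the same places (the $L^{6q/5}$ and $L^p$ subcritical embeddings), so the hypotheses are exploited identically.
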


Regarding nodal solutions, we have
\begin{Theorem}\label{theoremmain}
Assume that $p\in(2,6).$ Then the following statements are true:
\item{(i)} if $2\leq q<5,$ then for any $\lambda,\mu>0,$ problem \eqref{eq1.1} possesses at least one least energy nodal solution;
\item{(ii)} if $1<q<2,$ then for any $\lambda,\mu>0,$ problem \eqref{eq1.1} has no least energy nodal solutions.
\end{Theorem}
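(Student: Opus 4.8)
The plan is to recast \eqref{eq1.1} variationally and to study the minimization of the energy on the nodal Nehari set, the whole dichotomy being driven by the coupling exponent $q$. Writing $\phi_u=\int_\Omega G(x,y)|u(y)|^q\,dy$ with $G>0$ the Green function of $-\Delta$ on $\Omega$, solutions are the critical points of $I(u)=\frac12\int_\Omega|\nabla u|^2-\frac{\lambda}{p}\int_\Omega|u|^p-\frac{\mu}{2q}\int_\Omega\phi_u|u|^q$ on $H_0^1(\Omega)$. Set $D(v)=\int_\Omega\phi_v|v|^q$ and, for $u=u^++u^-$, the interaction $B=\iint_{\Omega\times\Omega}G(x,y)|u^+(x)|^q|u^-(y)|^q\,dx\,dy>0$. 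Since $\int_\Omega\nabla u\cdot\nabla u^\pm=\int_\Omega|\nabla u^\pm|^2$ and $\phi_u=\phi_{u^+}+\phi_{u^-}$, the nodal Nehari set $\mathcal M=\{u:u^\pm\neq0,\ \langle I'(u),u^+\rangle=\langle I'(u),u^-\rangle=0\}$ is cut out by $\int_\Omega|\nabla u^\pm|^2=\lambda\int_\Omega|u^\pm|^p+\mu D(u^\pm)+\mu B$, and on $\mathcal M$ one computes $I(u)=\lambda(\tfrac12-\tfrac1p)\int_\Omega|u|^p+\mu(\tfrac12-\tfrac1{2q})(D(u^+)+D(u^-))+\mu(1-\tfrac1q)B$. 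Because $\Omega$ is bounded and $p<6$, $6q/5<6$, the embeddings $H_0^1(\Omega)\hookrightarrow L^p(\Omega)$ and $H_0^1(\Omega)\hookrightarrow L^{6q/5}(\Omega)$ are compact, so $\int|u|^p$, $D$ and $B$ are weakly continuous and, in contrast with the $\mathbb R^3$ problem, there is no loss of compactness; the entire difficulty is the geometry of $\mathcal M$.

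For (i) I would minimize $I$ on $\mathcal M$. First, for every $u$ with $u^\pm\neq0$ I would solve the fibering system $\partial_s\Psi_u=\partial_t\Psi_u=0$ for $\Psi_u(s,t)=I(su^++tu^-)$, showing it has a unique solution $(s_u,t_u)\in(0,\infty)^2$ which is the global maximum of $\Psi_u$; here $q\ge2$ is used decisively, since the cross term contributes $\mu s^{q-2}t^q B$ to the $s$-equation and its exponent $q-2\ge0$ keeps each equation monotone, giving existence and uniqueness of the projection onto $\mathcal M$. From the energy identity above all coefficients are positive, so $m:=\inf_{\mathcal M}I>0$, the Nehari constraints force $\int|\nabla u^\pm|^2\ge\delta>0$ uniformly, and any minimizing sequence is bounded. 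By the compact embeddings the weak limit $u$ retains $u^\pm\neq0$ and lies in $\mathcal M$ with $I(u)=m$. Finally I would check that $\mathcal M$ is a natural constraint: the fibering Hessian at $(s_u,t_u)$ is negative definite — using $q\ge2$ together with the Cauchy–Schwarz bound $B^2\le D(u^+)D(u^-)$ for the positive kernel $G$ — so the Lagrange multipliers vanish and the minimizer is a genuine sign-changing critical point, i.e.\ a least energy nodal solution.

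For (ii) the sign of $q-2$ reverses the picture. When $1<q<2$ the cross term $\mu s^{q-2}t^q B$ blows up as $s\to0^+$, so the fibering map loses its interior maximum: I would show that enlarging one nodal component forces the optimal scaling of the other component to $0$, i.e.\ the maximum of $\Psi_u$ escapes to the boundary $\{s=0\}\cup\{t=0\}$ of the quadrant, where the function is one-signed. Consequently any sequence chosen to minimize $I$ over $\mathcal M$ degenerates, one of $u_n^\pm$ becoming trivial in the limit, and the nodal infimum is approached only through functions losing their sign. I would then argue that $\inf_{\mathcal M}I$ cannot be attained by a sign-changing function, so that, every nodal solution lying in $\mathcal M$, problem \eqref{eq1.1} has no least energy nodal solution. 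The quantitative heart is the comparison of the one-signed ground state level (which exists by Theorem \ref{prop}) with the constrained level, showing the latter is realized only in the degenerate one-signed limit.

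The main obstacle is precisely this nonexistence mechanism: proving rigorously that for $1<q<2$ minimizing sequences on $\mathcal M$ must degenerate, and controlling the limiting value of $I$ along them, since the subquadratic coupling links the scalings of $u^+$ and $u^-$ in a non-monotone way and destroys the clean fibering analysis available for $q\ge2$. In the existence range the analogous delicate point is establishing the negative definiteness of the fibering Hessian, and hence that $\mathcal M$ is a natural constraint; both reduce to understanding how the interaction term $B$, weighted by the exponent $q$, competes with the self-interaction terms $D(u^\pm)$.
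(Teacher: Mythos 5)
Your proposal has a genuine gap at the endpoint $q=2$, which your existence argument absorbs into the range $q\ge 2$ but which the fibering analysis cannot handle. At $q=2$ the cross term in the fiber map $\Psi_u(s,t)=I(su^++tu^-)$ is $-\tfrac{\mu}{2}s^2t^2B$, so (after dividing by $s$) the $s$-equation reads $\|u^+\|^2=\lambda s^{p-2}\int_\Omega|u^+|^p+\mu s^{2}D(u^+)+\mu t^{2}B$; solvability forces $t^2<\|u^+\|^2/(\mu B)$, so the projection onto the nodal Nehari set need not exist when the nodal parts interact strongly, and your monotonicity heuristic (exponent $q-2\ge0$) degenerates. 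Worse, in the Hessian determinant \eqref{eq4.22} the stabilizing coefficients $(2-q)(2-2q)$ vanish at $q=2$ while the cross term contributes $[(2-q)^2-q^2]E^2=-4E^2<0$, so together with the $(p-2)$-terms the sign is genuinely indeterminate when $p>2$ --- this is exactly the content of the Remark following Lemma \ref{eq4.12}, and your Cauchy--Schwarz bound $B^2\le D(u^+)D(u^-)$ buys nothing there because its coefficient is zero. The paper accordingly proves $q=2$ by a completely separate argument: it takes the minimizers $u_q$ obtained for $q\in(2,5)$, shows $\limsup_{q\searrow2}m_{nod,q}\le m_{nod,2}$, passes to the limit, and then must prove the limit is nonzero and still sign-changing via normalized sequences $v_{q_n}=u^-_{q_n}/\|u^-_{q_n}\|$, a measure-theoretic argument, and the strong maximum principle. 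None of this is in your proposal, and $q=2$ is part of statement (i).

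Even for $q>2$ the two steps you assert are precisely where the paper has to work. Monotonicity of each scalar fiber equation in its own variable does not give uniqueness or global maximality of the coupled system's solution; the paper proves only existence (Miranda's theorem), a \emph{local} strict maximum (negative definite Hessian, where $q>2$ enters through \eqref{eq4.22}), and continuity of the projection via Brouwer degree --- and it explicitly states that Alves--Souto-type arguments fail here because the functional lacks the global-max property of the nodal Nehari set, which is exactly what your step ``the weak limit retains $u^\pm\neq0$ and lies in $\mathcal{M}$ with $I(u)=m$'' implicitly invokes (to transfer the constraint to the limit one needs $I(su_n^++tu_n^-)\le I(u_n)$ for \emph{all} $s,t$, i.e.\ global maximality). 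In its place the paper runs Ekeland's principle on $\mathcal{N}_{nod,q}$ and a quantitative implicit-function-theorem/Taylor argument --- boundedness of $(\tilde t_n(\delta)-1)/\delta$ and $(\tilde s_n(\delta)-1)/\delta$, using the positivity of the Jacobian \eqref{eq4.29} --- to upgrade the minimizing sequence to a genuine Palais--Smale sequence; this is the technical heart of Case 1 and is absent from your sketch. For (ii) your plan has the right moral (show the nodal level equals the ground-state level $m_q$ and invoke the sign-definiteness of ground states from Theorem \ref{prop}), but ``the maximum of $\Psi_u$ escapes to the boundary'' is not a proof and is even slightly off, since elements of $\mathcal{N}_{nod,q}$ satisfy interior criticality by definition; the paper's actual mechanism is the explicit family $u_\delta=u-\delta^{2/(2-q)}\eta\bigl(\frac{\cdot-a}{\delta}\bigr)$, whose negative bump is calibrated by the exponent $\frac{2}{2-q}$, projected into $\mathcal{N}_{nod,q}$ via the implicit function theorem at $\delta=0$ with energy converging to $I_q(u)$; constructing such nodal competitors at the ground-state level is the step your outline would need to supply.
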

Furthermore, when $p=2,$ we have the following results.

\begin{Theorem}\label{zhdl-1}
 Let $\lambda_1$ be the first eigenvalue of $-\Delta.$ Suppose that $\lambda\in(-\infty,\lambda_1)$ and $p=2$.
\item{(i)} If $2\leq q<5,$ then for any $\mu>0,$  problem \eqref{eq1.1} has one ground state and one least energy nodal solution;
\item{(ii)} If $1<q<2,$ then for any $\mu>0,$  problem \eqref{eq1.1} has one ground state but no  least energy nodal solution.
\end{Theorem}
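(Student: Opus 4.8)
The plan is to use the hypothesis $\lambda<\lambda_1$ to reduce \eqref{eq1.1} with $p=2$ to a problem driven by a single superquadratic nonlocal term. Solving the second equation gives $\phi_u=(-\Delta)^{-1}|u|^q=\int_\Omega G(x,y)|u(y)|^q\,dy$ with $G>0$ the Dirichlet Green's function, and the first equation becomes the Euler--Lagrange equation of
\[
I(u)=\tfrac12\|u\|^2-\tfrac{\mu}{2q}D(u),\qquad \|u\|^2:=\int_\Omega|\nabla u|^2-\lambda\int_\Omega u^2,\quad D(u):=\int_\Omega\!\int_\Omega G(x,y)|u(x)|^q|u(y)|^q.
\]
By Poincar\'e's inequality, $\lambda<\lambda_1$ makes $\|\cdot\|$ an equivalent norm on $H_0^1(\Omega)$, so $I$ carries exactly one nonlinear term $D$, homogeneous of degree $2q>2$ and strictly positive. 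This is the structural simplification over Theorems \ref{prop}--\ref{theoremmain}: there the two powers $p\neq 2q$ force a genuinely two-parameter fibering, whereas here the functional is of pure Choquard type and the single homogeneity controls everything.

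For the ground state, valid in both (i) and (ii), I would minimize $I$ on the Nehari manifold $\mathcal{N}=\{u\neq0:\|u\|^2=\mu D(u)\}$. Since $2q>2$, the fibering map $t\mapsto I(tu)$ has for each $u\neq0$ a unique positive maximum, so $\mathcal{N}$ is a natural constraint and $c:=\inf_{\mathcal{N}}I=\tfrac{q-1}{2q}\inf_{\mathcal{N}}\|u\|^2>0$. Because $\Omega$ is bounded and $q<5<6$, the embeddings $H_0^1\hookrightarrow L^s$ are compact, $D$ is weakly continuous, and a normalized minimizing sequence converges to a minimizer; a standard Lagrange-multiplier argument makes it a solution. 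The identities $D(u)=D(|u|)$ and $\||u|\|=\|u\|$ give $I(u)=I(|u|)$, so $|u|$ is also a minimizer and the strong maximum principle produces a positive ground state; the same symmetrization shows that no ground state changes sign, exactly as in Theorem \ref{prop}.

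For the nodal solution in (i), $2\le q<5$, I would pass to the Nehari nodal set $\mathcal{M}=\{u:u^\pm\neq0,\ I'(u)u^+=I'(u)u^-=0\}$ and study the two-dimensional fiber $\Phi(s,t)=I(su^++tu^-)$. Since $u^+,u^-$ have disjoint supports, $D(su^++tu^-)=s^{2q}D(u^+)+t^{2q}D(u^-)+2s^qt^qB$ with cross term $B:=\int\!\int G(x,y)|u^+(x)|^q|u^-(y)|^q>0$, and the membership equations $\|u^+\|^2=\mu[s^{2q-2}D(u^+)+s^{q-2}t^qB]$ (and its symmetric partner) have, for $q\ge2$, strictly increasing right-hand sides in each variable; this yields a unique pair $(s_0,t_0)$ projecting any $u$ with $u^\pm\neq0$ onto $\mathcal{M}$, and it is the global maximum of $\Phi$. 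The key nondegeneracy is that the Hessian of $\Phi$ at this maximum is negative definite: a direct computation reduces positivity of its determinant to the strict Cauchy--Schwarz inequality $B^2<D(u^+)D(u^-)$ together with $q\ge2$. One then minimizes $I$ over $\mathcal{M}$, uses compactness to extract a minimizer $u_0$, and runs a quantitative deformation along the $(s,t)$-fibers to conclude that $u_0$ is a genuine critical point, hence a least energy nodal solution.

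The heart of the theorem is the nonexistence in (ii), $1<q<2$, and I expect it to be the main obstacle. The same Hessian computation now contains the term $(2-q)B>0$, which can overwhelm $-(2q-2)D(u^\pm)$, so the critical point $(1,1)$ of the fiber is no longer a maximum and $\mathcal{M}$ ceases to be a natural constraint. The plan is to turn this structural failure into genuine nonexistence: assuming a least energy nodal solution $u$ existed, it would lie in $\mathcal{M}$ and minimize $I$ there, yet the non-maximality of its fiber should allow one to decrease the energy within the sign-changing class, or to show that $\inf_{\mathcal{M}}I$ coincides with the splitting level $2c$ and is never attained. The delicate point, and the reason this case is hard, is that the bounded domain rules out the translation-to-infinity that drives the decoupling in the $\mathbb{R}^N$ Choquard setting; the decoupling must instead be produced intrinsically from the scaling forced by $q<2$ and the strict positivity of $G$, and controlling the interaction $B$ against the self-energies $D(u^\pm)$ in this regime is where the real work lies.
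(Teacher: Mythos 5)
Your reduction to the equivalent norm $\|u\|^2=\int_\Omega|\nabla u|^2-\lambda\int_\Omega u^2$ is exactly the paper's first move, and your ground-state argument and your treatment of the nodal case $2<q<5$ are sound and essentially the paper's Case 1 (the paper uses Ekeland plus the implicit function theorem where you propose a quantitative deformation, but these are interchangeable). The first genuine gap is at $q=2$ in part (i). Your claim that monotonicity of the membership equations ``yields a unique pair $(s_0,t_0)$ projecting any $u$ with $u^\pm\neq0$ onto $\mathcal{M}$'' fails there: at $q=2$ the system is \emph{linear} in $(t^2,s^2)$, namely $t^2E_1+s^2E=\|u^+\|^2$ and $t^2E+s^2E_2=\|u^-\|^2$ in the notation of \eqref{eq4.9}, and its unique algebraic solution is positive only under the sign conditions $\|u^+\|^2E_2>\|u^-\|^2E$ and $\|u^-\|^2E_1>\|u^+\|^2E$, which do not hold for arbitrary sign-changing $u$; monotonicity of each scalar equation does not make the coupled system solvable. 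This is precisely why the paper's projection lemma (Lemma \ref{eq4.12}) is proved only for $q\in(2,5)$ --- its radius function $R(r)=\bigl(r^{2-q}\|u^+\|^2/4\int_\Omega\phi_{u^-}|u^+|^q\bigr)^{1/q}$ blows up as $r\to0$ only when $q>2$ --- and why the paper obtains $q=2$ not directly but by the limit $q\searrow2$ (Case 2 of the proof of Theorem \ref{theoremmain}), an argument that transfers verbatim to $p=2$, $\lambda<\lambda_1$ after the norm substitution. (Your Hessian remark is, incidentally, correct for the pure nonlocal functional: at $q=2$ the determinant equals $4(ts)^2(E_1E_2-E^2)>0$ by strict Cauchy--Schwarz for disjointly supported $u^\pm$; but local nondegeneracy near a putative minimizer does not supply the global projection your minimization scheme needs, nor even nonemptiness of $\mathcal{M}$ as written.)

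The second gap is that part (ii) is not actually proved, and the one concrete mechanism you name is the wrong one: the nodal infimum does not sit at a ``splitting level $2c$.'' The paper's Case 3 shows instead that $m_{nod,q}=m_q$, the \emph{single} ground-state level. The missing construction is the small-bump perturbation: take a nonnegative $u\in C_c^\infty(\Omega)\cap\mathcal{N}_q$ and set $u_\delta=u-\delta^{\frac{2}{2-q}}\eta\bigl(\frac{x-a}{\delta}\bigr)$ with $\eta$ a cutoff supported away from $\operatorname{supp}u$; because $q<2$, the exponent $\frac{2}{2-q}$ is positive and large, so the negative bump's self-energy and interaction vanish as $\delta\to0$, and an implicit-function-theorem argument produces $(t_q(\delta),s_q(\delta))$ with $t_q(\delta)u_\delta^++s_q(\delta)u_\delta^-\in\mathcal{N}_{nod,q}$ and $I_q(t_q(\delta)u_\delta^++s_q(\delta)u_\delta^-)\to I_q(u)$. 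Hence $m_{nod,q}\leq m_q$, so equality holds, and attainment would yield a sign-changing minimizer on $\mathcal{N}_q$, contradicting the maximum-principle fact that ground states are signed (the $p=2$, $\lambda<\lambda_1$ analogue of Theorem \ref{prop}). Your intuition that the bounded domain forbids translation-based decoupling is right, but the resolution is this intrinsic vanishing-bump scaling, which your proposal identifies as ``where the real work lies'' without supplying it; as it stands, (ii) remains unproved in your argument.
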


The following result shows an interesting phenomenon that  a ground state solution
can change sign in some case.

\begin{Theorem}\label{zhdl-2}
Suppose that $\lambda\in[\lambda_1,+\infty)$ and $p=2,$ then for any $\mu>0$ and $q\in(1,5),$ problem \eqref{eq1.1} has a ground state solution. Moreover, this solution is a nodal solution.
\end{Theorem}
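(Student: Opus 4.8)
The plan is to work with the reduced energy functional obtained by solving the linear Poisson problem $-\Delta\phi=|u|^q$, $\phi|_{\partial\Omega}=0$, whose unique solution $\phi_u\geq0$ is given by convolution against the Green function of $\Omega$. Substituting $\phi=\phi_u$ and using $p=2$ yields
\[
I(u)=\frac12\int_\Omega|\nabla u|^2-\frac{\lambda}{2}\int_\Omega u^2-\frac{\mu}{2q}\int_\Omega\phi_u|u|^q,
\]
a $C^1$ even functional on $H_0^1(\Omega)$ whose nontrivial critical points are exactly the nontrivial solutions of \eqref{eq1.1}, and on which every such solution satisfies $I(u)=\tfrac{\mu(q-1)}{2q}\int_\Omega\phi_u|u|^q>0$. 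The decisive structural difference from the case $\lambda<\lambda_1$ treated in Theorem \ref{zhdl-1} is that for $\lambda\geq\lambda_1$ the quadratic form $u\mapsto\int|\nabla u|^2-\lambda\int u^2$ is no longer positive definite, so $I$ is indefinite near the origin and the plain Nehari minimization used there no longer applies; the ground state must be produced as a genuine saddle point. I would set $c=\inf\{I(u):u\in H_0^1(\Omega)\setminus\{0\},\ I'(u)=0\}$ and prove that this infimum is attained and that every minimizer, indeed every nontrivial solution, changes sign.

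The sign-changing property is the clean part and I would settle it first, since it then yields the ``moreover'' clause for free and for the whole range $q\in(1,5)$. Suppose $u$ were a nontrivial solution that does not change sign; replacing $u$ by $-u$ if necessary (the equation is odd) we may take $u\geq0$, and the strong maximum principle applied to $-\Delta u=\lambda u+\mu\phi_u u^{q-1}\geq0$ gives $u>0$ in $\Omega$. Testing the equation against the positive first eigenfunction $\varphi_1$ and using $\int\nabla u\cdot\nabla\varphi_1=\lambda_1\int u\varphi_1$ produces
\[
(\lambda_1-\lambda)\int_\Omega u\varphi_1=\mu\int_\Omega\phi_u u^{q-1}\varphi_1.
\]
Since $\lambda\geq\lambda_1$ and $u,\varphi_1>0$, the left side is $\leq0$ while the right side is $>0$, a contradiction. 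Hence no one-signed nontrivial solution exists, so any ground state is automatically nodal.

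For existence I would first produce one nontrivial solution by a linking argument. Split $H_0^1(\Omega)=H^-\oplus H^+$, where $H^-$ is spanned by the eigenfunctions of $-\Delta$ with eigenvalue $\leq\lambda$ and $H^+$ is its orthogonal complement; on $H^-$ the quadratic part of $I$ is $\leq0$, while on $H^+$ a spectral gap gives $\int|\nabla u|^2-\lambda\int u^2\geq\delta\int|\nabla u|^2$. Because $2q>2$ and $q<5$ is subcritical, the nonlocal term is of higher order near $0$, so $I$ is bounded below by a positive constant on a small sphere $\{u\in H^+:\int|\nabla u|^2=\rho^2\}$, while $I\to-\infty$ on $H^-\oplus\mathbb{R}e$ for any fixed $e\in H^+$; this is exactly the linking geometry, and it produces a Cerami sequence at a level $c_{\mathrm{link}}>0$. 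The key analytic input is that $u\mapsto\phi_u$ is compact (elliptic regularity together with $q<5$), so the nonlocal part of $I'$ is a compact operator; combined with the Fredholm nature of $-\Delta-\lambda$, this shows that bounded Cerami sequences are relatively compact and converge to a nontrivial solution. To pass from one solution to the ground state, I would take a sequence of solutions $u_n$ with $I(u_n)\to c$, establish boundedness and compactness as above, and show the limit $u_0$ is a nonzero solution with $I(u_0)=c$; the sign argument of the previous paragraph then shows $u_0$ is nodal.

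The hard part is the compactness bookkeeping in the indefinite regime, where two points require real work. First, the boundedness of the Cerami (and of the energy-minimizing) sequences: writing $u_n=u_n^-+u_n^+$ and testing $I'(u_n)$ against $u_n^\pm$, one controls $\|u_n^+\|$ through the spectral gap on $H^+$ together with the fact that $\int\phi_{u_n}|u_n|^q$ stays bounded, the latter following from the identity $I(u_n)-\tfrac12 I'(u_n)u_n=\tfrac{\mu(q-1)}{2q}\int\phi_{u_n}|u_n|^q$; the finite-dimensional component $u_n^-$ is then automatically controlled. This step is delicate precisely because $\int|\nabla u|^2-\lambda\int u^2$ is indefinite. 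Second, one must rule out $u_0=0$, i.e.\ establish a uniform lower bound $\int\phi_u|u|^q\geq\delta_0>0$ for every nontrivial solution (equivalently $c>0$), which prevents the minimizing sequence from collapsing to the trivial solution. A further technical nuisance is resonance: when $\lambda$ equals an eigenvalue the quadratic form degenerates on the kernel, and one handles the degenerate directions by exploiting that the nonlocal term is strictly higher order, so that $I$ remains strictly negative along them away from the origin. Once these estimates are in place, the subcriticality $q<5$ delivers the remaining convergences and completes the proof.
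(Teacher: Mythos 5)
Your proposal is correct in outline, but it takes a genuinely different route from the paper. The paper proves Theorem \ref{zhdl-2} via the generalized Nehari manifold of Szulkin--Weth: after decomposing $E=E_+\oplus E_0\oplus E_-$ along the spectrum of $-\Delta-\lambda$ (exactly your spectral splitting), it shows in Lemma \ref{jidazhi} that each $u\in M$ is the \emph{unique global maximum} of $I_{q}$ on $\hat{E}(u)=\mathbb{R}_0^+u\oplus E_0\oplus E_-$, constructs the continuous map $\hat{m}$ and the reduced functional $\Psi$ on the sphere $S^+$ (Lemmas \ref{lemgynehari3}, \ref{lemgynehari4-1}, \ref{lemgynehari4} and Proposition \ref{generanehari}), and then obtains a minimizing P.S.\ sequence in $M$ directly from Ekeland's principle. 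This packages existence and least-energy into one step and yields the minimax characterization of $c_0$ for free; in particular the lower bound $c_0\geq\alpha>0$ is a one-line consequence of the global-maximum inequality $I_{q}(u)\geq I_{q}(su_+)$ (Lemma \ref{lemgynehari2}), an inequality your framework does not have. You instead propose classical linking to produce one nontrivial solution and then minimize $I$ over the set of nontrivial critical points; since every nontrivial critical point lies in $M$, the two ground-state levels coincide, so your two-pass scheme is legitimate and arguably more elementary (no construction of $\hat{m}$, no continuity lemma). Your sign-changing argument is identical to the paper's test against $\varphi_1$, and it correctly covers the whole range $q\in(1,5)$, including the resonant endpoint $\lambda=\lambda_1$ where the strict inequality comes from the nonlocal term alone.

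The one place your sketch is genuinely thin is the resonant kernel in the compactness step. Your claim that, once $\|u_n^+\|$ is controlled, ``the finite-dimensional component $u_n^-$ is then automatically controlled'' is true for the strictly negative part (where the quadratic form is negative definite) but fails on $E_0$ when $\lambda\in\sigma(-\Delta)$: there the quadratic form carries no information, and testing $I'(u_n)$ against the kernel component only gives $\int_\Omega\phi_{u_n}|u_n|^{q-2}u_n\,u_n^0=o(1)$, which does not bound $\|u_n^0\|$. The missing ingredient is the normalization-plus-homogeneity argument of the paper's Lemma \ref{lemgynehari5}: if $\|u_n\|\to\infty$ with $u_n/\|u_n\|$ converging (strongly, by finite dimensionality of $E_0$ and the already-established bounds on the other components) to a nonzero $v\in E_0$, then $\int_\Omega\phi_{u_n}|u_n|^q=\|u_n\|^{2q}\int_\Omega\phi_{v_n}|v_n|^q\to\infty$, contradicting the bound on the nonlocal energy from $I(u_n)-\tfrac12 I'(u_n)u_n$. (For the other components, the identity $\|\phi_{u_n}\|_{H_0^1}^2=\int_\Omega\phi_{u_n}|u_n|^q$ makes your sketch work: the bounded nonlocal energy bounds $\phi_{u_n}$ in $L^6$, and H\"older with $q<5$ controls $\|u_n^\pm\|$ linearly.) The same normalization also settles your second flagged point, $c>0$: if nontrivial solutions $u_n$ had $\int_\Omega\phi_{u_n}|u_n|^q\to0$, the estimates force $(u_n)_\pm\to0$ while the kernel components would converge to some $v_0\in E_0$ with $\int_\Omega\phi_{v_0}|v_0|^q=0$, hence $v_0=0$, contradicting a uniform lower bound on $\|u_n\|$ obtained the same way. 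With these finite-dimensional arguments supplied, your linking-plus-minimization route closes; the paper's Szulkin--Weth reduction buys exactly these estimates in reusable lemma form.
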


As usual, we transform  system \eqref{eq1.1} into a single nonlocal equation of $u$ by using a classical reduction approach. This single nonlocal equation has a  variational structure, so we shall use variational methods to obtain the existence of  ground state solutions of \eqref{eq1.1} with $p\in (2,6)$ via Nehari manifold. To find nodal solutions to \eqref{eq1.1}, we note that existing  methods such as those in  \cite{Bartsch-Weth-Willem,Castro-Cossio-Neuberger} do not apply directly to \eqref{eq1.1}, because they rely on the local features of the equations. Moreover, the arguments used for Schrodinger-Poisson system in \cite{Alves-Souto-nodalsolution,Batkam} are  not applicable for \eqref{eq1.1} neither, because they depend on a special property of Nehari nodal set  which the functional of \eqref{eq1.1} does not have. In order to prove our results, we have to develop a new method, based partially on the ideas of \cite{Ghimenti-Moroz,Ghimenti-Shaftingen-JFA2016,Ye-hongyukirchhoff-choquard} on problems in the entire space. The main difficulty in our method lies in  the fact that the nonlocal term  can not be written explicitly as a  convolution type as in \cite{Ghimenti-Moroz,Ghimenti-Shaftingen-JFA2016,Ye-hongyukirchhoff-choquard}.

When $p=2$,  if $\lambda\in(-\infty,\lambda_1),$  problem \eqref{eq1.1} can be handled as in the case $p\in(2,6).$  However,  if $\lambda\in[\lambda_1,+\infty),$
the energy functional is indefinite  and hence can not be treated as before. In order to obtain our results, we deal with this additional difficulty  by  using the method of generalized Nehari manifold (see \cite{Szulkin-Weth}).

The paper is organized as follows. In section 2, we introduce preliminaries and notations. In section 3, we prove Theorem \ref{prop} by using Nehari method, and in section 4, we shall distinguish three cases to prove Theorem \ref{theoremmain}.

\section{Preliminaries}

We first notet that the arguments of this paper are applicable for all $\mu>0.$  Without loss of generality, we may assume $\mu=1$ throughout the paper.

System \eqref{eq1.1} is variational and the corresponding energy functional $J_q:H_0^1(\Omega)\times H_0^1(\Omega)\to \mathbb{R}$ is given by
\begin{equation*}
J_q(u,\phi)=\frac{1}{2}\int_{\Omega} |\nabla u|^2 +\frac{1}{2q}\int_{\Omega} |\nabla \phi|^2 -\frac{1}{q}\int_{\Omega}|u|^q \phi-\frac{\lambda}{p}\int_{\Omega} |u|^{p}.
\end{equation*}
Recall that for any given $u\in H_0^1(\Omega)$ and $q\in (1,5),$ by Lax-Milgram Theorem, there exists a unique $\phi_u\in H_0^1(\Omega)$ such that
$$-\Delta \phi_u=|u|^q.$$
Then it follows immediately that
$$0\leq \int_{\Omega} |\nabla \phi_u|^2=\int_{\Omega}|u|^q \phi_u.$$
This allows us to define one-variable functional
$I_q: H_0^1(\Omega)\to \mathbb{R}$ by
\begin{equation*}
I_q(u):=J_q(u,\phi_u)=\frac{1}{2}\int_{\Omega} |\nabla u|^2 -\frac{1}{2q}\int_{\Omega}|u|^q \phi_u-\frac{\lambda}{p}\int_{\Omega} |u|^{p}.
\end{equation*}
It is easy to check that $I_q\in C^1(H_0^1(\Omega),\mathbb{R}),$ whose Gateaux derivative
is defined by
$$I_q'(u)v=\int_{\Omega} \nabla u\nabla v -\int_{\Omega}\phi_u |u|^{q-2}uv -\lambda\int_{\Omega} |u|^{p-2}uv$$
for all $v\in H_0^1(\Omega).$
As stated  in \cite{Azzollini-D'Avenia}[Proposition 2.2], any functions pair $(u,\phi)\in H_0^1(\Omega)\times H_0^1(\Omega)$ is a critical point of functional $J_q$ if and only if $u\in H_0^1(\Omega)$ is a critical point of functional $I_q$ with $\phi_u=\phi.$ Therefore, all solutions of problem \eqref{eq1.1} correspond to  critical points of the functional $I_q$ in the weak sense.

Next we collect some properties  of $\phi_u$, which will be used in this paper.
The following proposition can be proved by using similar arguments as in \cite{Ruiz}[Lemma 2.1].
\begin{Proposition}\label{lem1.1}
For any $u\in H_0^1(\Omega),$ the following hold:
\item{(i)} there exists $C>0$ such that $\|\phi_u\|\leq C\|u\|^q$ and $\int_{\Omega} |\nabla \phi_u|^2=\int_{\Omega} \phi_u |u|^q\leq C\|u\|^{2q};$
\item{(ii)} $\phi_u\geq 0$ and $\phi_{tu}=t^q \phi_u$ for any $t>0;$
\item{(iii)} if $u_n\rightharpoonup u$ in $H_0^1(\Omega),$ then
$\phi_{u_n}\rightharpoonup \phi_u$ in $H_0^1(\Omega)$ and $\lim\limits_{n\to +\infty}\int_{\Omega} \phi_{u_n} |u_n|^q=\int_{\Omega} \phi_u |u|^q.$
\end{Proposition}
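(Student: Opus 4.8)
The plan is to work throughout with the weak formulation of the defining equation for $\phi_u$: since $-\Delta\phi_u=|u|^q$ in the sense of Lax--Milgram, $\phi_u$ is characterized by
\[
\int_\Omega \nabla\phi_u\cdot\nabla v=\int_\Omega |u|^q v\qquad\text{for all }v\in H_0^1(\Omega).
\]
For part (i) I would first take $v=\phi_u$, which gives $\int_\Omega|\nabla\phi_u|^2=\int_\Omega|u|^q\phi_u$ (the identity already noted before the statement). I then estimate the right-hand side by H\"older with exponents $6/5$ and $6$, writing $\int_\Omega|u|^q\phi_u\le\||u|^q\|_{L^{6/5}}\|\phi_u\|_{L^6}=\|u\|_{L^{6q/5}}^q\|\phi_u\|_{L^6}$. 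Because $\Omega\subset\mathbb R^3$ has critical Sobolev exponent $2^*=6$ and $6q/5\le 6$ is guaranteed by $q<5$, the embeddings $H_0^1(\Omega)\hookrightarrow L^{6q/5}(\Omega)$ and $H_0^1(\Omega)\hookrightarrow L^6(\Omega)$ yield $\int_\Omega|\nabla\phi_u|^2\le C\|u\|^q\|\phi_u\|$. Dividing by $\|\phi_u\|$ gives $\|\phi_u\|\le C\|u\|^q$, and squaring produces $\int_\Omega|\nabla\phi_u|^2\le C\|u\|^{2q}$, which is (i).

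For part (ii), the sign statement follows from testing the weak formulation with the negative part $v=\phi_u^-:=\min\{\phi_u,0\}\in H_0^1(\Omega)$: the left-hand side becomes $\int_\Omega|\nabla\phi_u^-|^2\ge0$, while the right-hand side $\int_\Omega|u|^q\phi_u^-\le0$, forcing $\|\phi_u^-\|=0$ and hence $\phi_u\ge0$ by Poincar\'e. The homogeneity $\phi_{tu}=t^q\phi_u$ I would obtain by observing that $t^q\phi_u$ solves $-\Delta(t^q\phi_u)=t^q|u|^q=|tu|^q$ in the weak sense, so that uniqueness in the Lax--Milgram construction identifies it with $\phi_{tu}$.

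The substance of the proposition is part (iii). Given $u_n\rightharpoonup u$ in $H_0^1(\Omega)$, the sequence is bounded, so by (i) $\{\phi_{u_n}\}$ is bounded in $H_0^1(\Omega)$ and, along a subsequence, $\phi_{u_n}\rightharpoonup\psi$. I would pass to the limit in $\int_\Omega\nabla\phi_{u_n}\cdot\nabla v=\int_\Omega|u_n|^q v$: the left side tends to $\int_\Omega\nabla\psi\cdot\nabla v$ by weak convergence, and the right side tends to $\int_\Omega|u|^q v$ once I show $|u_n|^q\to|u|^q$ in $L^{6/5}(\Omega)$. Thus $\psi$ weakly solves $-\Delta\psi=|u|^q$, so uniqueness gives $\psi=\phi_u$ and, the limit being independent of the subsequence, the full sequence satisfies $\phi_{u_n}\rightharpoonup\phi_u$. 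For the final limit of $\int_\Omega\phi_{u_n}|u_n|^q$ I would split it as $\int_\Omega\phi_{u_n}(|u_n|^q-|u|^q)+\int_\Omega(\phi_{u_n}-\phi_u)|u|^q+\int_\Omega\phi_u|u|^q$; the first term vanishes by H\"older using boundedness of $\phi_{u_n}$ in $L^6$ together with $|u_n|^q\to|u|^q$ in $L^{6/5}$, and the second vanishes since $\phi_{u_n}\rightharpoonup\phi_u$ weakly in $L^6$ paired against the fixed element $|u|^q\in L^{6/5}=(L^6)'$.

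The hard part, and the only place compactness is genuinely used, is establishing $|u_n|^q\to|u|^q$ in $L^{6/5}(\Omega)$. Here I would invoke the Rellich--Kondrachov theorem: since $6q/5<6$ precisely because $q<5$, weak convergence $u_n\rightharpoonup u$ in $H_0^1(\Omega)$ upgrades to strong convergence $u_n\to u$ in $L^{6q/5}(\Omega)$, and then continuity of the Nemytskii map $w\mapsto|w|^q$ from $L^{6q/5}$ to $L^{6/5}$ --- which for $q\ge1$ follows from the pointwise bound $\big||a|^q-|b|^q\big|\le q\bigl(|a|^{q-1}+|b|^{q-1}\bigr)|a-b|$ and H\"older --- delivers the claimed strong convergence of $|u_n|^q$.
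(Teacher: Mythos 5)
Your proposal is correct and takes essentially the same approach as the paper, which does not write out a proof but delegates it to the standard argument of \cite{Ruiz}[Lemma 2.1]: the Lax--Milgram weak formulation with H\"older and Sobolev embedding for (i), testing with $\phi_u^-$ and uniqueness for (ii), and Rellich--Kondrachov compactness (using $6q/5<6$, i.e.\ $q<5$) together with continuity of the Nemytskii map $w\mapsto|w|^q$ for (iii). Your write-up simply supplies the details the paper leaves to that citation, and all the exponent bookkeeping (e.g.\ $\tfrac{5(q-1)}{6q}+\tfrac{5}{6q}=\tfrac{5}{6}$ in the Nemytskii step) checks out.
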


\section{$p\in(2,6)$: nonlinear local perturbation}
In this section, we are devoted to proving Theorems \ref{prop} and \ref{theoremmain}. Let $p\in (2,6)$ and $\mu=1$. Since the argument of the proofs below are applicable for all $\lambda>0,$ in what follows, we also assume $\lambda=1.$

\subsection{Proof of Theorem \ref{prop}}
We shall use Nehari's method to prove the existence of  ground state solutions for problem \eqref{eq1.1}.
First we consider the  Nehari manifold
$$\mathcal{N}_q:=\left\{u\in H_0^1(\Omega)\backslash\{0\}: \int_{\Omega} |\nabla u|^2=\int_{\Omega} |u|^p +\int_{\Omega}\phi_u|u|^q\right\},$$
which is a natural constraint for the functional $I_q$.
Let $m_{q}$ be  the infimum of $I_q$ over Nehari manifold $\mathcal{N}_q$, that is,
$$
m_{q}:=\inf_{u\in \mathcal{N}_q} I_q(u).
$$

\begin{Lemma}\label{nehariproperty}
The following statements are true:
\item{(i)} there exists $\rho>0$ such that $\|u\|\geq \rho$ for all $u\in \mathcal{N}_q;$
\item{(ii)} $m_{q}>0;$
\item{(iii)} for each $v\in H_0^1(\Omega)\backslash\{0\},$ there exists a unique $s_v>0$ such that $s_v v\in \mathcal{N}_q$ and $$I_q(s_vv)=\sup\limits_{s\geq0}I_q(sv);$$
    Moreover, the map $H_0^1(\Omega)\backslash\{0\}\to (0,+\infty): v\mapsto s_v$ is continuous;
\item{(iv)}  $\inf\limits_{u\in\mathcal{N}_q} I_q(u)=\inf\limits_{v\in H_0^1(\Omega)\backslash\{0\}}\sup\limits_{s\geq0}I_q(sv).$
\end{Lemma}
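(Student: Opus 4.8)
The plan is to exploit the homogeneity of the three building blocks of $I_q$. Writing $\|v\|^2=\int_\Omega|\nabla v|^2$, $B(v)=\int_\Omega|v|^p$ and $D(v)=\int_\Omega\phi_v|v|^q$, Proposition \ref{lem1.1}(ii) gives $\phi_{sv}=s^q\phi_v$, hence $D(sv)=s^{2q}D(v)$, while plainly $B(sv)=s^pB(v)$. Thus along the ray $s\mapsto sv$ the functional becomes the explicit fibering map $g_v(s):=I_q(sv)=\tfrac12 s^2\|v\|^2-\tfrac{1}{2q}s^{2q}D(v)-\tfrac1p s^pB(v)$, and membership $u\in\mathcal{N}_q$ is exactly the condition $I_q'(u)u=0$, i.e. $\|u\|^2=B(u)+D(u)$. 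I would establish the four claims in the stated order, using (i) inside (ii) and (iii) inside (iv).

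For (i), let $u\in\mathcal{N}_q$, so $\|u\|^2=B(u)+D(u)$. The Sobolev embedding $H_0^1(\Omega)\hookrightarrow L^p(\Omega)$ (valid since $p<6=2^*$) gives $B(u)\le C\|u\|^p$, and Proposition \ref{lem1.1}(i) gives $D(u)\le C\|u\|^{2q}$; hence $\|u\|^2\le C\|u\|^p+C\|u\|^{2q}$, and dividing by $\|u\|^2>0$ yields $1\le C\|u\|^{p-2}+C\|u\|^{2q-2}$. As $p>2$ and $2q>2$, the right-hand side tends to $0$ when $\|u\|\to0$, forcing $\|u\|\ge\rho$ for some $\rho>0$. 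For (ii), substituting $\|u\|^2=B(u)+D(u)$ into $I_q$ cancels the gradient term and leaves $I_q(u)=\big(\tfrac12-\tfrac1p\big)B(u)+\big(\tfrac12-\tfrac1{2q}\big)D(u)$; both coefficients are strictly positive because $p>2$ and $q>1$, so with $\gamma:=\min\{\tfrac12-\tfrac1p,\tfrac12-\tfrac1{2q}\}>0$ and $B,D\ge0$ I obtain $I_q(u)\ge\gamma\big(B(u)+D(u)\big)=\gamma\|u\|^2\ge\gamma\rho^2$ by part (i). Passing to the infimum gives $m_q\ge\gamma\rho^2>0$.

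For (iii), I would analyze $g_v$ for fixed $v\neq0$. Since $s_vv\in\mathcal{N}_q$ for $s_v>0$ is equivalent to $g_v'(s_v)=0$, and $g_v'(s)=s\big(\|v\|^2-s^{2q-2}D(v)-s^{p-2}B(v)\big)$, it suffices to solve $\|v\|^2=s^{2q-2}D(v)+s^{p-2}B(v)$. The right-hand side is continuous, vanishes as $s\to0^+$, tends to $+\infty$ as $s\to\infty$, and is strictly increasing on $(0,\infty)$ because $p-2>0$, $2q-2>0$ and $B(v)=\int_\Omega|v|^p>0$; hence there is a unique root $s_v>0$. Since $g_v(0)=0$, $g_v>0$ near $0$, and $g_v(s)\to-\infty$ (the top-order terms carry negative signs and $p,2q>2$), this unique critical point is the global maximum, giving the first assertion. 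Continuity of $v\mapsto s_v$ then follows from the implicit function theorem applied to $h(s,v):=\|v\|^2-s^{2q-2}D(v)-s^{p-2}B(v)$, whose $s$-derivative at $s_v$ is strictly negative; alternatively one argues directly from uniqueness of the root together with the continuity of $v\mapsto(\|v\|,B(v),D(v))$. The main technical care in the whole lemma sits precisely here: it is the strict monotonicity supplied by $p>2$ and $q>1$ that simultaneously yields uniqueness of $s_v$ and the nondegeneracy needed for its continuous dependence, and this is the step I expect to demand the most attention.

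Finally, (iv) is a formal consequence of (iii). For each $v\neq0$, part (iii) gives $\sup_{s\ge0}I_q(sv)=I_q(s_vv)$ with $s_vv\in\mathcal{N}_q$, whence $\inf_{v\neq0}\sup_{s\ge0}I_q(sv)\ge m_q$. Conversely, any $u\in\mathcal{N}_q$ satisfies $s_u=1$ by uniqueness, so $\sup_{s\ge0}I_q(su)=I_q(u)$; taking the infimum over $\mathcal{N}_q$ produces the reverse inequality, and the two quantities coincide.
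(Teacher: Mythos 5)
Your proposal is correct and follows essentially the same route as the paper: the identical Sobolev/Proposition \ref{lem1.1} bound for (i), the same fibering-map monotonicity analysis for the existence and uniqueness of $s_v$ in (iii), and the same two-inequality argument for (iv). The only cosmetic differences are in (ii), where your substitution of the Nehari constraint gives $I_q(u)=\bigl(\tfrac12-\tfrac1p\bigr)B(u)+\bigl(\tfrac12-\tfrac1{2q}\bigr)D(u)$, algebraically equivalent to the paper's computation of $I_q(u)-\tfrac{1}{\theta}I_q'(u)u$ with $\theta=\min\{p,2q\}$, and in the continuity step of (iii), where your implicit-function-theorem argument (valid, since $B(v)>0$ forces $\partial_s h(s_v,v)<0$ and $h$ is $C^1$) replaces the paper's sequential compactness-plus-uniqueness argument, with both yielding the same conclusion.
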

\begin{proof}
For any $u\in \mathcal{N}_q,$ by Sobolev inequality and (i) of Proposition \ref{lem1.1}, we have
\begin{equation*}
\int_{\Omega} |\nabla u|^2=\int_{\Omega} |u|^p + \int_{\Omega} \phi_u |u|^q
\leq C_1 \|u\|^p+ C_2\|u\|^{2q}.
\end{equation*}
Then
\begin{equation}\label{eq3.1}
1\leq C_1 \|u\|^{p-2}+ C_2\|u\|^{2q-2},
\end{equation}
for some $C_1,C_2>0$ independent of $u.$ Since $p>2$ and $2q>2,$
there exists $\rho>0$ such that
$\|u\|\geq \rho$ for all $u\in \mathcal{N}_q.$ Otherwise, there exists a sequence $(u_n)_{n\geq 1}\subset \mathcal{N}_q$ such that $\|u_n\|\to 0$ as $n\to \infty.$ Then the right side of the inequality above tends to $0,$ which is impossible. Hence (i) follows.

Define $\theta:=\min\{p,2q\}>2.$ Since $I_q'(u)u=0$ for any $u\in \mathcal{N}_q$, by (ii) of Proposition \ref{lem1.1} and (i) , we have
\begin{equation*}\begin{array}{rll}
I_q(u)&=I_q(u)-\frac{1}{\theta}I_q'(u)u\\
&=(\frac{1}{2}-\frac{1}{\theta})\int_{\Omega} |\nabla u|^2+(\frac{1}{\theta}-\frac{1}{p})\int_{\Omega} |u|^p +(\frac{1}{\theta}-\frac{1}{2q}) \int_{\Omega} \phi_u |u|^q\\
&\geq (\frac{1}{2}-\frac{1}{\theta})\int_{\Omega} |\nabla u|^2 \\
&\geq (\frac{1}{2}-\frac{1}{\theta})\rho^2 >0.
\end{array}\end{equation*}
So (ii) follows.

For any $v\in H_0^1(\Omega)\backslash\{0\}$ and $s\geq0,$  we have
\begin{equation}\label{eq3.2}
\frac{d}{ds}I_q(sv)=I_q'(sv)v=s(\|v\|^2- s^{p-2}\int_{\Omega}|v|^p -s^{2q-2}\int_{\Omega}\phi_v |v|^q).
\end{equation}
Since $\frac{1}{s}I_q'(sv)v$ is positive for $s>0$ sufficiently small,  negative for $s$ sufficiently large, and is strictly decreasing with respect to $s$ in $(0,+\infty),$  there exists a unique $s_v>0$ such that $I_q'(s_vv)v=0$ and $s_vv\in \mathcal{N}_q.$
Moreover, the map $s\mapsto I_q(sv)$ is increasing for $0\leq s<s_v$ and decreasing for $s\geq s_v.$
Hence $I_q(s_vv)=\sup\limits_{s\geq0}I_q(sv).$

Now, we prove the continuity of the map $v\mapsto s_v.$ First, we assume  $v_n\to v$ in $H_0^1(\Omega)$ as $n\to \infty.$ In view of \eqref{eq3.2},  we have $I_q'(s_{v_n}v_n)v_n=0$
and $I_q'(s v_n)v_n\to -\infty$ uniformly in $n$ as $s\to+\infty.$ It is easy to verify  $(s_{v_n})_{n\geq 1}$ is bounded. Moreover, since $s_{v_n}v_n\in \mathcal{N}_q,$ it follows from (i) that $(s_{v_n})_{n\geq1}$ is bounded away from zero. Then there is a subsequence of $(s_{v_n})_{n\geq1}$ converging to some $s^*\in (0,+\infty).$ By \eqref{eq3.2}, we conclude that  $I_q'(s^*v)v=0$,  and the uniqueness of $s_v$ yields that $s^*=s_v.$  Therefore, $s_{v_n}\to s_v$ and (iii) follows.

In view of (iii), we have $I_q(s_vv)=\sup\limits_{s\geq0}I_q(sv)
\geq\inf\limits_{u\in \mathcal{N}_q}I_q(u).$ Thus
$$\inf\limits_{v\in H_0^1(\Omega)\backslash\{0\}}\sup\limits_{s\geq0}I_q(sv)\geq\inf\limits_{u\in \mathcal{N}_q}I_q(u).$$
On the other hand, for any $u\in\mathcal{N}_q,$
$$I_q(u)=\sup\limits_{s\geq0}I_q(su)\geq\inf\limits_{v\in H_0^1(\Omega)\backslash\{0\}}\sup\limits_{s\geq0}I_q(sv).$$
Therefore, (iv) follows immediately. We complete the proof.
\end{proof}

In spirit of \cite{Willem}[Chapter 4], we have the following lemma.
\begin{Lemma}\label{lemma3.2}
Define
$$m_q^1:=\inf_{u\in H_0^1(\Omega)\backslash\{0\}}\sup_{t\geq 0}I_q(tu),$$
$$m_q^2:=\inf_{\gamma\in\Gamma}\sup_{t\in[0,1]}I_q(\gamma(t)),$$
where
$$\Gamma=\{\gamma\in C([0,1],H_0^1(\Omega)): \gamma(0)=0, I_q(\gamma(1))<0\}.$$
Then
\begin{equation*}\label{mpstruc}
m_q=m_q^1=m_q^2.
\end{equation*}
\end{Lemma}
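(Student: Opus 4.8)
The plan is to establish the chain of equalities $m_q = m_q^1 = m_q^2$ by proving two of them directly and invoking the previous lemma for the third. The equality $m_q = m_q^1$ is already contained in Lemma \ref{nehariproperty}(iv), since $m_q^1$ is precisely $\inf_{v \neq 0}\sup_{s \geq 0} I_q(sv)$, so nothing new is needed there. Thus the real work is to show $m_q^1 = m_q^2$, comparing the mountain-pass level with the Nehari-characterized level.

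\medskip

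First I would prove $m_q^2 \leq m_q^1$. Given any $u \in H_0^1(\Omega)\setminus\{0\}$, I would produce an admissible path $\gamma \in \Gamma$ whose range lies along the ray $\{tu : t \geq 0\}$. Concretely, since Lemma \ref{nehariproperty}(iii) gives that $s \mapsto I_q(su)$ increases up to $s_u$ and decreases afterward, and since $\frac{1}{s}I_q'(su)u \to -\infty$ as $s \to +\infty$ (from \eqref{eq3.2}, using $p > 2$ and $2q > 2$), one has $I_q(su) \to -\infty$; hence some $T > s_u$ satisfies $I_q(Tu) < 0$. The reparametrized path $\gamma(t) = tTu$ then lies in $\Gamma$, and $\sup_{t \in [0,1]} I_q(\gamma(t)) = \sup_{s \in [0,T]} I_q(su) = I_q(s_u u) = \sup_{s \geq 0} I_q(su)$. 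Taking the infimum over $u$ yields $m_q^2 \leq m_q^1$.

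\medskip

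Next I would prove the reverse inequality $m_q^1 \leq m_q^2$, which is where I expect the only real obstacle to lie. The key point is that every admissible path must cross the Nehari manifold $\mathcal{N}_q$. Given $\gamma \in \Gamma$, I would show there exists $t_* \in [0,1]$ with $\gamma(t_*) \in \mathcal{N}_q$; then $\sup_{t \in [0,1]} I_q(\gamma(t)) \geq I_q(\gamma(t_*)) \geq m_q$, and passing to the infimum over $\gamma$ gives $m_q^2 \geq m_q = m_q^1$. To locate the crossing point, consider the function $g(t) := \|\nabla \gamma(t)\|_{L^2}^2 - \int_\Omega |\gamma(t)|^p - \int_\Omega \phi_{\gamma(t)}|\gamma(t)|^q$, which is continuous in $t$ by the continuity of $\gamma$ and Proposition \ref{lem1.1}(iii). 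Near $t = 0$ one has $g(t) > 0$: indeed for small $\|u\|$ the estimate $\int_\Omega |u|^p + \int_\Omega \phi_u|u|^q \leq C_1\|u\|^p + C_2\|u\|^{2q} = o(\|u\|^2)$ forces $g > 0$, and $\gamma(0) = 0$ together with continuity makes $\|\gamma(t)\|$ small for $t$ near $0$. At $t = 1$, since $I_q(\gamma(1)) < 0$ while on $\mathcal{N}_q$ the functional is strictly positive, $\gamma(1) \notin \mathcal{N}_q$; in fact $g(1) < 0$, because if $g(1) \geq 0$ then the supremum $\sup_{s\geq 0} I_q(s\gamma(1))$ is attained at some $s_{\gamma(1)} \geq 1$, forcing $I_q(\gamma(1)) \geq 0$, a contradiction. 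By the intermediate value theorem there is $t_* \in (0,1)$ with $g(t_*) = 0$, i.e. $\gamma(t_*) \in \mathcal{N}_q$ (note $\gamma(t_*) \neq 0$ since $g > 0$ in a neighborhood of $0$). This completes the argument; the main subtlety is verifying the correct signs of $g$ at the two endpoints and ruling out the trivial crossing at the origin.
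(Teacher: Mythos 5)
Your proof is correct and follows essentially the same route as the paper: $m_q=m_q^1$ is quoted from Lemma \ref{nehariproperty}(iv), the inequality $m_q^2\leq m_q^1$ comes from a ray path $t\mapsto tTu$ with $I_q(Tu)<0$, and $m_q\leq m_q^2$ from the fact that every $\gamma\in\Gamma$ must cross $\mathcal{N}_q$, detected by the sign change of $g(t)=I_q'(\gamma(t))\gamma(t)$ between a small ball around $0$ and the endpoint $\gamma(1)$. The only cosmetic difference is how $g(1)<0$ is verified: the paper uses the identity $I_q'(u)u=2I_q(u)+\left(\tfrac{2}{p}-1\right)\int_\Omega|u|^p+\left(\tfrac{1}{q}-1\right)\int_\Omega\phi_u|u|^q$ directly, while you derive it from the fibering-map monotonicity of Lemma \ref{nehariproperty}(iii); both are valid, and your treatment of the endpoint signs is, if anything, slightly more careful than the paper's.
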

\begin{proof}
By (iv) of Lemma \ref{nehariproperty}, we have $m_q=m_q^1.$ For any $u\in H_0^1(\Omega)\backslash\{0\},$
there exists $t_u>0$ large enough such that $I_q(t_uu)<0$. Define $\gamma_u\in C([0,1],H_0^1(\Omega))$
by $$\gamma_u(s)=st_uu,\ s\in [0,1].$$
Then $\gamma_u\in \Gamma$ and $\max\limits_{s\in[0,1]}I_q(\gamma_u(s))\leq \sup\limits_{t\geq0}I_q(tu).$ Hence
$$m_q^2\leq\inf_{u\in H_0^1(\Omega)\backslash\{0\}}\sup_{t\geq 0}I_q(tu)=m_q^1.$$
On the other hand, set $\mathcal{B}_r=\{u\in H_0^1(\Omega):\|u\|\leq r, \ r>0 \}.$
It is easy to see that $I'_q(u)u\geq0$ for any $u\in \mathcal{B}_r$ with $r$ small enough. In addition,
for each $\gamma\in \Gamma,$ we have
$$I_q'(\gamma(1))\gamma(1)=2I_q(\gamma(1))+
(\frac{2}{p}-1)\int_\Omega |\gamma(1)|^p+(\frac{1}{q}-1)\int_\Omega\phi_{\gamma(1)} |\gamma(1)|^q<0 $$
since $ p\in (2,6)$ and $ q\in (1,5)$.
This implies that every $\gamma\in \Gamma$ must cross $\mathcal{N}_q$ and thus $m_q\leq m^2_q.$
We complete the proof.
\end{proof}

\noindent\textbf{Proof of Theorem \ref{prop}:}
Set $M=[0,1],$ $M_0=\{0,1\}$ and
$$\Gamma_0=\{\gamma_0:\{0,1\}\to H^1_0(\Omega)|\gamma_0(0)=0,\ I_q(\gamma_0(1))<0\}.$$
By Lemma \ref{lemma3.2}, we have
$$m_q^2:=\inf_{\gamma\in\Gamma}\sup_{t\in[0,1]}I_q(\gamma(t))=m_q>0=
\sup\limits_{\gamma_0\in \Gamma_0}\sup\limits_{t\in M_0}I_q(\gamma_0(t)).$$
 By the minimax principle (see \cite{Willem}[Theorem 2.8]), there exists a $(PS)_{m_{q}}$ sequence $(u_n)_{n\geq1}$ of $I_q$ such that
$$I_q(u_n)\to m_q,\ I_q'(u_n)\to 0\ \mbox{in} \ H^{-1}(\Omega)$$
as $n\to\infty.$ Then for $n$ large enough, we have
\begin{equation}\label{eq4.1}\begin{array}{rll}
m_q+1+\|u_n\| &\geq& I(u_n)-\frac{1}{\theta}I'(u_n)u_n\\
&\geq& (\frac{1}{2}-\frac{1}{\theta})\|u_n\|^2+(\frac{1}{\theta}-\frac{1}{p})\int_{\Omega} |u_n|^p +(\frac{1}{\theta}-\frac{1}{2q})\int_{\Omega} \phi_u |u|^q\\
&\geq& (\frac{1}{2}-\frac{1}{\theta})\|u_n\|^2.
\end{array}\end{equation}
Since $\theta:=\min\{p,2q\}>2,$  we deduce from \eqref{eq4.1} that $\|u_n\|$ is uniformly bounded. Up to a subsequence, there is $u^*\in H_0^1(\Omega)$ satisfying
\begin{equation*}
\begin{array}{rll}
u_n&\rightharpoonup u^*\ &\mbox{in}\ H_0^1(\Omega),\\
u_n&\to u^*\ &\mbox{in}\ L^s(\Omega)\ \mbox{ for any}\  s\in[1,6),\\
u_n(x)&\to u^*(x)\ &\mbox{a.e. in}\ \Omega.
\end{array}
\end{equation*}
This, combined with (iii) of Proposition \ref{lem1.1}, implies that $I_q'(u^*)=0.$
 Note that $I_q'(u_n)u_n\to 0$ as $n\to\infty$ and $I_q'(u^*)u^*=0.$ Then by (iii) of Proposition \ref{lem1.1}
 again, we deduce
 $$\int_{\Omega}|\nabla u_n|^2\to\int_{\Omega}|\nabla u^*|^2,\ n\to\infty.$$
 Since $u_n\rightharpoonup u^*$ in $H^1_0(\Omega),$ we have
$u_n\to u^*$ in $H_0^1(\Omega)$ and hence $\lim\limits_{n\to\infty}I_q(u_n)=I_q(u^*)=m_q.$ Therefore, $u^*$ is a ground state of problem \eqref{eq1.1}.

If $u$ is a ground state solution of  \eqref{eq1.1}, $|u|$ is also a ground state of \eqref{eq1.1}. By the elliptic regularity argument as in \cite{Moroz-Schaftingen}, we have $|u|\in C^2(\Omega).$ Applying the strong maximum principle to \eqref{eq1.1}, we have either $|u|>0$ or $|u|= 0.$ Since $u\neq 0,$ we conclude that either $u>0$ or $u<0.$ The proof is complete. \hfill{\QEDopen}

\subsection{Proof of Theorem \ref{theoremmain}}
In this section, our aim is to find least energy nodal solutions of \eqref{eq1.1}. First, we introduce a useful lemma which can be viewed as a variant of \cite{Lieb-Loss}[Theorem 9.8].
\begin{Lemma}\label{eq4.2}
For any $f,g\in C(\Omega),$ it holds
\begin{equation}\label{eql4.2}
\int_{\Omega} g \psi_f =\int_{\Omega} f \psi_g\leq \left(\int_{\Omega} f \psi_f \int_{\Omega} g \psi_g\right)^{1/2},
\end{equation}
where $\psi_f$ and $\psi_g$ denote the solutions of
\begin{equation}\label{eq4.3}
\left\{\begin{array}{lll}
-\Delta \psi_f&=f\ &\mbox{in}\ \Omega,\\
 \psi_f&=0\ &\mbox{on}\ \partial\Omega,
\end{array}\right.
 \end{equation}
 and
 \begin{equation}\label{eq4.4}
\left\{\begin{array}{lll}
-\Delta \psi_g&=g\ &\mbox{in}\ \Omega,\\
 \psi_g&=0\ &\mbox{on}\ \partial\Omega,
\end{array}\right.
 \end{equation}
respectively.
\end{Lemma}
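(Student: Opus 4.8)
The plan is to prove the two assertions of Lemma~\ref{eq4.2} separately: first the symmetry identity $\int_\Omega g\,\psi_f = \int_\Omega f\,\psi_g$, and then the Cauchy--Schwarz-type inequality. Both are manifestations of the fact that the Green operator $G := (-\Delta)^{-1}$ on $H_0^1(\Omega)$ (with zero boundary data) is a symmetric, positive semidefinite operator, and the quantity $\langle f, g\rangle_G := \int_\Omega g\,\psi_f$ is exactly the bilinear form associated to $G$. The whole statement is then just symmetry plus positivity of this form, dressed up in PDE notation.

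For the symmetry identity, I would test the equation $-\Delta\psi_f = f$ against $\psi_g$ and the equation $-\Delta\psi_g = g$ against $\psi_f$. Since both $\psi_f,\psi_g\in H_0^1(\Omega)$, integration by parts produces no boundary terms, and both tests yield the common value $\int_\Omega \nabla\psi_f\cdot\nabla\psi_g$. Concretely, testing \eqref{eq4.3} with $\psi_g$ gives $\int_\Omega g\,\psi_f = \int_\Omega \nabla\psi_f\cdot\nabla\psi_g$ after using $-\Delta\psi_g = g$ to rewrite, while testing \eqref{eq4.4} with $\psi_f$ gives $\int_\Omega f\,\psi_g = \int_\Omega \nabla\psi_g\cdot\nabla\psi_f$; the two right-hand sides coincide, establishing the equality. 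This step is routine and carries no real obstacle.

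For the inequality, the key observation is that the symmetric bilinear form $B(f,g) := \int_\Omega \nabla\psi_f\cdot\nabla\psi_g$ is positive semidefinite, since $B(f,f) = \int_\Omega |\nabla\psi_f|^2 \geq 0$, and moreover $B(f,f) = \int_\Omega f\,\psi_f$ by the same integration by parts with $g$ replaced by $f$. The desired inequality $\int_\Omega g\,\psi_f \leq \big(\int_\Omega f\,\psi_f \int_\Omega g\,\psi_g\big)^{1/2}$ is then exactly the Cauchy--Schwarz inequality $B(f,g)\leq B(f,f)^{1/2}B(g,g)^{1/2}$ for this semidefinite form. I would prove it directly by expanding $B(f - tg, f - tg)\geq 0$ for $t\in\mathbb{R}$ and optimizing over $t$, which is the standard derivation of Cauchy--Schwarz from nonnegativity of a quadratic form; alternatively one can invoke the continuous $L^2$ inner product structure induced by $\nabla\psi_f\cdot\nabla\psi_g$.

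The only mild subtlety — and the closest thing to an obstacle — is justifying the integration by parts and the absence of boundary terms for the regularity class at hand: here $f,g\in C(\overline\Omega)$, so by standard elliptic theory $\psi_f,\psi_g$ are weak solutions in $H_0^1(\Omega)$ and the weak formulation $\int_\Omega \nabla\psi_f\cdot\nabla v = \int_\Omega f\,v$ holds for all $v\in H_0^1(\Omega)$, which is all that is needed. Since $\psi_f$ and $\psi_g$ themselves serve as the admissible test functions, every manipulation above stays within the weak formulation and no extra regularity beyond $H_0^1$ is required. I would conclude by noting that the whole lemma is just the statement that $f\mapsto \int_\Omega f\,\psi_f$ is the quadratic form of a symmetric positive operator, so both the polarization identity and Cauchy--Schwarz apply verbatim.
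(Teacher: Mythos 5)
Your proposal is correct and follows essentially the same route as the paper: the symmetry identity is obtained by testing each equation against the other solution (both sides equaling $\int_\Omega \nabla\psi_f\cdot\nabla\psi_g$), and the inequality comes from expanding the nonnegative quadratic form at $f-\mu g$ and optimizing over $\mu$, which is exactly the paper's choice $\mu=\int_\Omega g\,\psi_f\big/\int_\Omega g\,\psi_g$. Your abstract packaging via the Green operator and your remark on the weak formulation for $f,g$ merely continuous are sound refinements of the same argument, not a different proof.
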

\begin{proof}
Clearly, when  $g=0,$ \eqref{eql4.2} is valid. Without loss of generality, we assume $g\neq 0.$
Multiplying \eqref{eq4.3} and \eqref{eq4.4} by
$\psi_g$ and $\psi_f$, respectively,
 and integrating by parts, we have
\begin{equation}\label{eq4.5}\begin{array}{lll}
\int_{\Omega} g \psi_f=\int_{\Omega} \psi_f (-\Delta \psi_g)
=\int_{\Omega} (-\Delta\psi_f)  \psi_g
=\int_{\Omega} f\psi_g.
\end{array}\end{equation}
Moreover, multiplying \eqref{eq4.4} by
$\psi_g,$ we have
\begin{equation}\label{eq4.6}
\int_{\Omega} g \psi_g =\int_{\Omega} |\nabla \psi_g|^2\geq 0.
\end{equation}
Hence for any $\mu\in \mathbb{R},$ if we replace $g$ by $f-\mu g$ in \eqref{eq4.6}, we obtain
\begin{equation*}
\int_{\Omega} (f-\mu g)\psi_{f-\mu g}\geq 0.
\end{equation*}
Then by \eqref{eq4.5} and a direct computation,
\begin{equation}\label{eq4.7}
0\leq \int_{\Omega} (f-\mu g)\psi_{f-\mu g}=\int_{\Omega} f\psi_f-2\mu\int_{\Omega} g\psi_f+\mu^2\int_{\Omega} g\psi_g.
\end{equation}
Since $g\neq0,$ we have $\int_{\Omega} g \psi_g\neq0.$ Thus, taking  $\mu=\frac{\int_{\Omega} g \psi_f}{\int_{\Omega} g \psi_g}$  we deduce \eqref{eql4.2} from \eqref{eq4.7}.
\end{proof}
\begin{Remark}\label{remarko} Since $C_c^\infty(\Omega)$ is dense in $H_0^1(\Omega),$ by Lemma \ref{eq4.2}, for  any $u\in H_0^1(\Omega)$ we have
\begin{equation}\label{eq4.8}
E\leq \sqrt{E_1 E_2}\leq \frac{E_1+E_2}{2},
\end{equation}
where
\begin{equation}\label{eq4.9}
E_1:=\int_{\Omega} \phi_{u^+}|u^+|^q,\quad E_2:=\int_{\Omega} \phi_{u^-}|u^-|^q,
\quad E:=\int_{\Omega} \phi_{u^-}|u^+|^q=\int_{\Omega} \phi_{u^+}|u^-|^q.
\end{equation}
\end{Remark}

Now, we define the Nehari nodal set
\begin{equation*}
\mathcal{N}_{nod,q}:=\left\{u\in H_0^1(\Omega): I_q'(u)u^+=I_q'(u)u^-=0\ \mbox{and}\ u^{\pm}\neq 0\right\},
\end{equation*}
where  $u^+(x):=\max\{u(x),0\}\geq0$, $u^-(x):=\min\{u(x),0\}\leq0$ and
$$I_q'(u)u^{\pm}=\int_{\Omega} |\nabla u^{\pm}|^2-\int_{\Omega} |u^{\pm}|^p-\int_{\Omega} \phi_u |u^{\pm}|^q.$$
Let $m_{nod,q}$ be the infimum of $I_q$ over $\mathcal{N}_{nod,q},$ that is,
\begin{equation}\label{eq4.10}
m_{nod,q}:=\inf_{u\in \mathcal{N}_{nod,q}}I_q(u).
\end{equation}

\begin{Lemma}\label{lembddbelow} The following statements are true:
\item{(i)} $\|u\|\geq \rho$ for any $u\in \mathcal{N}_{nod,q},$ where $\rho$ is given in Lemma \ref{nehariproperty}.
\item{(ii)} $m_{nod,q}>0;$
\item{(iii)} Let $q\in(2,5)$ and $(u_n)_{n\geq1}\subset\mathcal{N}_{nod,q}$ be a bounded sequence of $I_q.$ Then
$$\liminf_{n\to \infty} \|u_n^{\pm}\|>0.$$
\end{Lemma}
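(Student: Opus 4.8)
The plan rests on the key observation that $\mathcal{N}_{nod,q}\subset\mathcal{N}_q$. Indeed, for $u\in\mathcal{N}_{nod,q}$ the two identities $I_q'(u)u^+=I_q'(u)u^-=0$ add up to $I_q'(u)u=0$, and since $u^\pm\neq0$ we have $u\neq0$; hence $u\in\mathcal{N}_q$. A second ingredient I would record at the outset is the decomposition of the nonlocal potential: because $u^+$ and $u^-$ have disjoint supports, $|u|^q=|u^+|^q+|u^-|^q$ pointwise, so by linearity of $-\Delta^{-1}$ and uniqueness $\phi_u=\phi_{u^+}+\phi_{u^-}$. Consequently $\int_\Omega\phi_u|u^+|^q=E_1+E$ and $\int_\Omega\phi_u|u^-|^q=E_2+E$ in the notation of Remark \ref{remarko}, and the two defining equations of $\mathcal{N}_{nod,q}$ read
$$\|u^+\|^2=\int_\Omega|u^+|^p+E_1+E,\qquad \|u^-\|^2=\int_\Omega|u^-|^p+E_2+E.$$

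Given the inclusion, statements (i) and (ii) are immediate. For (i), any $u\in\mathcal{N}_{nod,q}$ lies in $\mathcal{N}_q$, so $\|u\|\geq\rho$ by Lemma \ref{nehariproperty}(i). For (ii), since $\mathcal{N}_{nod,q}\subset\mathcal{N}_q$, taking the infimum over the smaller set yields $m_{nod,q}\geq m_q>0$ by Lemma \ref{nehariproperty}(ii); alternatively one repeats verbatim the computation $I_q(u)=I_q(u)-\frac1\theta I_q'(u)u\geq(\frac12-\frac1\theta)\rho^2$.

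The substance of the lemma is (iii), which I would prove by contradiction. Suppose, along a subsequence, $\|u_n^+\|\to0$ (the case of $u_n^-$ is symmetric); let $E_1^n,E_2^n,E^n$ denote the quantities of Remark \ref{remarko} formed from $u_n$. Dividing the first Nehari equation above by $\|u_n^+\|^2$ gives
$$1=\frac{\int_\Omega|u_n^+|^p}{\|u_n^+\|^2}+\frac{E_1^n}{\|u_n^+\|^2}+\frac{E^n}{\|u_n^+\|^2}.$$
The first term is $\leq C_1\|u_n^+\|^{p-2}$ by the Sobolev embedding, and $E_1^n\leq C\|u_n^+\|^{2q}$ by Proposition \ref{lem1.1}(i); both tend to $0$ since $p>2$ and $2q>2$. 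The crux is the cross term $E^n=\int_\Omega\phi_{u_n^-}|u_n^+|^q$, the only place where the two parts interact. Here I invoke the Cauchy--Schwarz inequality of Remark \ref{remarko}: $E^n\leq\sqrt{E_1^n E_2^n}$. Since $(u_n)$ is bounded in $H_0^1(\Omega)$, $E_2^n\leq C\|u_n^-\|^{2q}$ is bounded, while $E_1^n\leq C\|u_n^+\|^{2q}$, so $E^n\leq C''\|u_n^+\|^{q}$ and hence $E^n/\|u_n^+\|^2\leq C''\|u_n^+\|^{q-2}\to0$ because $q>2$. The right-hand side therefore tends to $0$, contradicting the value $1$ on the left. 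Thus $\liminf\|u_n^+\|>0$, and symmetrically for $u_n^-$.

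I expect the cross term $E^n$ to be the only genuine obstacle: it couples $u_n^+$ and $u_n^-$ and cannot be bounded by $u_n^+$ alone without Lemma \ref{eq4.2}. This is also precisely where the hypothesis $q>2$ (rather than $q\geq2$) is used, since the estimate $E^n/\|u_n^+\|^2\leq C''\|u_n^+\|^{q-2}$ degenerates to a nonvanishing constant at $q=2$, and the argument breaks down there.
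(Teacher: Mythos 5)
Your proof is correct, and for (i)--(ii) it coincides with the paper's: both rest on the inclusion $\mathcal{N}_{nod,q}\subset\mathcal{N}_q$ together with Lemma \ref{nehariproperty}. For (iii) your skeleton is also the paper's --- normalize the Nehari identity for $u_n^{\pm}$ by $\|u_n^{\pm}\|^2$ and force every term to vanish using $p,q>2$ and the boundedness of $(u_n)_{n\geq1}$ --- but you treat the nonlocal term differently: you split $\phi_{u_n}=\phi_{u_n^+}+\phi_{u_n^-}$ and control the cross term $E^n$ via the Cauchy--Schwarz inequality of Remark \ref{remarko}, whereas the paper estimates the whole term in one stroke, $\int_{\Omega}\phi_{u_n}|u_n^{\pm}|^q\leq C_2\|u_n\|^q\|u_n^{\pm}\|^q$, by H\"older (with $\|\phi_{u_n}\|\leq C\|u_n\|^q$ from Proposition \ref{lem1.1}(i) and $|u_n^{\pm}|^q\in L^{6/5}(\Omega)$ by Sobolev, using $q<5$), which yields the same decisive decay $C_3\|u_n^{\pm}\|^{q-2}\to0$. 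This shows your closing assertion --- that the cross term ``cannot be bounded by $u_n^+$ alone without Lemma \ref{eq4.2}'' --- is not accurate: the same H\"older--Sobolev argument gives $E^n\leq C\|u_n^-\|^q\|u_n^+\|^q$ directly, with no symmetrization needed; Lemma \ref{eq4.2} is genuinely indispensable elsewhere in the paper (the positivity of the Hessian determinants in \eqref{eq4.22} and \eqref{eq4.29}), not here. So your route is valid but invokes slightly more machinery than necessary. Your final observation that the estimate degenerates exactly at $q=2$, which is why the lemma is stated for $q\in(2,5)$ and the case $q=2$ requires the separate limiting argument of Case 2, matches the paper's structure precisely.
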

\begin{proof}
Since $\mathcal{N}_{nod,q}\subset \mathcal{N}_q,$ (i) and (ii) follow from Lemma \ref{nehariproperty} immediately.

We finally prove  (iii).

By Sobolev inequality, (i) of Proposition \ref{lem1.1} and boundness of $(u_n)_{n\geq 1}$, we have
$$\|u_n^{\pm}\|^2=\int_{\Omega} |u_n^{\pm}|^p +\int_{\Omega} \phi_{u_n} |u_n^{\pm}|^q\leq C_1\|u_n^{\pm}\|^p+ C_2\|u_n\|^q\|u_n^{\pm}\|^q
\leq C_1\|u_n^{\pm}\|^p+ C_3\|u_n^{\pm}\|^q$$
for some constant $C_1, C_2,C_3>0$ independent of $n.$
Then
\begin{equation}\label{eq4.11}
1\leq C_1\|u_n^{\pm}\|^{p-2}+ C_3\|u_n^{\pm}\|^{q-2}.
\end{equation}
Since $p,q>2,$ it is easy to see that
$$\liminf_{n\to \infty} \|u_n^{\pm}\|>0.$$
Otherwise, there exists a subsequence $(u_{n_k})_{k\geq1}$ such that the right side of inequality \eqref{eq4.11} tends to 0 as $k\to \infty$, which is impossible. Therefore, (iii) holds.
\end{proof}

Moreover, we have the following lemma.
\begin{Lemma}\label{eq4.12}
Let $q\in (2,5).$ For each $u\in H_0^1(\Omega)$ with $u^{\pm}\neq 0,$ there exist $t_u,s_u>0$ such that
$$t_u u^+ +s_u u^-\in \mathcal{N}_{nod,q},$$
and the map $H^1_0(\Omega)\to (0,+\infty)^2:u\mapsto (t_u,s_u)$ is continuous.
\end{Lemma}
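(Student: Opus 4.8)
The plan is to reduce the membership $t u^+ + s u^- \in \mathcal{N}_{nod,q}$ to finding a critical point of the two-variable function
$$\Psi(t,s) := I_q(t u^+ + s u^-), \qquad (t,s)\in (0,\infty)^2 .$$
Since $u^+$ and $u^-$ have disjoint supports, $|t u^+ + s u^-|^q = t^q|u^+|^q + s^q|u^-|^q$, so by the uniqueness and linearity underlying Proposition \ref{lem1.1} the potential splits as $\phi_{t u^+ + s u^-} = t^q \phi_{u^+} + s^q \phi_{u^-}$. Writing $A_\pm := \int_\Omega |u^\pm|^p$ and using the quantities $E_1,E_2,E$ of Remark \ref{remarko}, a direct computation gives
$$\Psi(t,s)=\tfrac12\big(t^2\|u^+\|^2+s^2\|u^-\|^2\big)-\tfrac{1}{2q}\big(t^{2q}E_1+2t^qs^qE+s^{2q}E_2\big)-\tfrac1p\big(t^pA_++s^pA_-\big).$$
The key observation is that $\partial_t\Psi=\tfrac1t I_q'(t u^++s u^-)(t u^+)$ and $\partial_s\Psi=\tfrac1s I_q'(t u^++s u^-)(s u^-)$, so an interior critical point of $\Psi$ is exactly a pair $(t_u,s_u)$ with $t_u u^++s_u u^-\in\mathcal{N}_{nod,q}$. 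Thus the lemma is equivalent to the statement that $\Psi$ has a \emph{unique} interior critical point, depending continuously on $u$.

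For existence I would show $\Psi$ attains its supremum over $[0,\infty)^2$ in the interior. Discarding the nonpositive nonlocal terms yields $\Psi(t,s)\le\big(\tfrac12 t^2\|u^+\|^2-\tfrac1p t^pA_+\big)+\big(\tfrac12 s^2\|u^-\|^2-\tfrac1p s^pA_-\big)$, and since $p>2$ each bracket tends to $-\infty$; hence $\Psi\to-\infty$ as $t+s\to\infty$ and the supremum is attained on a compact set. Choosing $t=s$ small shows $\sup\Psi>0$, so the maximizer is not the origin. To exclude the coordinate axes, suppose the maximum were attained at $(0,s_0)$ with $s_0>0$; because $q>2$ one has $q-1>1$, $p-1>1$, $2q-1>1$, whence $\partial_t\Psi(t,s_0)=t\|u^+\|^2\big(1+o(1)\big)>0$ for small $t>0$, contradicting maximality. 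This is exactly where $q\ge 2$ is used: for $q<2$ the cross term $t^{q-1}s_0^qE$ would dominate and force the maximum onto the boundary, in line with Theorem \ref{theoremmain}(ii). Hence the maximizer lies in the interior and provides the desired $(t_u,s_u)$.

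The main obstacle is uniqueness, since the cross term $E$ couples the two equations and prevents the decoupling available in the local case. At an interior critical point, substituting the critical-point relations into the second derivatives gives $\Psi_{tt}=-\big[(p-2)t^{p-2}A_++(2q-2)t^{2q-2}E_1+(q-2)t^{q-2}s^qE\big]<0$, symmetrically $\Psi_{ss}<0$, and $\Psi_{ts}=-q\,t^{q-1}s^{q-1}E$. Bounding $\Psi_{tt},\Psi_{ss}$ below by their $E_1$- and $E_2$-terms and invoking the sharp inequality $E^2\le E_1E_2$ from Remark \ref{remarko}, I would obtain $\Psi_{tt}\Psi_{ss}-\Psi_{ts}^2\ge\big[(2q-2)^2-q^2\big]t^{2q-2}s^{2q-2}E_1E_2=(q-2)(3q-2)\,t^{2q-2}s^{2q-2}E_1E_2>0$, using $q>2$ and $E_1,E_2>0$. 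Thus every critical point of $\Psi$ is a nondegenerate local maximum, that is, a sink of the gradient field of index $+1$. The critical set is contained in a compact subset of $(0,\infty)^2$ (the power-counting above bounds $t,s$ from above, and coupling the two relations bounds them from below), so I would enclose it in a box on whose boundary $\nabla\Psi$ points inward — on the far sides since $\partial_t\Psi,\partial_s\Psi<0$ there, and on the near sides since $\partial_t\Psi,\partial_s\Psi>0$ for small $t,s$. Poincar\'e--Hopf (equivalently a Brouwer-degree count) then forces the number of critical points, each of index $+1$, to equal the Euler characteristic $1$ of the box, which gives uniqueness.

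Finally, for continuity let $u_n\to u$ in $H_0^1(\Omega)$ with $u_n^\pm\neq0$. All coefficients converge: $\|u_n^\pm\|^2\to\|u^\pm\|^2$ and $\int_\Omega|u_n^\pm|^p\to\int_\Omega|u^\pm|^p$ by Sobolev compactness, while the quantities $E_1,E_2,E$ converge by Proposition \ref{lem1.1}(iii). The uniform upper and lower bounds from the previous step keep $(t_{u_n},s_{u_n})$ in a fixed compact subset of $(0,\infty)^2$; any subsequential limit satisfies the limiting critical-point system for $u$ and hence, by the uniqueness just established, equals $(t_u,s_u)$. As every subsequence admits a further subsequence with this same limit, $(t_{u_n},s_{u_n})\to(t_u,s_u)$, which is the asserted continuity.
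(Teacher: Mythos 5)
Your proof is correct, and it reaches the conclusion by a genuinely different mechanism than the paper, so it is worth comparing the two. For existence, the paper does not maximize $F^u(t,s):=I_q(tu^++su^-)$ over the closed quadrant; it constructs the box $[r_0,R(r_0)]^2$ via the auxiliary function $R(r)$, verifies the boundary sign conditions \eqref{eq4.16}--\eqref{eq4.17} for $J_1^u,J_2^u$, and invokes the Poincar\'e--Miranda theorem --- the same $q>2$ power counting that you use to push a global maximizer off the coordinate axes, just packaged as boundary sign conditions rather than as coercivity plus boundary exclusion. The substantive divergence is in the continuity step: the paper never proves (and pointedly avoids) uniqueness of $(t_u,s_u)$. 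It shows only that the constructed point is a nondegenerate local maximum (your Hessian computation is the paper's \eqref{eq4.19}--\eqref{eq4.22}, with slightly different bookkeeping: you discard the cross terms of $\Psi_{tt},\Psi_{ss}$ and apply $E^2\le E_1E_2$ to $\Psi_{ts}^2$, obtaining the constant $(q-2)(3q-2)E_1E_2$, while the paper keeps the $E$-terms and uses AM--GM to get $8(q-1)(q-2)E^2$; both are valid for $q>2$), and then derives continuity from stability of the \emph{local} Brouwer degree $\deg(V^{u_n},(0,0),\mathcal{B}_\epsilon(t_u,s_u))=1$ under uniform convergence $F^{u_n}\to F^u$, shrinking $\epsilon$. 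You instead upgrade the pointwise Hessian information to \emph{global} uniqueness by a Poincar\'e--Hopf/total-degree count (every interior critical point has index $+1$, the gradient points inward on a suitable box, so the sum of indices equals $1$), after which continuity is a soft compactness-plus-uniqueness argument. Your route buys a strictly stronger conclusion --- the map $u\mapsto(t_u,s_u)$ is canonically well defined --- and a cleaner continuity proof; the paper's local-degree route buys robustness: it needs only the local maximum property, which is precisely what survives in the borderline case $q=2$ with $p>2$, where (see the Remark following the lemma) the sign of $\det\operatorname{D}^2F^u$ is no longer guaranteed and your global index count would collapse. Two small points to make explicit in a write-up: the inward pointing of $\nabla\Psi$ on the near sides $\{t=\delta\}$ and $\{s=\delta\}$ is not uniform in the other variable because of the cross term $t^{q-1}s^qE$, so you must fix the far sides $T,S$ first and then choose $\delta$ small depending on $S^qE$ and $T^qE$ (this ordering is exactly what the paper's function $R(r)$ encodes); and your phrase that the lemma ``is equivalent to'' uniqueness of the interior critical point overstates matters --- uniqueness is sufficient but not necessary, as the paper's proof of the lemma without uniqueness demonstrates.
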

\begin{proof}
Let $u\in H_0^1(\Omega)$ with $u^{\pm}\neq 0$ be a fixed function. We define $J^u_1,J^u_2:[0,+\infty)^2 \to \mathbb{R}$ respectively by
\begin{equation*}
\begin{array}{lll}
J^u_1(t,s)&=&I_q'(tu^+ +su^-)tu^+\\
&=&t^2\|u^+\|^2-t^p\int_{\Omega}|u^+|^p-t^{2q}\int_{\Omega}\phi_{u^+}|u^+|^q-t^q s^q\int_{\Omega}\phi_{u^-}|u^+|^q
\end{array}
\end{equation*}
and
\begin{equation*}
\begin{array}{lll}
J^u_2(t,s)&=&I_q'(tu^+ +su^-)su^-\\
&=&s^2\|u^-\|^2-s^p\int_{\Omega}|u^-|^p-s^{2q}\int_{\Omega}\phi_{u^-}|u^-|^q-t^q s^q\int_{\Omega}\phi_{u^+}|u^-|^q.
\end{array}\end{equation*}
Let $R: (0,+\infty)\to \mathbb{R}^1$ be  $$R(r)=\left(\frac{r^{2-q}\|u^+\|^2}{4\int_{\Omega}\phi_{u^-}|u^+|^q}\right)^{1/q},$$
where $r\in(0,+\infty).$ Clearly, $R(r)\to +\infty$ as $r\to 0,$ because $q>2.$

Since $p\in(2,6)$ and $q\in(2,5),$ by Sobolev inequality and (i) of Proposition \ref{lem1.1}, there exists $r_1>0$ small enough such that for any $r\in(0,r_1),$
\begin{equation*}\begin{array}{lll}
r^2\|u^+\|^2-r^p\int_{\Omega}|u^+|^p-r^{2q}\int_{\Omega}\phi_{u^+}|u^+|^q&\geq
r^2\|u^+\|^2-C_1r^p\|u^+\|^p-r^{2q}C_2\|u^+\|^q\\
&\geq
\frac{1}{2}r^2\|u^+\|^2.
\end{array}\end{equation*}
Then for any $r\in(0,r_1),$
\begin{equation}\label{eql4.14}\begin{array}{lll}
J^u_1(r,R(r))&=&r^2\|u^+\|^2-r^p\int_{\Omega}|u^+|^p-r^{2q}\int_{\Omega}\phi_{u^+}|u^+|^q-r^q R^q(r)\int_{\Omega}\phi_{u^-}|u^+|^q\\
&\geq& \frac{1}{4}r^2\|u^+\|^2>0.
\end{array}\end{equation}
In addition, since $J^u_1(R(r),r)\to -\infty$ as $r\to 0,$ there exists $0<r_2\leq r_1$ such that for any $r\in(0,r_2),$
\begin{equation}\label{eql4.15}
J^u_1(R(r),r)=R^2(r)\|u^+\|^2-R^p(r)\int_{\Omega}|u^+|^p-R^{2q}(r)\int_{\Omega}\phi_{u^+}|u^+|^q-r^q R^q(r)\int_{\Omega}\phi_{u^-}|u^+|^q
<0.
\end{equation}
Since $\frac{\partial J^u_1}{\partial s}(t,s)<0$ in $(0,+\infty)^2,$ we infer from \eqref{eql4.14} and \eqref{eql4.15} that for any $r\in (0,r_2),$
\begin{equation}\label{eq4.14}
J^u_1(r,s)>0\ \mbox{and}\ J^u_1(R(r),s)<0\ \mbox{for any}\ s\in (r,R(r)).
\end{equation}
Similarly, there exists $\tilde{r}_2>0$ such that
for any $r\in (0,\tilde{r}_2),$
\begin{equation}\label{eq4.15}
J^u_2(t,r)>0\ \mbox{and}\ J^u_2(t,R(r))<0\ \mbox{for any}\ t\in (r,R(r)).
\end{equation}
Let $r_0=\frac{1}{2}\min\{r_2,\tilde{r}_2\}.$ Then \eqref{eq4.14} and \eqref{eq4.15} imply that
\begin{equation}\label{eq4.16}
J^u_1(r_0,s)>0\ \mbox{and}\ J^u_1(R(r_0),s)<0\ \mbox{for any}\ s\in (r_0,R(r_0))
\end{equation}
and
\begin{equation}\label{eq4.17}
J^u_2(t,r_0)>0\ \mbox{and}\ J^u_2(t,R(r_0))<0\ \mbox{for any}\ t\in (r_0,R(r_0)).
\end{equation}

Define the vector field
\begin{equation*}
V^u(t,s):=(J^u_1(t,s)/t, J^u_2(t,s)/s).
\end{equation*}
By applying   Miranda Theorem (see\cite{Miranda} or \cite{Vrahatis}) to $V^u(t,s)$ in $[r_0,R(r_0)]\times [r_0,R(r_0)]$, we conclude from \eqref{eq4.16} and \eqref{eq4.17} that  there exist $t_u, s_u\in (r_0,R(r_0))$ such that
$V^u(t_u,s_u)=(0,0).$ Hence, $t_u u^+ +s_u u^-\in \mathcal{N}_{nod,q}.$

To prove the continuity of the map $u\mapsto (t_u,s_u)$, now we assume  $u_n\to u$ in $H_0^1(\Omega)$ as $n\to\infty.$ First, we show that $(t_u,s_u)$ is a local strict maximizer of $F^u:(0,+\infty)^2\to \mathbb{R}$ defined by
$$F^u(t,s)=\frac{1}{2}\|tu^+ +su^-\|^2-\frac{1}{p}\int_{\Omega}|tu^+ +su^-|^p-\frac{1}{2q}\int_{\Omega}\phi_{tu^+ +su^-} |tu^+ +su^-|^q.$$
In fact, it is easy to see that
\begin{equation}\label{eq4.18}
\frac{\partial F^u}{\partial t}(t_u,s_u)=\frac{1}{t_u}J^u_1(t_u,s_u)=0, \quad \frac{\partial F^u}{\partial s}(t_u,s_u)=\frac{1}{s_u}J^u_2(t_u,s_u)=0.
\end{equation}
Since $p\in(2,6)$ and $q\in(2,5)$, we deduce
\begin{equation}\label{eq4.19}\begin{array}{rll}
&\frac{\partial^2 F^u}{\partial t^2}(t_u,s_u)\\
=&\|u^+\|^2-(p-1)t_u^{p-2}\int_{\Omega}|u^+|^p-(2q-1)t_u^{2q-2}\int_{\Omega}\phi_{u^+}|u^+|^q
-(q-1)t_u^{q-2}s_u^{q}\int_{\Omega}\phi_{u^-}|u^+|^q\\
=&(2-p)t_u^{p-2}\int_{\Omega} |u^+|^p +(2-2q)t_u^{2q-2}\int_{\Omega} \phi_{u^+}|u^+|^q +(2-q)t_u^{q-2}s_u^{q}\int_{\Omega} \phi_{u^-}|u^+|^q\\
=&(2-p)t_u^{p-2}B_{1}+(2-2q)t_u^{2q-2}E_{1}+(2-q)t_u^{q-2}s_u^{q}E\\
<&0,
\end{array}\end{equation}
\begin{equation}\label{eq4.20}
\frac{\partial^2 F^u}{\partial s^2}(t_u,s_u)
=(2-p)s_u^{p-2}B_{2}+(2-2q)s_u^{2q-2}E_{2}+(2-q)s_u^{q-2}t_u^{q}E,
\end{equation}
and
\begin{equation}\label{eq4.21}
\frac{\partial^2 F^u}{\partial t\partial s}(t_u,s_u)=\frac{\partial^2 F^u}{\partial s\partial t}(t_u,s_u)=-qt_u^{q-1}s_u^{q-1}E,
\end{equation}
where $E,E_1,E_2$ are defined in \eqref{eq4.9} and
\begin{equation*}
\begin{aligned}
&B_{1}=\int_{\Omega} |u^+|^p, \quad
B_{2}=\int_{\Omega} |u^-|^p.
\end{aligned}
\end{equation*}
By \eqref{eq4.8}, we have  $$t_u^{3q-2}s_u^{q-2}E_1+t_u^{q-2}s_u^{3q-2}E_2\geq 2t_u^{2q-2}s_u^{2q-2}\sqrt{E_1 E_2}\geq 2t_u^{2q-2}s_u^{2q-2}E.$$
This, combined with \eqref{eq4.19}-\eqref{eq4.21},  implies that
\begin{equation}\label{eq4.22}\begin{array}{lll}
&\det \operatorname{D}^2 F^u(t_u,s_u)\\
=&\det \left(\begin{array}{lll}
\frac{\partial^2 F^u}{\partial t^2} & \frac{\partial^2 F^u}{\partial t\partial s} \\
\frac{\partial^2 F^u}{\partial s\partial t} & \frac{\partial^2 F^u}{\partial s^2}
\end{array}\right)(t_u,s_u) \\
=&(2-p)^2(t_u s_u)^{2p-2}B_{1}B_{2}+(2-p)(2-2q)(t_u^{p-2}s_u^{2q-2}B_{1}E_{2}+t_u^{2q-2}s_u^{p-2}E_{1}B_{2})\\
&+(2-p)(2-q)(t_u^{p+q-2}s_u^{q-2}B_{1}+s_u^{p+q-2}t_u^{q-2}B_{2})E\\
&+(2-q)(2-2q)(t_u^{3q-2}s_u^{q-2}E_1+s_u^{3q-2}t_u^{q-2}E_2)E+[(2-q)^2-q^2]t_u^{2q-2}s_u^{2q-2}E^2\\
\geq &(2-p)^2(ts)^{2p-2}B_{1}B_{2}+(2-p)(2-2q)(t_u^{p-2}s_u^{2q-2}B_{1}E_{2}+t_u^{2q-2}s_u^{p-2}E_{1}B_{2})\\
&+(2-p)(2-q)(t_u^{p+q-2}s_u^{q-2}B_{1}+s_u^{p+q-2}t_u^{q-2}B_{2})E\\
&+8(q-1)(q-2)t_u^{2q-2}s_u^{2q-2}E^2\\
>&0,\quad \mbox{here we use the fact $p,q>2.$}
\end{array}\end{equation}
Therefore, it follows from \eqref{eq4.19}, \eqref{eq4.22} that $\operatorname{D}^2F^u(t_u,s_u)$ is a negative definite matrix. Thus by \eqref{eq4.18}, $(t_u,s_u)$ is a local strict maximum point of $F^u$ in $(0,+\infty)^2.$

Furthermore, by the local strict maximum property of $F^u,$ there exists $\epsilon_0>0$  such that for any $0<\epsilon<\epsilon_0,$ we have $(0,0)\notin DF^u(\partial \mathcal{B}_{\epsilon}(t_u,s_u))$ and
$$\deg(V^u,(0,0),\mathcal{B}_{\epsilon}(t_u,s_u))=\deg(DF^u,(0,0),\mathcal{B}_{\epsilon}(t_u,s_u))=sgn \det(\operatorname{D}^2 F^u)(t_u,s_u)=1,$$
where $\deg$ represents Brouwer degree,  and $\mathcal{B}_{\epsilon}(t_u,s_u)\subset \mathbb{R}^2$ denotes an open ball with radius $\epsilon$ centered at $(t_u,s_u)\subset \mathbb{R}^2.$

Note that $F^u(t,s)\to -\infty$ as $|(t,s)|\to \infty$ uniformly for $u$ in a bounded set. Then $|(t_{u_n},s_{u_n})|$ is bounded. In addition, it is easy to verify that $F^{u_n}\to F^u$ uniformly over any compact subset of $[0,+\infty)^2.$ Thus for $n$ large enough, $(0,0)\notin DF^{u_n}(\partial \mathcal{B}_{\epsilon})$ and by the properties of Brouwer degree, $$\deg(V^{u_n},(0,0),\mathcal{B}_{\epsilon}(t_u,s_u))=\deg(V^u,(0,0),\mathcal{B}_{\epsilon}(t_u,s_u))=1.$$ This implies that there exists $(t_{u_n},s_{u_n})\in B_{\epsilon}((t_u,s_u))$ such that $V^{u_n}(t_{u_n},s_{u_n})=(0,0).$
Since $\epsilon<\epsilon_0$ is arbitrary, by letting $\epsilon\to 0,$ we can conclude that $(t_{u_n},s_{u_n})\to (t_u,s_u)$ as $n\to \infty$. Hence the map $u\mapsto (t_u,s_u)$ is continuous.  We complete the proof.
\end{proof}

\begin{Remark}
Obviously, if the local term $|u|^{p-2}u$ disappears or appears with $p=2$ in \eqref{eq1.1}, one can infer from \eqref{eq4.22} that
$$\begin{array}{lll}&\det \operatorname{D}^2F^u(t_u,s_u)\\
&=(2-q)(2-2q)(t_u^{3q-2}s_u^{q-2}E_1+s_u^{3q-2}t_u^{q-2}E_2)E+[(2-q)^2-q^2]t_u^{2q-2}s_u^{2q-2}E^2\\
&\geq 8(q-1)(q-2)t_u^{2q-2}s_u^{2q-2}E^2>0\ \mbox{for}\ q>2\\
\end{array}$$
and
$$\det \operatorname{D}^2F^u(t_u,s_u)=[(2-q)^2-q^2]t_u^{2q-2}s_u^{2q-2}E^2=-4t_u^2s_u^2E^2<0\ \mbox{for}\ q=2.$$
However, when the local term $|u|^{p-2}u$ appears with $p>2$ in \eqref{eq1.1}, for  $q=2$  the sign of the following expressoin is not certain
$$\begin{array}{lll}
&\det \operatorname{D}^2F^u(t_u,s_u)\\
&=(2-p)^2(t_u s_u)^{2p-2}B_{1}B_{2}+(2-p)(2-2q)(t_u^{p-2}s_u^2B_{1}E_{2}+t_u^2s_u^{p-2}E_{1}B_{2})\\
&+[(2-q)^2-q^2]t_u^2s_u^2E^2\\
&=(2-p)^2(t_u s_u)^{2p-2}B_{1}B_{2}+2(p-2)(t_u^{p-2}s_u^2B_{1}E_{2}+t_u^2s_u^{p-2}E_{1}B_{2})-4t_u^2s_u^2E^2.\\
\end{array}$$
 This difference makes us consider term $|u|^{p-2}u$ and treat equation \eqref{eq1.1} by different methods in the proof of Theorem \ref{theoremmain} for $q>2$ and $q=2$. Furthermore, the proof for the existence of least energy nodal solutions to equation \eqref{eq1.1} with $p=2$ is different from the one with $p>2,$ which is stated in section 4.2.
\end{Remark}

For simplicity of notations, we shall write
\begin{equation}\label{eq4.23}
\begin{aligned}
 B_{n,1}&=\int_{\Omega} |u_n^+|^p, &
 & B_{n,2}&=\int_{\Omega} |u_n^-|^p.
\end{aligned}
\end{equation}
\begin{equation}\label{eq4.24}
E_{n,1}:=\int_{\Omega} \phi_{u_n^+}|u_n^+|^q,\quad E_{n,2}:=\int_{\Omega} \phi_{u_n^-}|u_n^-|^q,
\quad E_n:=\int_{\Omega} \phi_{u_n^-}|u_n^+|^q=\int_{\Omega} \phi_{u_n^+}|u_n^-|^q.
\end{equation}

\noindent\textbf{Proof of Theorem \ref{theoremmain}:}
We shall complete the proof by  distinguishing  three cases.

\noindent \textbf{Case 1.} \emph{Existence for $q\in (2,5).$}

By (ii) of Lemma \ref{lembddbelow} and Ekeland variational principle, we see that there exists a sequence $(u_n)_{n\geq1}\subset \mathcal{N}_{nod,q}$ such that
\begin{align}
I_q(u_n)&\leq m_{nod,q}+\frac{1}{n}, \\
I_q(u_n)&\leq I_q(v)+\frac{1}{n}\|u_n-v\|,\ \mbox{for any}\ v\in \mathcal{N}_{nod,q}.\label{eq4.25}
\end{align}
By using similar argument as in \eqref{eq4.1}, $(u_n)_{n\geq 1}$ are uniformly bounded in $H_0^1(\Omega).$ Then up to a subsequence, there exists $u\in H_0^1(\Omega)$ such that $u_n\rightharpoonup u$ in $H_0^1(\Omega).$ Moreover, we claim  $u\neq 0$. Since otherwise, it follows from $\|u_n\|^2=\int_{\Omega}|u_n|^p+\int_{\Omega}\phi_{u_n}|u_n|^q\to 0$ that $\|u_n\|\to 0$, which contradicts with Lemma \ref{lembddbelow}(iii).

In the following, we shall show that
\begin{equation}\label{eq4.26}
I_q'(u_n)\varphi\to 0\ \mbox{for any } \varphi\in H_0^1(\Omega)
\end{equation}
as $n\to +\infty$. First, for each $n\geq1$ and any fixed $\phi\in C_c^{\infty}(\Omega)$, we define two functions $\Phi_n, \Psi_n:\mathbb{R}\times \mathbb{R}_+^2\to \mathbb{R}$ by
\begin{align*}
\Phi_n(\delta,\tilde{t},\tilde{s})
=I_q'(\tilde{t}(u_n+\delta \phi)^+ +\tilde{s}(u_n+\delta \phi)^-)(\tilde{t}(u_n+\delta \phi)^+),\\
\Psi_n(\delta,\tilde{t},\tilde{s})
=I_q'(\tilde{t}(u_n+\delta \phi)^+ +\tilde{s}(u_n+\delta \phi)^-)(\tilde{s}(u_n+\delta \phi)^-).
\end{align*}
Obviously, $\Phi_n,\Psi_n,\frac{\partial \Phi_n}{\partial \tilde{t}},\frac{\partial \Phi_n}{\partial \tilde{s}},\frac{\partial \Psi_n}{\partial \tilde{t}},\frac{\partial \Psi_n}{\partial \tilde{s}}$ is continuous in some neighborhood $U\times V\subset \mathbb{R}\times \mathbb{R}^2_+$ of $(0,1,1).$
Since $u_n\in \mathcal{N}_{nod,q},$ we have
$$\Phi_n(0,1,1)=\Psi_n(0,1,1)=0$$
and in view of \eqref{eq4.19}, \eqref{eq4.20} and \eqref{eq4.21},
\begin{equation}\label{eq4.27}\begin{array}{lll}
\frac{\partial \Phi_n}{\partial \tilde{t}}(0,1,1)&=2\| u_n^+\|^2-p\int_{\Omega} |u_n^+|^p-2q\int_{\Omega} \phi_{u_n^+}|u_n^+|^q -q\int_{\Omega} \phi_{u_n^-}|u_n^+|^q\\
&=(2-p)B_{n,1}+(2-2q)E_{n,1}+(2-q)E_n,
\end{array}\end{equation}
\begin{equation}\begin{array}{lll}
\frac{\partial \Psi_n}{\partial \tilde{s}}(0,1,1)
&=(2-p)B_{n,2}+(2-2q)E_{n,2}+(2-q)E_n,
\end{array}\end{equation}
and
\begin{equation}\label{eq4.28}
\frac{\partial \Phi_n}{\partial \tilde{s}}(0,1,1)=\frac{\partial \Psi_n}{\partial \tilde{t}}(0,1,1)=-q\int_{\Omega} \phi_{u_n^-}|u_n^+|^q=-qE_n.
\end{equation}
Here $B_{n,i},E_{n,i},E_n, i=1,2$ are defined as in \eqref{eq4.23} and \eqref{eq4.24}.
Since $p\in(2,6), q\in (2,5)$, by \eqref{eq4.22}, we infer from \eqref{eq4.8}, \eqref{eq4.27}-\eqref{eq4.28} that
\begin{equation}\label{eq4.29}\begin{array}{lll}
&\det \left(\begin{array}{lll}
\frac{\partial \Phi_n}{\partial \tilde{t}} & \frac{\partial \Phi_n}{\partial \tilde{s}} \\
\frac{\partial \Psi_n}{\partial \tilde{t}} & \frac{\partial \Psi_n}{\partial \tilde{s}}
\end{array}\right)(0,1,1) \\
\geq &(2-p)^2B_{n,1}B_{n,2}+(p-2)(2q-2)(B_{n,1}E_{n,2}+E_{n,1}B_{n,2})\\
&+(p-2)(q-2)(B_{n,1}+B_{n,2})E_n+8(q-1)(q-2)E_n^2\\
>&0.
\end{array}\end{equation}
Then the implicit function theorem yields that there exist $\delta_n>0$ and two functions $\tilde{t}_n(\delta),\tilde{s}_n(\delta)\in C^0((-\delta_n,\delta_n),\mathbb{R})$ such that
$$\tilde{t}_n(0)=\tilde{s}_n(0)=1$$ and
\begin{equation}\label{eq4.30}
\Phi_n(\delta,\tilde{t}_n(\delta),\tilde{s}_n(\delta))=\Psi_n(\delta,\tilde{t}_n(\delta),\tilde{s}_n(\delta))=0\ \mbox{for all} \ \delta\in(-\delta_n,\delta_n).
\end{equation}
This implies that
\begin{equation*}
\tilde{t}_n(\delta)(u_n+\delta \phi)^+ +\tilde{s}_n(\delta)(u_n+\delta \phi)^-\in N_{nod,q}
\end{equation*}
for all $\delta \in (-\delta_n,\delta_n).$

Next, if we write $u_n+\delta \phi$ by $u_{n,\delta}$ for simplicity and replace $v$ by $\tilde{t}_n(\delta)u_{n,\delta}^+ +\tilde{s}_n(\delta)u_{n,\delta}^-$ in \eqref{eq4.25} and use the Taylor expansion, i.e.,
\begin{equation*}\begin{array}{lll}
&&I_q(\tilde{t}_n(\delta)u_{n,\delta}^+ +\tilde{s}_n(\delta)u_{n,\delta}^-)\\
&=&I_q(u_n)+I_q'(u_n)(\tilde{t}_n(\delta)u_{n,\delta}^+ +\tilde{s}_n(\delta)u_{n,\delta}^- -u_n) +o(\|\tilde{t}_n(\delta)u_{n,\delta}^+ +\tilde{s}_n(\delta)u_{n,\delta}^--u_n\|)\\
&=&I_q(u_n)+I_q'(u_n)((\tilde{t}_n(\delta)-1)u_{n,\delta}^+ +(\tilde{s}_n(\delta)-1)u_{n,\delta}^- +\delta\phi)\\
&&\quad +o(\|(\tilde{t}_n(\delta)-1)u_{n,\delta}^+ +(\tilde{s}_n(\delta)-1)u_{n,\delta}^- +\delta\phi\|)\\
&=&I_q(u_n)+\delta I_q'(u_n)\phi+(\tilde{t}_n(\delta)-1)I_q'(u_n)u_{n,\delta}^+ +(\tilde{s}_n(\delta)-1)I_q'(u_n)u_{n,\delta}^-\\
&&\quad +o(\|(\tilde{t}_n(\delta)-1)u_{n,\delta}^+ +(\tilde{s}_n(\delta)-1)u_{n,\delta}^- +\delta\phi\|)\\
&=&I_q(u_n)+\delta I_q'(u_n)\phi +(\tilde{t}_n(\delta)-1)I_q'(u_n)[u_{n,\delta}^+-u_n^+] +(\tilde{s}_n(\delta)-1)I_q'(u_n)[u_{n,\delta}^--u_n^-]\\
&&\quad +o(\|(\tilde{t}_n(\delta)-1)u_{n,\delta}^+ +(\tilde{s}_n(\delta)-1)u_{n,\delta}^- +\delta\phi\|),
\end{array}\end{equation*}
then we have
\begin{equation}\label{eq4.34}\begin{array}{lll}
I_q'(u_n)\phi&\geq -\frac{1}{n\delta}\|(\tilde{t}_n(\delta)-1)u_{n,\delta}^+ +(\tilde{s}_n(\delta)-1)u_{n,\delta}^- +\delta\phi\|\\ &-\frac{1}{\delta}\left\{(\tilde{t}_n(\delta)-1)I_q'(u_n)[u_{n,\delta}^+-u_n^+] +(\tilde{s}_n(\delta)-1)I_q'(u_n)[u_{n,\delta}^--u_n^-]\right\}\\
&+o(\|(\tilde{t}_n(\delta)-1)u_{n,\delta}^+ +(\tilde{s}_n(\delta)-1)u_{n,\delta}^- +\delta\phi\|/\delta).
\end{array}\end{equation}

We claim that $\frac{\tilde{t}(\delta)-1}{\delta}$ and  $\frac{\tilde{s}(\delta)-1}{\delta}$ is bounded for $\delta$ near $0.$ In fact, without loss of generality, suppose on the contrary that there exists $\delta_j\to 0$ such that $a_{\infty}:=\lim\limits_{j\to\infty}\frac{\tilde{t}(\delta_j)-1}{\delta_j}\to +\infty$ as $j\to+\infty.$ If it is necessary, we may still denote by $b_{\infty}:=\lim\limits_{j\to\infty}\frac{\tilde{s}(\delta_j)-1}{\delta_j}$, where the limit may be up to a subsequence.

Note that the facts of $I_q'(u_n)u_n^+$ and $I_q'(u_{n,\delta})u_{n,\delta}^+$ imply
\begin{equation}\begin{array}{lll}\label{kedao}
0&=&(\tilde{t}^2(\delta_j)-1)\|u_{n,\delta_j}^+\|^2
-(\tilde{t}^p(\delta_j)-1)\|u_{n,\delta_j}^+\|_{L^p}^p
-(\tilde{t}^{2q}(\delta_j)-1)\int_{\Omega}\phi_{u_{n,\delta_j}^+}|u_{n,\delta_j}^+|^{q}\\
&&-\tilde{s}^q(\delta_j)(\tilde{t}^q(\delta_j)-1)\int_{\Omega}\phi_{u_{n,\delta_j}^-}|u_{n,\delta_j}^+|^{q}
-(\tilde{s}^q(\delta_j)-1)\int_{\Omega}\phi_{u_{n,\delta_j}^-}|u_{n,\delta_j}^+|^{q}\\
&&+\|u_{n,\delta_j}^+\|^2
-\|u_{n,\delta_j}^+\|_{L^p}^p
-\int_{\Omega}\phi_{u_{n,\delta_j}^+}|u_{n,\delta_j}^+|^{q}
-\int_{\Omega}\phi_{u_{n,\delta_j}^-}|u_{n,\delta_j}^+|^{q}\\
&&-[\|u_n^+\|^2
-\|u_n^+\|_{L^p}^p
-\int_{\Omega}\phi_{u_n^+}|u_n^+|^{q}
-\int_{\Omega}\phi_{u_n^-}|u_n^+|^{q}].
\end{array}\end{equation}
Since  $\tilde{t}(\delta_j)\to 1$ and $\tilde{s}(\delta_j)\to 1$, we have
\begin{equation}\label{tquyu}\begin{array}{lll}
\frac{\tilde{t}^2(\delta_j)-1}{\delta_j}=\frac{\tilde{t}(\delta_j)-1}{\delta_j}\frac{\tilde{t}^2(\delta_j)-1}{\tilde{t}(\delta_j)-1}\to 2a_{\infty},
\frac{\tilde{t}^p(\delta_j)-1}{\delta_j}\to pa_{\infty},
\frac{\tilde{t}^{2q}(\delta_j)-1}{\delta_j}\to 2q a_{\infty},
\frac{\tilde{s}^{q}(\delta_j)-1}{\delta_j}\to q b_{\infty},
\end{array}\end{equation}
Moreover, since
$$\begin{array}{lll}
2[u_{n,\delta_j}^+ -u_n^+]&=|u_{n,\delta_j}|+u_{n,\delta_j}-(|u_n|+u_n)=|u_{n,\delta_j}|-|u_n|+\delta_j \phi
=\frac{2\delta_j u_n\phi+\delta_j^2 \phi^2}{|u_{n,\delta_j}|+|u_n|}+\delta_j\phi,
\\
2[u_{n,\delta_j}^- -u_n^-]&=u_{n,\delta_j}-|u_{n,\delta_j}|-(u_n-|u_n|)=|u_n|-|u_{n,\delta_j}|+\delta_j \phi
=\frac{-2\delta_j u_n\phi-\delta_j^2 \phi^2}{|u_{n,\delta_j}|+|u_n|}+\delta_j\phi
\end{array}$$
we have
$$
2\|[u_{n,\delta_j}^+ -u_n^+]/\delta_j\|=\|\frac{2u_n\phi+\delta_j \phi^2}{|u_{n,\delta_j}|+|u_n|}+\phi\|\to \|\frac{u_n\phi}{|u_n|}+\phi\|,$$
$$2\|[u_{n,\delta_j}^- -u_n^-]/\delta_j\|=\|\frac{-2u_n\phi-\delta_j \phi^2}{|u_{n,\delta_j}|+|u_n|}+\phi\|\to \|\frac{-u_n\phi}{|u_n|}+\phi\|$$
and
$$\frac{1}{\delta_j}\left|\|u_{n,\delta_j}^+\|^2-\|u_n^+\|^2\right|\leq \frac{1}{\delta_j}\left|\|u_{n,\delta_j}^+-u_n^+\|^2\right|\to \frac{1}{2}\|\frac{u_n\phi}{|u_n|}+\phi\|,$$
$$\frac{1}{\delta_j}\left|\|u_{n,\delta_j}^-\|^2-\|u_n^-\|^2\right|\leq \frac{1}{\delta_j}\left|\|u_{n,\delta_j}^--u_n^-\|^2\right|\to \frac{1}{2}\|\frac{-u_n\phi}{|u_n|}+\phi\|$$
 as $\delta_j\to 0.$ This shows that  $\frac{1}{\delta_j}\left|\|u_{n,\delta_j}^+\|^2-\|u_n^+\|^2\right|$ is bounded for small $\delta_j$. Furthermore, using similar arguments, we can prove that
\begin{equation}\begin{array}{lll}
&\frac{1}{\delta_j}\left\{\|u_{n,\delta_j}^+\|^2
-\|u_{n,\delta_j}^+\|_{L^p}^p
-\int_{\Omega}\phi_{u_{n,\delta_j}^+}|u_{n,\delta_j}^+|^{q}
-\int_{\Omega}\phi_{u_{n,\delta_j}^-}|u_{n,\delta_j}^+|^{q}-(\|u_n^+\|^2\right.\\
&\left.
-\|u_n^+\|_{L^p}^p
-\int_{\Omega}\phi_{u_n^+}|u_n^+|^{q}
-\int_{\Omega}\phi_{u_n^-}|u_n^+|^{q})\right\}
\end{array}\end{equation}
is bounded for all small $\delta_j.$
By letting $\delta_j\to 0,$ this together with \eqref{kedao},\eqref{tquyu}, gives that
$$\begin{array}{lll}
a_{\infty}(2\|u_n^+\|^2
-p\|u_n^+\|_{L^p}^p
-2q\int_{\Omega}\phi_{u_n^+}|u_n^+|^{q}-q \int_{\Omega}\phi_{u_n^-}|u_n^+|^{q})
-q b_{\infty}\int_{\Omega}\phi_{u_n^-}|u_n^+|^{q}=C_{1,n}
\end{array}$$
for some $C_{1,n}\in \mathbb{R}.$ That is,
\begin{equation}\label{gx1}
a_{\infty}A_n^1- b_{\infty}B_n= C_{1,n},
\end{equation}
where
$$\begin{array}{lll}
A_n^1:=
(2-p)\|u_n^+\|_{L^p}^p+
(2-2q)\int_{\Omega}\phi_{u_n^+}|u_n^+|^{q}+(2-q) \int_{\Omega}\phi_{u_n^-}|u_n^+|^{q}<0,\\
B_n:=(2-q) b_{\infty}\int_{\Omega}\phi_{u_n^-}|u_n^+|^{q}.
\end{array}$$
Then it follows from the assumption $a_{\infty}=+\infty$ that $b_{\infty}=-\infty.$
Moreover, similar arguments lead to that
\begin{equation}\label{guanxi2}
b_{\infty}A_n^2- a_{\infty}B_n= C_{2,n}
\end{equation}
for some $C_{2,n}\in \mathbb{R},$ where
$$A_n^2:=
(2-p)\|u_n^-\|_{L^p}^p+
(2-2q)\int_{\Omega}\phi_{u_n^-}|u_n^-|^{q}+(2-q) \int_{\Omega}\phi_{u_n^-}|u_n^+|^{q}.
$$
Note that $C_{1,n},C_{2,n}$ are bounded due to the boundness of $u_n.$  Thus, for large $n,$ it follows from
\eqref{gx1} and \eqref{guanxi2} that
\begin{equation}\label{dayu}
a_{\infty}(A_n^1A_n^2-B_n^2)=C_{1,n}A_n^2+C_{2,n}B_n.
\end{equation}
But since \eqref{eq4.27}-\eqref{eq4.29} give that
$$A_n^1A_n^2-B_n^2=\det \left(\begin{array}{lll}
\frac{\partial \Phi_n}{\partial \tilde{t}} & \frac{\partial \Phi_n}{\partial \tilde{s}} \\
\frac{\partial \Psi_n}{\partial \tilde{t}} & \frac{\partial \Psi_n}{\partial \tilde{s}}
\end{array}\right)(0,1,1)>0,$$
it yields a contradiction in \eqref{dayu}. So the claim holds true.

By using this claim and letting $\delta\to 0$, one can deduce from \eqref{eq4.34} that
$$|I_q'(u_n)\phi|\leq \frac{C}{n}$$
for some $C>0$ independent of $n.$
Furthermore, the arbitrary choice of $\phi\in C_c^{\infty}(\Omega)$ yields that for any $\varphi\in H_0^1(\Omega),$
$$I_q'(u_n)\varphi\to 0\ \mbox{as}\ n\to \infty.$$
Thus \eqref{eq4.26} follows.

Since $u_n\rightharpoonup u$ in $H_0^1(\Omega),$
$u_n\to u$ in $L^s(\Omega)$ with $ s\in[1,6).$ This, together with (iii) of Proposition \ref{lem1.1} and \eqref{eq4.26}, implies that $I_q'(u)\varphi=0$ for any $\varphi\in H_0^1(\Omega).$ Take $\varphi=u^{\pm}.$  Then there holds
\begin{equation}\label{eq4.36}
\|u^{\pm}\|^2=\int_{\Omega} |u^{\pm}|^p+\int_{\Omega} \phi_u |u^{\pm}|^q.
\end{equation}
In addition, we have
\begin{equation}\label{eq4.37}
\int_{\Omega} |u_n^{\pm}|^p\to \int_{\Omega} |u^{\pm}|^p\ \mbox{and}\ \int_{\Omega} \phi_{u_n} |u_n^{\pm}|^q\to \int_{\Omega} \phi_{u} |u^{\pm}|^q,
\end{equation}
Since $u_n\in \mathcal{N}_{nod,q},$ we infer from \eqref{eq4.36} and \eqref{eq4.37} that $u_n^{\pm}\to u^{\pm}$ in $H_0^1(\Omega).$ Thus, it follows from (iii) of Lemma \ref{lembddbelow} that $u^{\pm}\neq 0.$ Moreover,
$$\lim\limits_{n\to \infty}I_q(u_n)=I_q(u)=m_{nod,q}.$$
Therefore, $u$ is a least energy nodal solution of \eqref{eq1.1} for $q\in (2,5)$.

\noindent\textbf{Case 2.} \emph{Existence for $q=2.$}

First, we shall show
\begin{equation}\label{eq4.38}
\limsup_{q\searrow 2} m_{nod,q}\leq m_{nod,2}
\end{equation}
where $m_{nod,2}$ is defined in \eqref{eq4.10}.

Let $w\in \mathcal{N}_{nod,2}$.  For any $q\in[2,5)$, we define
$i_q:[0,\infty)\times [0,\infty)\to \mathbb{R}$ by
\begin{equation*}\begin{array}{lll}
i_q(t,s)&=&\frac{t^2}{2}\|w^+\|^2+\frac{s^2}{2}\|w^-\|^2
-\frac{t^p}{p}\int_{\Omega}|w^+|^p-\frac{s^p}{p}\int_{\Omega}|w^-|^p
-\frac{t^{2q}}{2q}\int_{\Omega} \phi_{w^+} |w^+|^q\\
&&-\frac{s^{2q}}{2q}\int_{\Omega} \phi_{w^-} |w^-|^q-\frac{t^q s^q}{q}\int_{\Omega} \phi_{w^-} |w^+|^q.
\end{array}\end{equation*}
Clearly, $\frac{\partial i_2}{\partial t}(1,1)=\frac{\partial i_2}{\partial s}(1,1)=0.$ Since $q\mapsto \int_{\Omega}\phi_w |w^{\pm}|^q$ is continuous, we have
$i_q\to i_2$ as $q\to 2$ uniformly on every compact set of $[0,+\infty)\times [0,+\infty).$ Then by similar arguments as in the proof of Lemma \ref{eq4.12}, there exists $t_q, s_q\in (0,+\infty)$ such that
$t_q w^+ +s_q w^-\in \mathcal{N}_{nod,q}$ and $(t_q, s_q)\to (1,1)$ as $q\to 2.$
This implies that
\begin{equation*}
\lim\limits_{q\searrow 2}I_q(t_q w^+ +s_q w^-)=I_2(w).
\end{equation*}
Note that $m_{nod,q}\leq I_q(t_q w^+ +s_q w^-).$
 Hence, by the  arbitrary choice of $w\in \mathcal{N}_{nod,2},$ \eqref{eq4.38} follows immediately.

Second, according to Case 1, a least energy nodal solution  $u_q\in H_0^1(\Omega)$  of \eqref{eq1.1} with $q\in(2,5)$ exists. Since $m_{nod,2}\leq I_2(w)<+\infty$,  it follows from \eqref{eq4.38} that if $q$ is close to 2, $\|u_q\|$ are uniformly bounded. Then there exists $u\in H_0^1(\Omega)$ such that, up to a subsequence, $u_q\rightharpoonup u$ in $H_0^1(\Omega)$ as $q\searrow 2.$ Notice that  $I_q'(u_q)\phi=0$ for any $\phi\in H_0^1(\Omega)$. Then we have
\begin{equation*}
I_{2}'(u)\phi=0\ \mbox{for any $\phi\in H_0^1(\Omega)$}.
\end{equation*}
Taking $\phi=u$ and using the facts that
$$I_q'(u_q)u_q=0,\ \int_{\Omega} |u_q|^p\to \int_{\Omega} |u|^p
\mbox{\, and \,}\ \int_{\Omega} \phi_{u_q}|u_q|^{q}\to \int_{\Omega} \phi_{u} |u|^2,$$
we conclude  that
\begin{equation}\label{eq4.39}
\int_{\Omega}|\nabla u_q|^2\to\int_{\Omega}|\nabla u|^2 \ \mbox{as}\  q\to 2.
\end{equation}
Then  by \eqref{eq3.1} and \eqref{eq4.39},  we obtain
\begin{equation*}
1\leq C_1 \|u\|^{p-2}+ C_2\|u\|^{2}.
\end{equation*}
Hence $u\neq 0.$

Now we shall prove $u^{\pm}\neq 0.$  Indeed, by \eqref{eq4.39}, we have
\begin{equation}\label{eq4.40}
\lim \limits_{q\searrow 2}\|u_q^{\pm}\|=\|u^{\pm}\|.
\end{equation}
Then it suffices to show
\begin{equation}\label{eq4.41}
\lim_{q\searrow 2}\|u_q^{\pm}\|^2>0.
\end{equation}
We argue  by contradiction. Without loss of generality, suppose on the contrary that there is a sequence $(q_n)_{n\geq 1}\subset (2,3)$ such that
$q_n\searrow2$ as $n\to\infty$, and
\begin{equation}\label{eq4.42}
\lim\limits_{n\to\infty}\|u_{q_n}^-\|=0.
\end{equation}
Let $v_{q_n}=\frac{u_{q_n}^-}{\|u_{q_n}^-\|}.$ Then up to a subsequence, $v_{q_n}\rightharpoonup v$ in $H_0^1(\Omega)$ for some $v\in H_0^1(\Omega)$ as $n\to\infty.$
In addition,   $u_{q_n}\in \mathcal{N}_{nod,q_n}$ implies that
\begin{equation}\label{eq4.43}
1=\|u_{q_n}^-\|^{p-2}\int_{\Omega}|v_{q_n}|^p
+\|u_{q_n}^-\|^{q_n-2}\int_{\Omega}\phi_{u_{q_n}}|v_{q_n}|^{q_n}.
\end{equation}
Since $p,q_n>2,$ we infer from \eqref{eq4.42}  that for $n$ large enough,
$$\|u_{q_n}^-\|^{p-2}\int_{\Omega}|v_{q_n}|^p\leq \frac{1}{2}\quad
\mbox{and}\quad
\|u_{q_n}^-\|^{q_n-2}\leq 1.$$
Then by \eqref{eq4.43}, we have
\begin{equation}\label{eq4.44}
\int_{\Omega}\phi_{u_{q_n}}|v_{q_n}|^{q_n}\geq \frac{1}{2}\|u_{q_n}^-\|^{2-q_n}\geq \frac{1}{2}.
\end{equation}
On the other hand, by (i) of Proposition \ref{lem1.1} and  the uniform boundness of $\|u_{q_n}\|$, we obtain
\begin{equation}\label{eq4.45}
\int_{\Omega}\phi_{u_{q_n}} |v_{q_n}|^{q_n}\leq C_1 \|u_{q_n}\|^{q_n} \left(\int_{\Omega}|v_{q_n}|^{\frac{6q_n}{5}}\right)^{\frac{5}{6}}\leq C\left(\int_{\Omega}|v_{q_n}|^{\frac{6q_n}{5}}\right)^{\frac{5}{6}}
\end{equation}
for some $C>0$ independent of $n.$ Then we can deduce from \eqref{eq4.44} and \eqref{eq4.45} that
\begin{equation}\label{eq4.46}
\liminf\limits_{n\to \infty}\int_{\Omega}|v_{q_n}|^{\frac{6q_n}{5}}\geq(\frac{1}{2C})^{\frac{6}{5}}.
\end{equation}
Since $q_n\in (2,3)$, for each $n$, we have
$$\int_{\Omega}|v_{q_n}-v|^{\frac{6q_n}{5}}\leq\int_{\Omega}|v_{q_n}-v|^{\frac{12}{5}}
+\int_{\Omega}|v_{q_n}-v|^{\frac{18}{5}}.$$
This, combined with Rellich-Kondrachov compactness theorem, implies
$$\begin{array}{lll}
\big{|}\int_{\Omega}|v_{q_n}|^{\frac{6q_n}{5}}-\int_{\Omega}|v|^{\frac{12}{5}}\big{|}
&\leq& \int_{\Omega}|v_{q_n}-v|^{\frac{6q_n}{5}}
+\int_{\Omega}(|v|^{\frac{6q_n}{5}}-|v|^{\frac{12}{5}})\\
&\leq& \int_{\Omega}|v_{q_n}-v|^{\frac{12}{5}}+
\int_{\Omega}|v_{q_n}-v|^{\frac{18}{5}}+\int_{\Omega}(|v|^{\frac{6q_n}{5}}-|v|^{\frac{12}{5}})\\
&\to &0\quad \mbox{as $n\to\infty.$}
\end{array}$$
Thus we infer from \eqref{eq4.46} that
\begin{equation}\label{eq4.47}
\int_{\Omega}|v(x)|^{\frac{12}{5}}dx>0.
\end{equation}
Note that
$\{x\in \Omega: v_{q_n}(x)<0\}=\{x\in \Omega: u_{q_n}(x)<0\}$.
Since $v_{q_n}(x)\to v(x)$ and $u_{q_n}(x)\to u(x)$ a.e. in $\Omega$ as $n\to \infty,$ we have
$$\{x\in \Omega: v(x)<0\}\subset \{x\in \Omega: u(x)\leq 0\}.$$
Then it follows from \eqref{eq4.47} that
\begin{equation*}
meas\{x\in \Omega: u(x)\leq 0\}\neq 0,
\end{equation*}
where $meas$ denotes the Lebesgue measure. This, combined with \eqref{eq4.40} and \eqref{eq4.42},  implies
\begin{equation}\label{eq4.48}
meas\{x\in \Omega: u(x)=0\}\neq 0.
\end{equation}
Moreover, since $(u,\phi_u)$ is a weak solution of \eqref{eq1.1}, by the elliptic regularity argument, we have $u\in C^2(\bar{\Omega}).$ Then,  by strong maximum principle, we derive that either $u\equiv 0$ or $u>0.$ Therefore it follows from \eqref{eq4.48} that $u\equiv 0,$ which contradicts  with the fact that $u\neq 0.$
Thus $u^-\neq 0.$ Similarly, we can conclude $u^+\neq 0.$ Therefore, \eqref{eq4.41} follows.

Finally, by the arguments above, we have shown that  $u$ is a nontrivial critical point of $I_2$ and $u^{\pm}\neq 0.$  Then $u\in \mathcal{N}_{nod,2}$. This combined with \eqref{eq4.38} and \eqref{eq4.39}, implies
$$m_{nod,2}\leq I_2(u)=\lim\limits_{q\to 2}I_q(u_q)\leq m_{nod,2}.$$ Hence,
$u$ is a least energy nodal solution of \eqref{eq1.1} for $q=2.$

\noindent\textbf{Case 3.} \emph{Nonexistence for $q\in(1,2).$}

Note that for any $u\in \mathcal{N}_q$,  we have  $u_q\in H_0^1(\Omega)$ and  $I_q(u)=I_q(|u|)$.   Then
$$m_{q}=\inf \left\{I_q(u):  u\in \mathcal{N}_q, u\geq 0\ \mbox{a.e. in}\ \Omega\right\}.$$
For any $u\in \mathcal{N}_q$ with $u\geq 0,$  we can find a sequence $(u_n)_{n\geq1}\subset C_c^{\infty}(\Omega)$ satisfying  $u_n\geq 0$ and $u_n\to u$ in $H^1_0(\Omega)$ as $n\to\infty.$
By (iii) of Lemma \ref{nehariproperty}, for each $n$, there exists a unique  $s_n>0$ such that $s_n u_n\in \mathcal{N}_q$ and $s_n\to 1$ as $n\to\infty.$ So there holds
\begin{equation*}
m_{q}=\inf \left\{I_q(u): u\in C_c^{\infty}(\Omega)\cap \mathcal{N}_q, u\geq 0\ \mbox{in}\ \Omega\right\}.
\end{equation*}

Let us take a function $u\in C_c^{\infty}(\Omega)\cap \mathcal{N}_q$ with $u\geq 0$ in $\Omega.$ Without loss of generality, we assume that
$$\mbox{dist}(supp\ u,\partial \Omega)=2\tilde{\delta}$$
for some $\tilde{\delta}>0.$ Let $a\in \Omega$ be such that $B_{\tilde{\delta}/2}(a)\subset \Omega\backslash supp\ u.$
We define a family of functions $u_{\delta}:\Omega\to \mathbb{R}$ with $\delta \in (0,\tilde{\delta}/2)$ by
$$u_{\delta}(x)=u(x)-\delta^{\frac{2}{2-q}}\eta\big{(}\frac{x-a}{\delta}\big{)}\quad \mbox{for any } x\in \Omega,$$
where $\eta\in C_c^{\infty}(\mathbb{R}^3,\mathbb{R})$ is a cut-off function such that $\eta(x)=1$ in $B_1(0)$ and $\eta(x)=0$ in $\mathbb{R}^3\backslash B_2(0)$ with $0\leq \eta\leq 1.$
Obviously, $u_{\delta}^+=u,$ $u_{\delta}^-=-\delta^{\frac{2}{2-q}}\eta(\frac{\cdot-a}{\delta})\in H_0^1(\Omega)$ and $u_{\delta} \to u$ in $H_0^1(\Omega)$ as $\delta\to 0$ due to the fact that $q\in (1,2).$
Furthermore, by direct calculation, we have
\begin{equation}\label{eq4.49}
\begin{array}{lll}
\int_{\Omega} \phi_{u_{\delta}}(x) |u_{\delta}^+(x)|^qdx&=&
\int_{\Omega} \phi_{t_q u-s_q\delta^{\frac{2}{2-q}}\eta(\frac{\cdot-a}{\delta})}(x) |t_q u(x)|^qdx\\
&=&t_q^q\int_{\Omega} \phi_u(x)|t_q u(x)-s_q \delta^{\frac{2}{2-q}}\eta(\frac{x-a}{\delta})|^qdx\\
&=&t_q^{2q}\int_{\Omega} \phi_u|u|^q+\delta^{\frac{2q}{2-q}}t_q^q s_q^q\int_{\Omega}\phi_u\eta^q(\frac{x-a}{\delta})dx\\
&=&t_q^{2q}\int_{\Omega} \phi_u|u|^q+\delta^{\frac{2q}{2-q}+3}t_q^q s_q^q \int_{B_2(0)} \phi_u(\delta x+a)\eta^q(x)dx
\end{array}
\end{equation}
and
\begin{equation}\label{eq4.50}
\begin{array}{lll}
\int_{\Omega} \phi_{u_{\delta}}(x) |u_{\delta}^-(x)|^qdx
&=&
\int_{\Omega} \phi_{t_q u-s_q\delta^{\frac{2}{2-q}}\eta(\frac{\cdot-a}{\delta})}(x) |s_q\delta^{\frac{2}{2-q}}\eta(\frac{x-a}{\delta})|^q dx\\
&=&\delta^{\frac{2q}{2-q}}t_q^q s_q^q\int_{\Omega} \phi_u(x)\eta^q(\frac{x-a}{\delta})dx\\
& & +\delta^{\frac{4q}{2-q}}s_q^{2q}\int_{\Omega} \phi_{\eta(\frac{\cdot-a}{\delta})}(x) \eta^q(\frac{x-a}{\delta})dx\\
&=& \delta^{\frac{2q}{2-q}+3}t_q^q s_q^q\int_{B_2(0)} \phi_u(\delta x+a)\eta^q(x)dx\\
& &+\delta^{\frac{4q}{2-q}+3}s_q^{2q}\int_{B_2(0)} \phi_{\eta(\frac{\cdot-a}{\delta})}(\delta x+a) \eta^q(x)dx.\end{array}
\end{equation}

 We claim that for $\delta>0$ small enough, there exist $t_q,s_q\in(0,\infty)$ such that
\begin{equation}\label{eq4.51}
t_q u_{\delta}^+ +s_q u_{\delta}^-\in \mathcal{N}_{nod,q}.
\end{equation}
Indeed,  \eqref{eq4.51} holds
if and only if
\begin{equation}\label{eq4.52}
I_q'(t_q u_{\delta}^+ +s_q u_{\delta}^-)t_q u_{\delta}^+=0
\end{equation}
and
\begin{equation}\label{eq4.53}
I_q'(t_q u_{\delta}^+ +s_q u_{\delta}^-)s_q u_{\delta}^-=0.
\end{equation}
Since $I_q'(u)u=0,$ we deduce from \eqref{eq4.49} and \eqref{eq4.52} that
\begin{equation}\label{eq4.55}\begin{array}{lll}
0&=(t_q^2-t_q^p)\int_{\Omega} |u|^p +(t_q^2- t_q^{2q})\int_{\Omega} \phi_u |u|^q - \delta^{\frac{2q}{2-q}+3}t_q^q s_q^q \int_{B_2(0)} \phi_u(\delta x+a)\eta^q(x)dx\\
&=:G^1(\delta,t_q,s_q).
\end{array}\end{equation}
Moreover, by \eqref{eq4.50} and \eqref{eq4.53}, we have
\begin{equation}\label{eq4.54}\begin{array}{rll}
0&=s_q^2 \int_{B_2(0)}|\nabla \eta|^2
-s_q^p\delta^{\frac{2(p-q)}{2-q}}\int_{B_2(0)} |\eta|^p-t_q^q s_q^q\int_{B_2(0)} \phi_u(\delta x+a)\eta^q(x)dx\\
&\quad-s_q^{2q}\delta^{\frac{2q}{2-q}} \int_{B_2(0)}\phi_{\eta(\frac{\cdot-a}{\delta})}(\delta x+a)\eta^q(x)dx\\
&=:G^2(\delta,t_q,s_q).
\end{array}\end{equation}

Now, we define a new vector function $(\tilde{G}^1,\tilde{G}^2): [0,+\infty)\times (0,+\infty)^2\to \mathbb{R}^2$ by
\begin{equation*}
(\tilde{G}^1(\delta,t_q,s_q),\tilde{G}^2(\delta,t_q,s_q))=
\left\{\begin{array}{lll}
(G^1(\delta,t_q,s_q),G^2(\delta,t_q,s_q))\ &\mbox{if}&\ \delta>0,\\
(H^1(t_q,s_q),H^2(t_q,s_q))\ &\mbox{if}&\ \delta=0,
\end{array}\right.\end{equation*}
where
$$\begin{array}{lll}
H^1(t_q,s_q)&=(t_q^2-t_q^p)\int_{\Omega} |u|^p +(t_q^2-t_q^{2q})\int_{\Omega} \phi_u |u|^q,\\
H^2(t_q,s_q)&=s_q^2 \int_{B_2(0)}|\nabla \eta|^2 -t_q^q s_q^q\phi_u(a)\int_{B_2(0)}\eta^q(x)dx.
\end{array}$$
Since $\eta(\frac{\cdot-a}{\delta})\in C_c^{\infty}(\Omega)$, the standard elliptic regularity argument shows that $\phi_{\eta(\frac{\cdot-a}{\delta})}\in C^2(\overline{\Omega}).$ Note that  $0\leq \eta\leq 1.$  Then it follows that
$$\mbox{$|\phi_{\eta(\frac{\cdot-a}{\delta})}|\leq C_4$ and $|\int_{B_2(0)} \phi_{\eta(\frac{\cdot-a}{\delta})}(\delta x+a) \eta^q(x)dx|\leq C_5$,}$$
for some $C_4,C_5>0$ independent of $\delta\in (0,\tilde{\delta}/2).$
Thus
$$G^1(\delta,t_q,s_q)\to H^1(t_q,s_q)\ \mbox{and}\ G^2(\delta,t_q,s_q)\to H^2(t_q,s_q)\ \mbox{as}\ \delta\to 0.$$
So we can conclude that $(\tilde{G}^1,\tilde{G}^2)$ is continuous in $[0,+\infty)\times (0,+\infty)^2$.
In addition,  $H^1(t_q,s_q)=H^2(t_q,s_q)=0$
has a unique solution
$$(t^*_q,s^*_q)=\left(1,\left(\frac{\phi_u(a)\int_{B_2(0)} |\eta|^q}{\int_{B_2(0)}|\nabla \eta|^2}\right)^{1/(2-q)}\right),$$
and
$$\begin{array}{lll}
\det\left(\begin{array}{lll}
\frac{\partial \tilde{G}^1}{\partial t_q} & \frac{\partial \tilde{G}^1}{\partial s_q} \\
\frac{\partial \tilde{G}^2}{\partial t_q} & \frac{\partial \tilde{G}^2}{\partial s_q}
\end{array}\right)(0,t^*_q,s^*_q)
=\det\left(\begin{array}{lll}
\frac{\partial H^1}{\partial t_q} & \frac{\partial H^1}{\partial s_q} \\
\frac{\partial H^2}{\partial t_q} & \frac{\partial H^2}{\partial s_q}
\end{array}\right)(t^*_q,s^*_q)\\
=(2-q)\left(\int_{\Omega}|\nabla \eta|^2\right)^{\frac{1-q}{2-q}}
\left(\phi_u(a)\int_{\Omega}|\eta|^q\right)^{1/(2-q)}\bigg[(2-p)\int_{\Omega}|u|^p
+(2-2q)\int_{\Omega}\phi_u |u|^q\bigg]\\
\neq 0.
\end{array}$$
Then by applying the implicit function theorem to $(\tilde{G}^1,\tilde{G}^2)$ at $(0,t^*_q,s^*_q)$, we know that for $\delta>0$ small enough, there exists unique $(t_q(\delta),s_q(\delta))$ satisfying \eqref{eq4.55}, \eqref{eq4.54} and
\begin{equation*}
(t_q(\delta),s_q(\delta))\to (t^*_q,s^*_q)\quad \mbox{as $\delta\to 0.$}
\end{equation*}
Hence \eqref{eq4.51} follows  and  the claim holds.

By direct calculations,  we deduce that $t_q(\delta)u_{\delta}^+ +s_q(\delta)u_{\delta}^-\to u$ in $H_0^1(\Omega)$ as $\delta\to 0,$ and
$$\inf_{v\in \mathcal{N}_{nod,q}} I_q(v) \leq \lim\limits_{\delta\to 0} I_q(t_q(\delta)u_{\delta}^+ +s_q(\delta)u_{\delta}^-)=I_q(u)$$
which implies $m_{nod,q}\leq m_{q}.$ This combined with $m_{nod,q}\geq m_{q}$ yields that \begin{equation}\label{eq4.56}
m_{nod,q}=m_{q}.
\end{equation}
If there exists $w\in \mathcal{N}_{nod,q}$ such that $I_q(w)=m_{nod,q},$ in view of the fact that  $\mathcal{N}_{nod,q}\subset \mathcal{N}_q$ and \eqref{eq4.56},  $w$ can be also viewed as a minimizer of $I_q$ over Nehari manifold $\mathcal{N}_q.$  From Theorem \ref{prop} (ii), we infer that either $w>0$ or $w<0,$ which contradicts with the assumption $w\in \mathcal{N}_{nod,q}$.

Therefore, there is no least energy nodal solution for $q\in(1,2).$ The proof is complete.
\hfill{\QEDopen}

\section{$p=2$: linear local perturbtion}
In this section, we are devoted to the proof of Theorems \ref{zhdl-1} and \ref{zhdl-2}. Let $q\in (1,5)$. Without loss of generality, we assume $\mu=1$  in \eqref{eq1.1} and consider the following equation
\begin{equation}\label{xianxing}\left\{\begin{array}{rll}
-\Delta u&=\lambda u+\phi(x)|u|^{q-2}u\\
-\Delta \phi&=|u|^q\\
u&=\phi=0
\end{array}\right.
\begin{gathered}\begin{array}{rll}
&\mbox{in}\ \Omega,\\
&\mbox{in}\ \Omega,\\
&\mbox{on}\ \partial\Omega.
\end{array}\end{gathered}\end{equation}
where $\lambda$ is a constant.
\subsection{Proof of Theorem \ref{zhdl-1}}
For  $\lambda\in(-\infty,\lambda_1)$, it is easy to see that
$$\|u\|:=\int_{\Omega}|\nabla u|^2-\lambda\int_{\Omega}|u|^2$$
is an equivalent norm to the usual one in $H_0^1(\Omega)$. Then by similar arguments of the proof of Theorems \ref{prop} and \ref{theoremmain}, we can prove Theorem \ref{zhdl-1}. Here we omit the details.

\subsection{Proof of Theorem \ref{zhdl-2}}
In order to to prove Theorem \ref{zhdl-2}, we first introduce the method of generalized Nehari manifold, as stated in \cite{Szulkin-Weth} and \cite{Szulkin-Weth2009}.

Let $E$ be a Hilbert space with an orthogonal decomposition
$$E=E_+\oplus E_0 \oplus E_-=E_+\oplus F,$$
where $\operatorname{dim} E_0<\infty.$ Here and hereafter, we write
$$u=u_++u_0+u_-,\quad u_{\pm}\in E_{\pm},\ u_0\in E_0$$
and
$$\hat{E}(u):=\mathbb{R}_0^+u\oplus E_0\oplus E_-.$$

\begin{Proposition}\label{generanehari}(\cite{Szulkin-Weth}[Corollary 33])
Suppose that functional $I\in C^1(E,R)$ satisfies
\item{( $B_1$ )} $I(u)=\frac{1}{2}\|u_+\|-\frac{1}{2}\|u_-\|-\psi(u),$ where $\psi(0)=0$,  $\psi$ is weakly lower semi-continuous, and $\frac{1}{2}\psi'(u)u>\psi(u)>0$ for all $u\neq 0$;
    \item{( $B_2$ )} for any $w\in E_0\oplus E_-$,  there always exists $\hat{m}(w)\in\hat{E}(w)$ such that $\hat{m}(w)$ is a critical point of $I|_{\hat{E}(w)}$. Moreover, $\hat{m}(w)$ is the unique global maximum of $I|_{\hat{E}(w)}$;
    \item{( $B_3$ )} there exists $\delta>0$ such that for any $w\in E\backslash F,$ $\|\hat{m}(w)^+\|\geq \delta$, and for each compact set $W\subset\subset E\backslash F$, there is $C_{W}>0$ such that  $\|\hat{m}(w)\|\leq C_{W},\quad \forall w\in W$.
Let
    $$S^+=\{u\in E_+: \|u\|_E=1\},$$
    $$\Psi: S^+\to \mathbb{R}, \Psi(w):=I(\hat{m}(w))$$
    $$M:=\{u\in E_0\oplus E_-: I'(u)u=0\ \mbox{and}\ I'(u)v=0,\ \forall v\in E_0\oplus E_-\}$$
    and
    $$c=\inf\limits_{M} I(u)$$
    be the least energy level.
        Then the following statements hold.
        \item{(i)} $\Psi\in C^1(S^+, \mathbb{R}),$
      $$\Psi'(w)z=\|m(w)^+\|I'(\hat{m}(w))z,\quad \forall z\in T_w(S^+);$$
       \item{(ii)}
        If $(w_n)_{n\geq 1}\subset S^+$ be a P.S. sequence of $\Psi$, then $\hat{m}(w_n)$ is also a P.S. sequence of $I$. Furthermore, if    $(u_n)_{n\geq 1}\subset M$ is a bounded P.S. sequence of $I$, then $\frac{\hat{m}^{-1}(u_n)}{\|\hat{m}^{-1}(u_n)\|}$ is a P.S. sequence of $\Psi$;
            \item{(iii)} $w\in S^+$ is a critical point of $\Psi$ if and only if $\hat{m}(w)$ is a critical point of $I$; moreover,  $\Psi(w)=I(\hat{m}(w))$ and $\inf_{S^+} \Psi=\inf_{M} I;$
         \item{(iv)} $$c=\inf_M I=\inf\limits_{w\in E_0\oplus E_-}\max\limits_{u\in \hat{E}(w)}I(u)=\inf\limits_{u\in S^+}\max\limits_{u\in \hat{E}(w)}I(u).$$
\end{Proposition}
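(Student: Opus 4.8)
This result is the abstract generalized Nehari manifold machinery of Szulkin and Weth, and the plan is to reproduce its proof by reducing the constrained minimization over $M$ to the minimization of $\Psi=I\circ\hat m$ on the sphere $S^+$. Throughout, for $w\in S^+$ I would write $\hat m(w)=\tau(w)\,w+f(w)$ with $f(w)\in F:=E_0\oplus E_-$ and $\tau(w)=\|\hat m(w)^+\|\ge\delta>0$ by $(B_3)$, the point being that every element of $\hat E(w)=\mathbb R_0^+w\oplus F$ has its $E_+$–component equal to a nonnegative multiple of $w$. First I would show that $\hat m|_{S^+}$ is a bijection onto $M$ with inverse $u\mapsto u^+/\|u^+\|$: that $\hat m(w)\in M$ holds because a critical point of $I|_{\hat E(w)}$ annihilates $I'$ in the directions $w$ and $F$, hence also on $\hat m(w)=\tau(w)w+f(w)$ itself; surjectivity and injectivity then follow from the uniqueness clause in $(B_2)$ applied with $w=u^+/\|u^+\|$.

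Next I would prove that $\hat m$ is continuous. Given $w_n\to w$ in $S^+$, the bound in $(B_3)$ over the compact set $\{w\}\cup\{w_n\}$ keeps $(\hat m(w_n))$ bounded and, writing $\hat m(w_n)=\tau(w_n)w_n+f(w_n)$ with $\tau(w_n)\in[\delta,C_W]$, a subsequence gives $\tau(w_n)\to\tau_*$ and $f(w_n)\rightharpoonup f_*$, so $\hat m(w_n)\rightharpoonup\bar u:=\tau_* w+f_*\in\hat E(w)$. Using the weak lower semicontinuity of $\psi$ from $(B_1)$ together with the maximality of $\hat m(w)$ on $\hat E(w)$, I would identify $\bar u=\hat m(w)$ by uniqueness; a standard weak–to–strong upgrade (the $E_+$–component converges strongly because $w_n\to w$, and matching of energies handles the $F$–component) then gives $\hat m(w_n)\to\hat m(w)$ in $E$, and subsequence–independence of the limit upgrades this to full continuity.

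The core step will be part $(i)$: $\Psi\in C^1$ with $\Psi'(w)z=\tau(w)\,I'(\hat m(w))z$. I would avoid proving that $\hat m$ is itself $C^1$ and instead use a two–sided squeeze. For $w,w'\in S^+$ the competitors $\tau(w)w'+f(w)\in\hat E(w')$ and $\tau(w')w+f(w')\in\hat E(w)$ are admissible, so the maximality in $(B_2)$ gives
\[I\bigl(\tau(w)w'+f(w)\bigr)-I\bigl(\tau(w)w+f(w)\bigr)\le\Psi(w')-\Psi(w)\le I\bigl(\tau(w')w'+f(w')\bigr)-I\bigl(\tau(w')w+f(w')\bigr).\]
Taking $w'=w(t)$ to be a $C^1$ curve on $S^+$ with $w(0)=w$ and $\dot w(0)=z$, applying the mean value theorem to $I$ along the two segments and letting $t\to0$, the continuity of $\hat m$ (hence of $\tau$ and $f$) collapses both outer bounds to $\tau(w)\,I'(\hat m(w))z$; continuity of $w\mapsto\Psi'(w)$ then follows from continuity of $\hat m$ and of $I'$. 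This squeeze, which delivers $C^1$ regularity without any differentiability of $\hat m$, is where the real work lies and will be the main obstacle.

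Finally I would deduce $(iii)$, $(ii)$ and $(iv)$. For $(iii)$, since $\tau(w)\ge\delta>0$ the formula in $(i)$ shows $\Psi'(w)=0$ is equivalent to $I'(\hat m(w))z=0$ for all $z\in T_wS^+$; combining this with the fact that a critical point of $I|_{\hat E(w)}$ already has $I'(\hat m(w))$ vanishing on $\mathbb R w\oplus F$, and that $E_+=\mathbb R w\oplus T_wS^+$, yields $I'(\hat m(w))=0$ on all of $E$, the converse being immediate; the equality $\inf_{S^+}\Psi=\inf_M I$ is then just the bijection $\hat m$ together with $\Psi=I\circ\hat m$. For $(ii)$, the identity $\|\Psi'(w_n)\|=\tau(w_n)\,\|I'(\hat m(w_n))|_{T_{w_n}S^+}\|$ with $\tau(w_n)\in[\delta,C_W]$, used together with the vanishing of $I'(\hat m(w_n))$ in the complementary $\hat E(w_n)$–directions, transfers Palais–Smale sequences in both directions. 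For $(iv)$, since $\max_{\hat E(w)}I=I(\hat m(w))=\Psi(w)$ and $\hat E(u)$ depends only on the ray through $u^+$, all three minimax quantities reduce to $\inf_{S^+}\Psi=c$.
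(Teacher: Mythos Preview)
The paper does not give its own proof of this proposition: it is quoted verbatim as \cite{Szulkin-Weth}[Corollary~33] and then simply invoked in the subsequent lemmas and in the proof of Theorem~\ref{zhdl-2}. So there is nothing in the paper to compare your argument against beyond the statement itself.

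That said, your sketch is exactly the Szulkin--Weth argument and is sound in outline. A couple of small remarks on places where you should tighten the write-up if you want it to stand alone. First, in your continuity step the phrase ``a standard weak--to--strong upgrade'' hides the only nontrivial point: after identifying the weak limit $\bar u=\hat m(w)$ via the uniqueness in $(B_2)$, you need to squeeze the energies, i.e.\ use $I(\hat m(w_n))\ge I(\tau(w)w_n+f(w))\to I(\hat m(w))$ from maximality together with the weak lower semicontinuity of $\psi$ to force $\psi(\hat m(w_n))\to\psi(\hat m(w))$ and $\|(\hat m(w_n))_-\|\to\|\hat m(w)_-\|$; only then does strong convergence of the $F$--component follow. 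Second, in part $(ii)$ the backward direction (bounded PS sequence for $I$ in $M$ yields a PS sequence for $\Psi$) needs the uniform lower bound $\tau(w_n)\ge\delta$ from $(B_3)$ to pass from $\|I'(\hat m(w_n))|_{T_{w_n}S^+}\|\to 0$ back to $\|\Psi'(w_n)\|\to0$; the upper bound $C_W$ is not automatically available here since the sequence need not lie in a fixed compact set, but boundedness of $(u_n)$ gives $\tau(w_n)=\|u_n^+\|$ bounded directly. With these two clarifications your proof is complete and matches the cited source.
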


Now we consider \eqref{xianxing}.
Suppose that there is some $1\leq k<m$ such that $\lambda_k<\lambda=\lambda_{k+1}=\cdots=\lambda_m<\lambda_{m+1}$.  Note that $m$ could be equal to $k$, and in this case we assume $\lambda_k<\lambda<\lambda_{k+1}$.

Let
$$E=E_+\oplus E_0\oplus E_-$$
 be the orthogonal decomposition
corresponding to the spectrum of $-\Delta-\lambda$ in $E$. That is,
$$E_-=span\{e_1,\cdots,e_k\}\quad\mbox{and}\quad E_0=span\{e_{k+1},\cdots,e_m\}.$$
So $u=u_++u_0+u_-\in E_+\oplus E_0\oplus E_-,$ and there is an equivalent norm $\|\cdot\|$ in $E$ such that
$$\int_{\Omega}(|\nabla u|^2-\lambda u^2)dx=\|u_+\|^2-\|u_-\|^2.$$
Thus, the functional of \eqref{xianxing}
$$I_{q}(u)=\frac{1}{2}\int_{\Omega}(|\nabla u|^2-\lambda u^2)dx-\frac{1}{2q}\int_{\Omega}\phi_u |u|^qdx$$
can be written as
$$I_{q}(u)=\frac{1}{2}\|u_+\|^2-\frac{1}{2}\|u_-\|^2-\frac{1}{2q}\int_{\Omega}\phi_u |u|^qdx.$$

Define a generalized Nehari manifold
$$M:=\{u\in H_0^1(\Omega)\backslash\{0\}: I_{q}'(u)u=0, I_{q}'(u)v=0, \forall v\in E_-\}$$
and the infimum
$$c_0:=\inf\limits_{M} I_{q}(u).$$
It is easy to see that when $E_-=\{0\}$,  $M$ is the usual Nehari manifold. Moreover, if $u\neq 0$ satisfies $I_{q}'(u)=0$,  then $I_{q}(u)=I_{q}(u)-\frac{1}{2}I_{q}'(u)u=(\frac{1}{2}-\frac{1}{2q})\int_{\Omega}\phi_u |u|^qdx>0.$
Note that $I_{q}(u)\leq 0$ for $u\in E_0\oplus E_-.$  Then all critical points of $I_{q}$ belong to generalized Nehari manifold $M$. Thus $M$ is a natural generalization of the standard Nehari manifold.

\begin{Lemma}\label{jidazhi}
If $u\in M$,  then for any $0\neq w\in Z:=\{su+v: s\geq -1, v\in E_0\oplus E_-\},$
 $$I_{q}(u+w)<I_{q}(u),$$
that is, $u$ is the unique global maximum of $I_{q}|_{\hat{E}(u)}$.
\end{Lemma}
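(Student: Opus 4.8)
The plan is to prove the equivalent statement that, writing each point of $\hat E(u)$ as $tu+v$ with $t\ge 0$ and $v\in E_0\oplus E_-$ (so that $u+w=tu+v$ with $t=1+s$, and $u+w\neq u$ iff $(t,v)\neq(1,0)$), the function $(t,v)\mapsto I_{q}(tu+v)$ has on $[0,\infty)\times(E_0\oplus E_-)$ a strict global maximum only at $(t,v)=(1,0)$. Throughout I abbreviate $\psi(w):=\frac{1}{2q}\int_\Omega\phi_w|w|^q$, so $I_{q}(w)=\frac12\|w_+\|^2-\frac12\|w_-\|^2-\psi(w)$. Two facts about $\psi$ are used repeatedly: by Proposition \ref{lem1.1}(ii) the relation $\phi_{\tau w}=\tau^q\phi_w$ makes $\psi$ positively homogeneous of degree $2q$, whence $\psi'(u)u=2q\,\psi(u)$; and by Lemma \ref{eq4.2} and Remark \ref{remarko} the quantity $\int_\Omega\phi_f|g|^q$ is a positive, symmetric, Cauchy--Schwarz pairing in $(|f|^q,|g|^q)$.

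First I would record the two identities coming from $u\in M$. Since $I_{q}'(u)u=0$, homogeneity gives $\|u_+\|^2-\|u_-\|^2=\psi'(u)u=2q\,\psi(u)$; and since $I_{q}'(u)v=0$ for every $v\in E_0\oplus E_-$, which has no $E_+$-component, one gets $\langle u_-,v_-\rangle=-\psi'(u)v$. Expanding the indefinite quadratic part of $I_{q}(tu+v)$ --- here the $E_0$-component of $v$ drops out of the quadratic form while the $E_-$-component produces the decisive negative term --- and substituting the two identities yields
\begin{equation*}
I_{q}(tu+v)-I_{q}(u)=\big[q(t^2-1)+1\big]\psi(u)+t\,\psi'(u)v-\tfrac12\|v_-\|^2-\psi(tu+v).
\end{equation*}
The lemma is thus reduced to showing the right-hand side is strictly negative for $(t,v)\neq(1,0)$.

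Next I would settle the model case $v=0$, which isolates the role of $q>1$: by homogeneity $I_{q}(tu)-I_{q}(u)=\psi(u)\,[qt^2-t^{2q}-q+1]$, and an elementary analysis of $g(t)=qt^2-t^{2q}-q+1$ (with $g(1)=0$ and $g'(t)=2qt(1-t^{2q-2})$) gives $g(t)<0$ for all $t\ge 0$, $t\neq1$, precisely because $2q-2>0$. For the general case I would recast the target as $P(t,v)+\tfrac12\|v_-\|^2>0$, where
\begin{equation*}
P(t,v)=D_\psi(tu+v,u)-q(t-1)^2\psi(u)-(t-1)\,\psi'(u)v,
\end{equation*}
and $D_\psi(tu+v,u):=\psi(tu+v)-\psi(u)-\psi'(u)(tu+v-u)$ is the second-order remainder of $\psi$ at $u$. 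The point is that $D_\psi\ge 0$ alone is insufficient: the remainder must be shown to dominate both the indefinite term $q(t-1)^2\psi(u)$ and the cross term $(t-1)\psi'(u)v$.

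This domination is the main obstacle, and it is exactly where the nonlocal character bites, since $\psi$ is not a pointwise integral. The plan is to control it through the positive bilinear structure: write $\int_\Omega\phi_w|w|^q$ as a quadratic form in $|w|^q$ and apply the Cauchy--Schwarz inequality of Remark \ref{remarko} to split the interactions between the $tu$- and $v$-profiles, then combine with the degree-$2q$ homogeneity, which forces the second variation $\psi''(u)[u,u]=2q(2q-1)\psi(u)$ to exceed, by a margin proportional to $2q-2>0$, the quadratic coefficient $2q\cdot\tfrac12$ produced by the indefinite form. The strictness of Cauchy--Schwarz, together with the strictly positive $\tfrac12\|v_-\|^2$ when $v_-\neq0$, upgrades this to a strict inequality for every $(t,v)\neq(1,0)$, including the delicate case $v=v_0\in E_0$ where the negative quadratic term is absent and strict convexity of $\psi$ must carry the argument alone. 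This is structurally parallel to the negative-definiteness of $\operatorname{D}^2 F^u$ obtained in \eqref{eq4.22}, the essential difference being that the quadratic part is now indefinite, so it is the strict positivity of the nonlocal second variation that has to be exploited.
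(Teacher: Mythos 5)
Your reduction is correct as far as it goes, and it is in fact cleaner than the paper's bilinear-form manipulation: using $I_{q}'(u)u=0$, $I_{q}'(u)v=0$ for $v\in E_0\oplus E_-$, and the $2q$-homogeneity $\psi'(u)u=2q\psi(u)$ from Proposition \ref{lem1.1}(ii), the identity $I_{q}(tu+v)-I_{q}(u)=[q(t^2-1)+1]\psi(u)+t\psi'(u)v-\tfrac12\|v_-\|^2-\psi(tu+v)$ checks out, and your ray case $v=0$ via $g(t)=qt^2-t^{2q}-q+1$ is complete and isolates the right mechanism ($q>1$). But the entire content of Lemma \ref{jidazhi} is the mixed case $v\neq 0$, and there your text is a plan, not a proof: you never produce the inequality chain showing that the remainder $D_\psi(tu+v,u)$ dominates $q(t-1)^2\psi(u)+(t-1)\psi'(u)v$. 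Moreover, the two tools you name do not do what you ask of them. The Cauchy--Schwarz inequality of Lemma \ref{eq4.2}/Remark \ref{remarko} applies to pairings $\int_\Omega \phi_f\, g$ with $f,g\geq 0$; your cross term $\psi'(u)v=\int_\Omega\phi_u|u|^{q-2}uv$ has the sign-changing density $|u|^{q-2}uv$, which is not of that form, and splitting $v$ into positive and negative parts destroys the decomposition $v\in E_0\oplus E_-$ on which the whole computation rests. And ``strict convexity of $\psi$'' is asserted, not proved: convexity of $w\mapsto\int_\Omega\phi_w|w|^q$ appears nowhere in the paper and is genuinely unclear for $1<q<2$; even granted, convexity yields only $D_\psi\geq 0$, which you yourself concede is insufficient. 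The delicate subcase $v\in E_0$ with $v_-=0$, where the helping term $\tfrac12\|v_-\|^2$ vanishes, is thus left resting entirely on an unproven and, by your own accounting, inadequate hypothesis.

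For comparison, the paper's proof avoids any second-variation or convexity estimate. After using $u\in M$ to convert $a(u,\cdot)$ into the nonlocal pairing $\int_\Omega\phi_u|u|^{q-2}u(\cdot)$ and absorbing $\tfrac{1}{2q}\int_\Omega\phi_u|u|^q$ by the elementary strict bound $\tfrac{1}{2q}<\tfrac12$, it splits on the sign of $uz$ with $z=(1+s)u+v$: when $uz\leq 0$ every remaining term, including $\tfrac12 a(v,v)\leq 0$ and $-\tfrac{1}{2q}\int_\Omega\phi_{u+w}|u+w|^q\leq 0$, is nonpositive; when $uz>0$ it maximizes the one-variable function $i(s)=I_{q}((1+s)u+v)-I_{q}(u)$ along the ray, with $i(-1)<0$ and $i(s)\to-\infty$, and uses the strict monotonicity of $t\mapsto \phi_t|t|^{q-2}t$ (the nonlocal analogue of the Szulkin--Weth condition that $f(t)/t$ be increasing) to pin the interior maximum at value $0$. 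To rescue your route you would have to actually prove the quantitative inequality $\psi(tu+v)\geq\psi(u)+\tfrac{t^2-1}{2}\psi'(u)u+t\psi'(u)v-\tfrac12\|v_-\|^2$, strict off $(t,v)=(1,0)$; this is precisely the nonlocal obstruction the introduction of the paper flags (the nonlocal term is not a pointwise integrand, so the classical pointwise arguments do not transfer), and your sketch does not supply it.
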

\begin{proof}
Set $w=su+v$ and $z=(1+s)u+v$,  where $s\geq 1, v\in E_0\oplus E_-.$

Let $a:E\times E\to \mathbb{R}$ be a symmetric bilinear functional defined as
$$a(u_1,u_2)=\int_{\Omega}\nabla u_1\nabla u_2 -\lambda u_1 u_2dx\quad\forall u_1, u_2\in E.$$
In view of $u\in M,$
\begin{equation}\label{xjian}\begin{array}{lll}
&&I_{q}(u+w)-I_{q}(u)\\
&=&\frac{1}{2}[a(u+su+v, u+su+v)-a(u,u)]+\frac{1}{2q}\int_{\Omega}\phi_u |u|^q-\frac{1}{2q}\int_{\Omega}\phi_{u+w} |u+w|^q\nonumber\\
&=&a(u,(\frac{s^2}{2}+s)u+(s+1)v)+\frac{1}{2}a(v,v)+\frac{1}{2q}\int_{\Omega}\phi_u |u|^q-\frac{1}{2q}\int_{\Omega}\phi_{u+w} |u+w|^q\nonumber\\
&=&\int_{\Omega}\phi_u |u|^{q-2}u[(\frac{s^2}{2}+s)u+(s+1)v]+\frac{1}{2}a(v,v)\nonumber\\
&\quad& +\frac{1}{2q}\int_{\Omega}\phi_u |u|^q-\frac{1}{2q}\int_{\Omega}\phi_{u+w} |u+w|^q \tag{\text{1}}\\
&<&\int_{\Omega}\phi_u |u|^{q-2}u[(\frac{s^2}{2}+s)u+(s+1)v+\frac{u}{2}]+\frac{1}{2}a(v,v)-\frac{1}{2q}\int_{\Omega}\phi_{u+w} |u+w|^q\nonumber\\
&=&\int_{\Omega}\phi_u |u|^{q-2}u[-\frac{(1+s)^2}{2}u+(1+s)z]+\frac{1}{2}a(v,v)-\frac{1}{2q}\int_{\Omega}\phi_{u+w} |u+w|^q.\nonumber
\end{array}\end{equation}
Thus, when $uz<0$,
\begin{equation}\label{uzxy0}
I_{q}(u+w)-I_{q}(u)<0.
\end{equation}
When $uz>0$, let
$$\begin{array}{lll}
i(s)&=I_{q}(z)-I_{q}(u)\\
&=\int_{\Omega}\phi_u |u|^{q-2}u[(\frac{s^2}{2}+s)u+(s+1)v]+\frac{1}{2}a(v,v)
+\frac{1}{2q}\phi_u |u|^q-\frac{1}{2q}\phi_{z}|z|^q.
\end{array}$$
It follows from  (1) that
$$i(-1)=(\frac{1}{2q}-\frac{1}{2})\phi_u |u|^q-\frac{1}{2q}\phi_{z}|z|^q<0,$$
and as $s\to +\infty$, $i(s)\to -\infty$.
Then there exists $s_0\in(-1,+\infty)$, a maximum point of $i$, such that
\begin{equation*}\label{}\begin{array}{lll}
0&=i'(s_0)\\
&=\phi_u |u|^{q-2}u[(1+s_0)u+v]-\frac{1}{2q}\phi_{z}|z|^{q-2}zu\\
&=(\phi_u |u|^{q-2}-\frac{1}{2q}\phi_{z}|z|^{q-2})zu.
\end{array}\end{equation*}
Since $uz>0$ and $\phi_t |t|^{q-2}$ is strictly monotone in $t$,
 then $u=z$ and  the maximum of $i$ is $i(s_0)=0.$
So, when $uz>0$,
\begin{equation}\label{uzdy0}
I_{q}(u+w)-I_{q}(u)<0.
\end{equation}
Therefore, from \eqref{uzxy0} and \eqref{uzdy0}, this lemma follows.
\end{proof}

By using Lemma \ref{jidazhi}, we have the following results.
\begin{Lemma}\label{lemgynehari2}
The following statements are true:
\item{(i)} there exist $\alpha>0$ such that $c_0=\inf_M I_{q}(u)\geq \alpha>0$;
\item{(ii)} for any $u\in M,$ there holds $\|u_+\|\geq \sqrt{2c_0}.$
\end{Lemma}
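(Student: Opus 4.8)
The plan is to first extract the identity valid on the generalized Nehari manifold, then use the maximization property of Lemma \ref{jidazhi} to reduce the minimization over $M$ to a scalar optimization that can be bounded below uniformly.

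First I would record that for any $u\in M$ the condition $I_q'(u)u=0$ reads $\|u_+\|^2-\|u_-\|^2=\int_\Omega\phi_u|u|^q$, so that
\[ I_q(u)=I_q(u)-\tfrac12 I_q'(u)u=\Big(\tfrac12-\tfrac1{2q}\Big)\int_\Omega\phi_u|u|^q. \]
Since $\int_\Omega\phi_u|u|^q=\int_\Omega|\nabla\phi_u|^2>0$ for $u\neq0$ and $q>1$, this already gives $I_q(u)>0$ pointwise on $M$; moreover it forces $u_+\neq0$, since $u_+=0$ would make $I_q(u)=-\tfrac12\|u_-\|^2-\tfrac1{2q}\int_\Omega\phi_u|u|^q\le0$. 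This settles positivity but not the uniform bound $\alpha>0$, which is the real content of (i).

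For the uniform lower bound --- the main obstacle --- I would exploit that, by Lemma \ref{jidazhi}, every $u\in M$ is the global maximum of $I_q$ over $\hat E(u)=\mathbb{R}_0^+u\oplus E_0\oplus E_-$. The key observation is that $tu_+\in\hat E(u)$ for every $t\ge0$, because $tu_+=tu+(-tu_0-tu_-)$ with $-tu_0-tu_-\in E_0\oplus E_-$. Hence $I_q(u)\ge\sup_{t\ge0}I_q(tu_+)$. Using $\phi_{tu_+}=t^q\phi_{u_+}$ from Proposition \ref{lem1.1}(ii) together with $u_+\in E_+$, so that $\|tu_+\|^2=t^2\|u_+\|^2$ and $\int_\Omega\phi_{tu_+}|tu_+|^q=t^{2q}\int_\Omega\phi_{u_+}|u_+|^q$, I obtain
\[ I_q(tu_+)=\frac{t^2}{2}\|u_+\|^2-\frac{t^{2q}}{2q}\int_\Omega\phi_{u_+}|u_+|^q. \]
Bounding the nonlocal term from above by $\int_\Omega\phi_{u_+}|u_+|^q\le C\|u_+\|^{2q}$ via Proposition \ref{lem1.1}(i) (which lowers the right-hand side, hence gives a valid lower bound) and maximizing the resulting function of $t$, the homogeneities are arranged so that the factors of $\|u_+\|$ cancel at the maximizer $t_0\sim\|u_+\|^{-1}$: one finds $\sup_{t\ge0}I_q(tu_+)\ge\big(\tfrac12-\tfrac1{2q}\big)C^{-1/(q-1)}=:\alpha$, a constant independent of $u$. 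Taking the infimum over $M$ yields $c_0\ge\alpha>0$, proving (i). The delicate point is precisely that the two terms in $t$ have different degrees ($2$ versus $2q$), so that the optimal scale depends on $\|u_+\|$ in exactly the way needed to make the final bound $u$-independent.

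Finally, (ii) is an immediate consequence of (i): for any $u\in M$ the two negative terms are nonpositive, so
\[ I_q(u)=\frac12\|u_+\|^2-\frac12\|u_-\|^2-\frac1{2q}\int_\Omega\phi_u|u|^q\le\frac12\|u_+\|^2. \]
Combining this with $I_q(u)\ge c_0$ gives $\|u_+\|^2\ge2c_0$, that is $\|u_+\|\ge\sqrt{2c_0}$. I do not expect any difficulty in this last step.
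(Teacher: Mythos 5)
Your proof is correct and follows essentially the same route as the paper's: both use Lemma \ref{jidazhi} to reduce to $I_{q}(u)\geq\sup_{t\geq0}I_{q}(tu_+)$ (noting $tu_+\in\hat{E}(u)$), bound the nonlocal term via Proposition \ref{lem1.1}(i), and optimize the resulting scalar function of $t$ to get a $u$-independent constant $\alpha$, with (ii) being the same one-line estimate $c_0\leq I_{q}(u)\leq\frac{1}{2}\|u_+\|^2$. Your explicit verification that $u_+\neq0$ on $M$ (left implicit in the paper) and the exact maximization in $t$ (the paper simply evaluates at a fixed radius) are minor refinements of the identical argument.
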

\begin{proof}
(i) For any $u\in M,$ it follows from Lemma \ref{jidazhi} that
$$I_{q}(u)\geq  I_{q}(su^+), \forall\ s>0.$$
Note that
$$I_{q}(su^+)=\frac{1}{2}\|su^+\|^2-\frac{1}{2q}\int_{\Omega}\phi_{su^+} |su^+|^qdx\\
\geq \frac{1}{2}\|su^+\|^2-\frac{C}{2q}\|su^+\|^{2q},$$
where $C$ is independent of $u$ and $s$. Then if $\|su^+\|=(\frac{q}{2C})^{\frac{1}{2(q-1)}}$, we have
\begin{equation}\label{su+}
I_{q}(su^+)\geq \frac{1}{2}(\frac{q}{2C})^{\frac{1}{2(q-1)}}.
\end{equation}
Let $\alpha=\frac{1}{2}(\frac{q}{2C})^{\frac{1}{2(q-1)}},$
then for any $u\in M$, we derive

$$I_{q}(u)\geq \frac{1}{2}(\frac{q}{2C})^{\frac{1}{2(q-1)}}.$$

Thus $c_0=\inf_M I_{q}(u)\geq \alpha>0$ and (i) follows.

(ii) If $u\in M,$ then
$$c_0\leq \frac{1}{2}(\|u_+\|^2-\|u_-\|^2)-\frac{1}{2q}\int_{\Omega}\phi_u |u|^q\leq \frac{1}{2}\|u_+\|^2.$$
Hence we obtain $\|u_+\|\geq \sqrt{2c_0}.$
\end{proof}

\begin{Lemma}\label{lemgynehari3}
For each $u\in E\backslash (E_0\oplus E_-),$ there exists a unique  $\hat{m}(u)\in M$ such that
$$I_{q}(\hat{m}(u))=\max\limits_{v\in\hat{E}(u)} I_{q}(v).$$
\end{Lemma}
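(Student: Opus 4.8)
The plan is to exploit the fact that $\hat E(u)$ is \emph{finite-dimensional}: since $u\in E\setminus(E_0\oplus E_-)$ forces $u_+\neq 0$, we have $\hat E(u)=\{t u_+ + w:\ t\ge 0,\ w\in E_0\oplus E_-\}$, a closed cone inside the linear space $L:=\mathbb{R}u_+\oplus E_0\oplus E_-$ of dimension at most $\dim E_0+\dim E_-+1$. Existence of a maximizer will then follow from continuity of $I_q$ together with the coercivity $I_q(v)\to-\infty$ as $\|v\|\to\infty$ in $\hat E(u)$, while the membership $\hat m(u)\in M$ and the uniqueness will be read off from the critical-point equations and from Lemma \ref{jidazhi} respectively.

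First I would establish the coercivity, which is the main technical point. Writing $v=\rho\bar v$ with $\|\bar v\|=1$ and using $\phi_{tw}=t^q\phi_w$ (so that $\mathcal D(v):=\int_\Omega \phi_v|v|^q$ is homogeneous of degree $2q$), one has $I_q(v)=\tfrac{\rho^2}{2}(\|\bar v_+\|^2-\|\bar v_-\|^2)-\tfrac{\rho^{2q}}{2q}\mathcal D(\bar v)$. Along any sequence with $\rho_n\to\infty$ I may pass, by finite-dimensionality, to a subsequence with $\bar v_n\to\bar v$, $\|\bar v\|=1$; since $\mathcal D(\bar v)=\int_\Omega|\nabla\phi_{\bar v}|^2=0$ would force $\bar v=0$, we get $\mathcal D(\bar v)>0$, and as $2q>2$ the superquadratic negative term dominates the at-most-quadratic first term, giving $I_q(v_n)\le \tfrac{\rho_n^2}{2}-c\,\rho_n^{2q}\to-\infty$. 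This uniform estimate is exactly where $q>1$ (hence $2q>2$) is used, and it handles at once the delicate directions $\bar v\in E_0$ (quadratic part zero) and $\bar v\in E_+$ (quadratic part positive). Combined with continuity on the closed finite-dimensional cone, $I_q$ attains its maximum on $\hat E(u)$.

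Next I would locate the maximizer. Because $I_q(su_+)\ge \tfrac12\|su_+\|^2-\tfrac{C}{2q}\|su_+\|^{2q}>0$ for small $s>0$ (as in Lemma \ref{lemgynehari2}), the maximum value over $\hat E(u)$ is strictly positive, whereas $I_q\le 0$ on the face $\{t=0\}=E_0\oplus E_-$ since there $v_+=0$. Hence a maximizer $v^*=t^*u_+ + w^*$ has $t^*>0$ and lies in the interior of the cone, so it is an interior critical point of $I_q|_L$: $I_q'(v^*)z=0$ for every $z\in L$, in particular $I_q'(v^*)u_+=0$ and $I_q'(v^*)v=0$ for all $v\in E_0\oplus E_-$. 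Consequently $I_q'(v^*)v^*=t^*I_q'(v^*)u_+ + I_q'(v^*)w^*=0$ and $I_q'(v^*)v=0$ for all $v\in E_-$, so $v^*\in M$; I set $\hat m(u):=v^*$.

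Finally, uniqueness is immediate from Lemma \ref{jidazhi}. Any global maximizer $v^*$ produced above lies in $M$ and satisfies $\hat E(v^*)=\hat E(u)$, since its positive part is the positive multiple $t^*u_+$ of $u_+$. Applying Lemma \ref{jidazhi} to $v^*\in M$ shows that $v^*$ is the \emph{unique} global maximum of $I_q$ on $\hat E(v^*)=\hat E(u)$; thus any two such maximizers coincide, yielding both the uniqueness of $\hat m(u)$ and the identity $I_q(\hat m(u))=\max_{v\in\hat E(u)}I_q(v)$. The only subtlety beyond the coercivity estimate is confirming that the boundary face contributes nothing, which follows at once from $\|v_+\|=0$ there.
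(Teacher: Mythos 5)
Your proposal is correct and follows essentially the same route as the paper's proof: you use finite-dimensionality of $\hat{E}(u)$ to get coercivity (the paper phrases this as $\beta:=\inf_{\|z\|=1,\,z\in\hat{E}(u)}\int_{\Omega}\phi_z|z|^q>0$ and the explicit bound $I_q(w)<0$ for $\|w\|\geq 2\tilde{C}$, while you argue by sequential compactness along $\rho_n\to\infty$, which is the same mechanism), then positivity of the maximum forces an interior critical point of $I_q|_{\hat{E}(u)}$, giving $\hat m(u)\in M$, with uniqueness read off from Lemma \ref{jidazhi} exactly as in the paper. Your explicit verification that $I_q\leq 0$ on the face $E_0\oplus E_-$ merely spells out a step the paper leaves implicit.
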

\begin{proof}
Given $u\in E\backslash (E_0\oplus E_-),$ since $\dim \hat{E}(u)<+\infty,$ we have
$$\beta:=\inf_{\substack{\|z\|=1\\ z\in \hat{E}(u)}}\int_{\Omega}\phi_z |z|^q >0.$$
Let
\begin{equation}\label{tildec}
\tilde{C}=(\frac{q}{\beta})^{\frac{1}{2(q-1)}}.
\end{equation}
Then for any $w\in \hat{E}(u)$ with $\|w\|\geq 2\tilde{C}$, we derive
\begin{equation}\label{2tildec}\begin{array}{lll}
I_{q}(w)&\leq \frac{1}{2}\|w\|^2-\frac{1}{2q}\int_{\Omega}\phi_w |w|^q\\
&\leq \frac{1}{2}\|w\|^2-\frac{1}{2q}\|w\|^{2q}\inf\limits_{\substack{\|z\|=1\\ z\in \hat{E}(u)}}\int_{\Omega}\phi_z |z|^q\\
&\leq \left(\frac{1}{2}-\frac{2^{2q-3}}{q}\tilde{C}^{2q-2}\beta\right)\|w\|^2\\
&=(2-2^{2q-1})\|w\|^2<0.
\end{array}\end{equation}
This together with Lemma \ref{lemgynehari2} (i), implies that there exists $u^*\in \{w\in \hat{E}(u): \|w\|\leq 2\tilde{C}\}$ such that $I_{q}(u^*)=\max\limits_{\hat{E}(u)}I_{q}(u)>0.$ This shows that  $u^*$ is a critical point of $I_{q}|_{\hat{E}(u)}$, and
$$\langle I_{q}'(u^*),u^*\rangle=\langle I_{q}'(u^*),v\rangle=0,\quad \forall\ v\in E_0\oplus E_-.$$
By Lemma \ref{jidazhi}, it follows that $\hat{m}(u):=u^*$ is the unique global maximum of $ I_{q}|_{\hat{E}(u)}$. The proof is complete.
\end{proof}

\begin{Lemma}\label{lemgynehari4-1}
Let $W\subset E\backslash (E_0\cup E_-)$ be a compact set. Then there exists $R_W>0$ such that  $I_{q}\leq 0$ in $\hat{E}(u)\backslash B_{R_W}(0)$ for any $ u\in W$.
\end{Lemma}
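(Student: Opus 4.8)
The plan is to bound $I_q$ from above on each slice $\hat{E}(u)$ by discarding the negative definite part and then exploiting the homogeneity of the nonlocal term. For $w\in\hat{E}(u)$, dropping $-\frac12\|w_-\|^2\le 0$ and using $\|w_+\|\le\|w\|$ (orthogonality of the decomposition) gives
\begin{equation*}
I_q(w)\le\frac12\|w\|^2-\frac1{2q}\int_\Omega\phi_w|w|^q.
\end{equation*}
By Proposition \ref{lem1.1}(ii) the map $w\mapsto\int_\Omega\phi_w|w|^q$ is positively homogeneous of degree $2q$, so with $\hat{w}=w/\|w\|$ one has $\int_\Omega\phi_w|w|^q=\|w\|^{2q}\int_\Omega\phi_{\hat{w}}|\hat{w}|^q$, and $\hat{w}\in\hat{E}(u)$ since $\hat{E}(u)$ is a cone. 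Thus the whole statement reduces to the uniform lower bound
\begin{equation*}
\beta_W:=\inf\left\{\int_\Omega\phi_z|z|^q:\ u\in W,\ z\in\hat{E}(u),\ \|z\|=1\right\}>0.
\end{equation*}

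To establish $\beta_W>0$, first note that since $W$ is compact and $u_+\ne0$ for every $u\in W$ (as $W\subset E\setminus(E_0\oplus E_-)$), the continuous map $u\mapsto\|u_+\|$ attains a positive minimum $m_0:=\inf_{u\in W}\|u_+\|>0$. Consider the normalized set
\begin{equation*}
\Sigma_W:=\left\{z\in E:\ \|z\|=1,\ z\in\hat{E}(u)\ \text{for some}\ u\in W\right\},
\end{equation*}
and I claim it is compact. Given $z_n\in\Sigma_W$, write $z_n=t_nu_n+v_n$ with $u_n\in W$, $t_n\ge0$, $v_n\in E_0\oplus E_-$, and pass to a subsequence with $u_n\to u\in W$. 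Since $(z_n)_+=t_n(u_n)_+$ and $\|(z_n)_+\|\le1$, we get $t_n\le 1/\|(u_n)_+\|\le 1/m_0$; hence $t_n$ is bounded and, up to a subsequence, $t_n\to t\ge0$, so $(z_n)_+\to t u_+$ strongly. The component $(z_n)_0+(z_n)_-$ lies in the finite-dimensional space $E_0\oplus E_-$ and is bounded, hence converges strongly along a further subsequence. Therefore $z_n\to z$ strongly with $\|z\|=1$; setting $v:=z-tu$ we have $v_+=z_+-t u_+=0$, so $v\in E_0\oplus E_-$ and $z=tu+v\in\hat{E}(u)$, i.e.\ $z\in\Sigma_W$. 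This proves compactness. By Proposition \ref{lem1.1}(iii) the functional $z\mapsto\int_\Omega\phi_z|z|^q$ is continuous, and it is strictly positive on $\Sigma_W$ because $\int_\Omega\phi_z|z|^q=\int_\Omega|\nabla\phi_z|^2>0$ whenever $z\ne0$. Being continuous and positive on the compact set $\Sigma_W$, it attains a positive minimum, so $\beta_W>0$.

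Combining the two steps, for all $u\in W$ and $w\in\hat{E}(u)$ we obtain
\begin{equation*}
I_q(w)\le\frac12\|w\|^2-\frac{\beta_W}{2q}\|w\|^{2q},
\end{equation*}
whose right-hand side is nonpositive as soon as $\|w\|\ge R_W:=(q/\beta_W)^{1/(2q-2)}$; this is legitimate since $q>1$, and yields exactly the $R_W$ we seek. I expect the only real difficulty to be the uniform positivity $\beta_W>0$: as $u$ varies over $W$, a normalized $z\in\hat{E}(u)$ could in principle concentrate along $E_+$ with the coefficient $t$ blowing up, and its weak limit could then lose mass and drive $\int_\Omega\phi_z|z|^q$ to zero. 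The uniform bound $m_0=\inf_{u\in W}\|u_+\|>0$ furnished by the compactness of $W$ is precisely what caps $t_n$ and upgrades weak to strong convergence, thereby ruling this out.
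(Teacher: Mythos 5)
Your proof is correct, but it takes a genuinely different route from the paper's. The paper argues by contradiction: it assumes there are $u_n\in W$ and $w_n\in\hat{E}(u_n)$ with $I_{q}(w_n)\geq 0$ and $\|w_n\|\to\infty$, normalizes $v_n=w_n/\|w_n\|$, extracts a weak limit $v\neq 0$ (nonzero because the $E_+$-component converges strongly, thanks to the compactness of $W$), and then invokes the weak continuity of $z\mapsto\int_{\Omega}\phi_z|z|^q$ from Proposition \ref{lem1.1}(iii) to conclude that $\|w_n\|^{2q-2}\int_{\Omega}\phi_{v_n}|v_n|^q\to\infty$, contradicting $I_{q}(w_n)\geq 0$. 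You instead prove a direct, quantitative statement: the normalized union of slices $\Sigma_W$ is strongly compact, hence $\beta_W>0$, which yields the explicit radius $R_W=(q/\beta_W)^{1/(2q-2)}$. The same two structural facts drive both proofs --- strong convergence in the compact set $W$ caps the ray parameter $t_n$, and finite-dimensionality of $E_0\oplus E_-$ handles the remaining components --- but you use them to upgrade weak to strong convergence rather than to identify a nontrivial weak limit. Your argument is in effect the uniform-in-$u$ version of the estimate the paper already uses in Lemma \ref{lemgynehari3} for a single fixed $u$ (where positivity of the corresponding infimum $\beta$ follows from $\dim\hat{E}(u)<\infty$), with compactness of $W$ replacing finite-dimensionality. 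What this buys is an explicit bound and a self-contained compactness claim; it also sidesteps a small imprecision in the paper's normalization step (the bound $s_n\geq\frac{\sqrt{2}}{2}$ there neglects the $E_0$-component of $v_n$), at the cost of the extra bookkeeping in verifying compactness of $\Sigma_W$. One cosmetic remark: you read the hypothesis as $W\subset E\backslash(E_0\oplus E_-)$, which is clearly the intended meaning (the statement's $E_0\cup E_-$ is a typo, as both the paper's proof and Lemma \ref{lemgynehari4} confirm), and your use of $m_0=\inf_{u\in W}\|u_+\|>0$ depends on that reading.
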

\begin{proof}
We prove it by contradiction. Suppose the conclusion is not true. Then  there exist $u_n\in W$ and $w_n\in \hat{E}(u_n)$ such that $I_{q}(w_n)\geq 0,\ \forall n\geq 1$ and $\|w_n\|\to \infty$ as $n\to\infty.$ In view of $\hat{E}(u_n)=\hat{E}(\frac{(u_n)_+}{\|(u_n)_+\|}),$ without loss of generality, we may assume
$u_n\in E_+$ and $\|u_n\|=1.$ Since $W$ is a compact set, there exists a subsequence of $(u_n)_{n\geq 1}$, still denoted by $(u_n)_{n\geq 1},$ such that $u_n\to u\in E_+$ and $\|u\|=1.$

Let
$$v_n=\frac{w_n}{\|w_n\|}=s_nu_n+(v_n)_0+(v_n)_-.$$
 Then
\begin{equation}\label{quyuwuqiong}
0\leq \frac{I_{q}(w_n)}{\|w_n\|^2}=\frac{1}{2}(s_n^2-\|(v_n)_-\|^2)-\|w_n\|^{2q-2}\int_{\Omega} \phi_{v_n}|v_n|^q.
\end{equation}
So $\|(v_n)_-\|^2\leq s_n^2=\|v_n\|^2-\|(v_n)_-\|^2$ and $\frac{\sqrt{2}}{2}\leq s_n\leq 1.$ Thus, there is a subsequence of $(s_n)_{n\geq 1},$ still denoted by $(s_n)_{n\geq 1},$ such that $s_n\to s \neq 0,$ $v_n\rightharpoonup v$ in $E$, $v_n(x)\to v(x)$ a.e. in $\Omega$.
Hence
$$v=su+v_0+v_-\neq 0, \quad \int_{\Omega}\phi_{v_n}|v_n|^q\to \int_{\Omega}\phi_{v}|v|^q
$$
and
$$\|w_n\|^{2q-2}\int_{\Omega} \phi_{v_n}|v_n|^q\to \infty.$$
Therefore, the right side of \eqref{quyuwuqiong} tends to $-\infty,$ which is contradiction. The proof is complete.
\end{proof}

\begin{Lemma}\label{lemgynehari4}
The map $E \backslash (E_0\oplus E_-)\to M,$  $u\mapsto \hat{m}(u)$ is continuous.
\end{Lemma}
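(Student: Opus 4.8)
The plan is to reduce the statement to the unit sphere $S^+=\{u\in E_+:\|u\|=1\}$ and then combine the compactness from Lemma \ref{lemgynehari4-1} with the uniqueness of the maximizer from Lemma \ref{lemgynehari3}. The starting observation is that $\hat{E}(u)=\mathbb{R}_0^+u\oplus E_0\oplus E_-=\mathbb{R}_0^+u_+\oplus E_0\oplus E_-$ depends only on the ray $\mathbb{R}^+u_+$, so $\hat{m}(u)=\hat{m}\big(u_+/\|u_+\|\big)$. Since $u\mapsto u_+$ is continuous and $u_+\neq 0$ whenever $u\notin E_0\oplus E_-$, it suffices to prove continuity of $\hat{m}$ on $S^+$. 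Thus I would take $u_n\to u$ in $S^+$ and write $\hat{m}(u_n)=t_nu_n+w_n$ with $t_n\geq 0$ and $w_n\in E_0\oplus E_-$, noting that $t_nu_n$ is precisely the $E_+$-component of $\hat{m}(u_n)$, so $t_n=\|(\hat{m}(u_n))_+\|$.

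The next step is to show that $(\hat{m}(u_n))_n$ is bounded and, in fact, strongly precompact. The set $W=\{u_n:n\geq 1\}\cup\{u\}$ is compact and contained in $E\backslash(E_0\oplus E_-)\subset E\backslash(E_0\cup E_-)$, so Lemma \ref{lemgynehari4-1} yields $R_W>0$ with $I_q\leq 0$ on $\hat{E}(u_n)\backslash B_{R_W}(0)$ for every $n$. Since $\hat{m}(u_n)$ maximizes $I_q$ over $\hat{E}(u_n)$ and $I_q(\hat{m}(u_n))\geq c_0>0$ by Lemma \ref{lemgynehari2}(i), we must have $\|\hat{m}(u_n)\|\leq R_W$; consequently $t_n\leq R_W$, and Lemma \ref{lemgynehari2}(ii) gives $t_n\geq\sqrt{2c_0}>0$, while $\|w_n\|\leq R_W$ as well. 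Because $E_0\oplus E_-$ is finite-dimensional, I can pass to a subsequence along which $t_n\to t_*\in[\sqrt{2c_0},R_W]$ and $w_n\to w_*$ strongly; combined with $u_n\to u$ this gives strong convergence
$$\hat{m}(u_n)=t_nu_n+w_n\ \longrightarrow\ t_*u+w_*=:\xi\in\hat{E}(u).$$

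Finally I would identify $\xi$ with $\hat{m}(u)$. Given any $v=su+w\in\hat{E}(u)$ with $s\geq 0$ and $w\in E_0\oplus E_-$, the comparison elements $v_n:=su_n+w\in\hat{E}(u_n)$ satisfy $v_n\to v$, so the maximality of $\hat{m}(u_n)$ together with the continuity of $I_q$ yields $I_q(\xi)=\lim_n I_q(\hat{m}(u_n))\geq\lim_n I_q(v_n)=I_q(v)$. Hence $\xi$ maximizes $I_q$ over $\hat{E}(u)$, and the uniqueness in Lemma \ref{lemgynehari3} forces $\xi=\hat{m}(u)$. Since every subsequence of $(\hat{m}(u_n))_n$ admits a further subsequence converging to this same limit $\hat{m}(u)$, the whole sequence converges, which proves continuity. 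I expect the main obstacle to be the compactness step in the infinite-dimensional space $E$: a priori one controls only the finite-dimensional part $w_n$, and it is the uniform bound from Lemma \ref{lemgynehari4-1} together with the fact that the $E_+$-part of $\hat{m}(u_n)$ is pinned to the converging direction $u_n$ that upgrades weak to strong convergence and allows passing to the limit in the maximality inequality.
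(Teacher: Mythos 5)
Your proof is correct and takes essentially the same route as the paper's: reduce to $u_n\in E_+$ with $\|u_n\|=1$, use Lemma \ref{lemgynehari4-1} together with $I_{q}(\hat{m}(u_n))\geq c_0>0$ to bound $\hat{m}(u_n)$, exploit the finite dimensionality of $E_0\oplus E_-$ and the pinned direction $u_n$ to get strong subsequential convergence $\hat{m}(u_n)\to \xi\in\hat{E}(u)$, and identify $\xi=\hat{m}(u)$ by uniqueness. The only deviation is the closing identification, where you pass to the limit in the maximality inequality against comparison points $su_n+w$ and invoke the uniqueness of the global maximizer from Lemmas \ref{jidazhi}--\ref{lemgynehari3}, while the paper instead passes to the limit in $I_{q}'(\hat{m}(u_n))\hat{m}(u_n)=0$; your variant is if anything slightly more complete, since it also records the standard subsequence-to-full-sequence step that the paper leaves implicit.
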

\begin{proof}
Let $(u_n)_{n\geq 1}\subset E\backslash(E_0\oplus E_-)$ be a sequence satisfying $u_n\to u$ in $E\backslash(E_0\oplus E_-)$. In view of $\hat{m}(u)=\hat{m}(u_+)=\hat{m}(\frac{u_+}{\|u_+\|})$, for the sake of convenience, we assume $u_n\in E_+\backslash\{0\}$ and $\|u_n\|=\|u\|=1$.  Then by Lemma \ref{lemgynehari4-1}, there exists some $\bar{R}>0$ such that for $n$ large enough,  $\|\hat{m}(u_n)\|\leq \bar{R}$. Since
$$\hat{m}(u_n)=\|\hat{m}(u_n)_+\|(u_n)_+ +\hat{m}(u_n)_0+\hat{m}(u_n)_-,$$
then there exists subsequence of $(\hat{m}(u_n))$ such that
$$\|\hat{m}(u_n)_+\|\to s, \quad
\hat{m}(u_n)_0+\hat{m}(u_n)_-\to v\in E_0\oplus E_-.$$
 Hence,
$$\lim\limits_{n\to\infty}I_{q}(\hat{m}(u_n))=\lim\limits_{n\to\infty}I_{q}(\|\hat{m}(u_n)_+\|(u_n)_++\hat{m}(u_n)_0+\hat{m}(u_n)_-)=I_{q}(su+v)$$
and
$$\begin{array}{lll}
0=\lim\limits_{n\to\infty}I_{q}'(\hat{m}(u_n))\hat{m}(u_n)=I_{q}'(su+v)(su+v).
\end{array}$$
So $su+v=\hat{m}(u)$ and $\hat{m}(u_n)\to \hat{m}(u).$ The conclusion follows.
\end{proof}

\begin{Lemma}\label{lemgynehari5}
Suppose that $(u_n)_{n\geq 1}\subset M$ is a P.S. sequence of functional $I_{q}$, then there is a convergence subsequence.
\end{Lemma}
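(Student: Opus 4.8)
The plan is to verify the Palais--Smale condition in three stages: first bound the sequence, then extract a weak limit and use the compactness of the nonlocal term, and finally upgrade weak convergence to strong convergence by exploiting that $E_0\oplus E_-$ is finite dimensional.

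First I would prove boundedness. Since $(u_n)\subset M$ we have $I_{q}'(u_n)u_n=0$, so $I_{q}(u_n)=(\frac12-\frac1{2q})\int_{\Omega}\phi_{u_n}|u_n|^q$, and as $(u_n)$ is a P.S. sequence this forces $\int_{\Omega}\phi_{u_n}|u_n|^q$ to stay bounded. Suppose toward a contradiction that $\|u_n\|\to\infty$, and set $v_n=u_n/\|u_n\|$, so $\|v_n\|=1$. By Proposition \ref{lem1.1}(ii), $\int_{\Omega}\phi_{v_n}|v_n|^q=\|u_n\|^{-2q}\int_{\Omega}\phi_{u_n}|u_n|^q\to0$. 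Writing $v_n=(v_n)_++(v_n)_0+(v_n)_-$ and using $\dim(E_0\oplus E_-)<\infty$, up to a subsequence $(v_n)_0\to v_0$ and $(v_n)_-\to v_-$ strongly while $(v_n)_+\rightharpoonup v_+$; hence $v_n\rightharpoonup v$ in $H_0^1(\Omega)$, and Proposition \ref{lem1.1}(iii) gives $\int_{\Omega}\phi_v|v|^q=\lim_{n\to\infty}\int_{\Omega}\phi_{v_n}|v_n|^q=0$, which forces $v=0$. In particular $\|(v_n)_-\|\to0$ and $\|(v_n)_0\|\to0$, so by orthogonality $\|(v_n)_+\|^2=1-\|(v_n)_0\|^2-\|(v_n)_-\|^2\to1$. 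On the other hand, dividing $I_{q}'(u_n)u_n=0$ by $\|u_n\|^2$ yields $\|(v_n)_+\|^2-\|(v_n)_-\|^2=\|u_n\|^{-2}\int_{\Omega}\phi_{u_n}|u_n|^q\to0$, whence $\|(v_n)_+\|^2\to0$, a contradiction. Thus $(u_n)$ is bounded, and I may pass to a subsequence with $u_n\rightharpoonup u$ in $H_0^1(\Omega)$, $u_n\to u$ in $L^s(\Omega)$ for every $s\in[1,6)$ by the compact Sobolev embedding on the bounded domain $\Omega$, and $u_n\to u$ a.e. By Proposition \ref{lem1.1}(iii), $\int_{\Omega}\phi_{u_n}|u_n|^q\to\int_{\Omega}\phi_u|u|^q$, and the same compactness shows $\int_{\Omega}\phi_{u_n}|u_n|^{q-2}u_n\varphi\to\int_{\Omega}\phi_u|u|^{q-2}u\varphi$ for each $\varphi\in H_0^1(\Omega)$.

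For the strong convergence I would test $I_{q}'(u_n)-I_{q}'(u)$ against $u_n-u$. Since $I_{q}'(u_n)\to0$ and $u_n-u$ is bounded, $I_{q}'(u_n)(u_n-u)\to0$; since $u_n\rightharpoonup u$ and the nonlinear term is compact, $I_{q}'(u)(u_n-u)\to0$ as well. The nonlocal contributions cancel in the limit by the previous step, leaving $a(u_n-u,u_n-u)=\|(u_n-u)_+\|^2-\|(u_n-u)_-\|^2\to0$, where $a$ is the bilinear form introduced in the proof of Lemma \ref{jidazhi}. Because $E_0\oplus E_-$ is finite dimensional, weak convergence there is strong, so $(u_n-u)_0\to0$ and $(u_n-u)_-\to0$; combined with the preceding identity this gives $\|(u_n-u)_+\|\to0$, hence $u_n\to u$ in $H_0^1(\Omega)$, as required.

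The main obstacle is the boundedness step: the quadratic form $a(u,u)=\|u_+\|^2-\|u_-\|^2$ is indefinite, so the usual coercivity estimate is unavailable and one cannot control $\|u_n\|$ from $I_{q}(u_n)-\frac12 I_{q}'(u_n)u_n$ alone. The resolution is to rescale and combine two features special to this setting: the compactness of the nonlocal term on the bounded domain (Proposition \ref{lem1.1}(iii)), which forces the weak limit of $v_n$ to vanish, together with the finite dimensionality of $E_0\oplus E_-$, which forces the positive component to carry all of the unit norm. These same two features then render the final strong-convergence step routine.
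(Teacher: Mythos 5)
Your proof is correct, but it takes a genuinely different route from the paper's in the crucial boundedness step. The paper rules out $\|u_n\|\to\infty$ via the maximality property behind Lemma \ref{jidazhi}: after showing $v_n=u_n/\|u_n\|\rightharpoonup 0$ and $(v_n)_+\nrightarrow 0$, it uses $I_{q}(u_n)\geq I_{q}(s(v_n)_+)$ (valid because $s(v_n)_+\in\hat{E}(u_n)$ and $u_n$ is the maximum of $I_{q}$ there), notes $\int_{\Omega}\phi_{(v_n)_+}|(v_n)_+|^q\to 0$ by Proposition \ref{lem1.1}(iii), and gets a contradiction by taking $s=2\sqrt{d}/\gamma$. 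You never invoke maximality: from $u_n\in M$ you extract the identities $I_{q}(u_n)=(\frac12-\frac{1}{2q})\int_{\Omega}\phi_{u_n}|u_n|^q$ and $\|(v_n)_+\|^2-\|(v_n)_-\|^2=\|u_n\|^{-2}\int_{\Omega}\phi_{u_n}|u_n|^q$, use the homogeneity $\phi_{tu}=t^q\phi_u$ to force $\int_{\Omega}\phi_{v_n}|v_n|^q=\|u_n\|^{-2q}\int_{\Omega}\phi_{u_n}|u_n|^q\to 0$ (hence $v=0$ by Proposition \ref{lem1.1}(iii)), and reach the clean contradiction $\|(v_n)_+\|^2\to 1$ versus $\|(v_n)_+\|^2\to 0$. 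Your version is more elementary: it needs only the constraint $I_{q}'(u_n)u_n=0$, boundedness of the energy, Proposition \ref{lem1.1} and $\dim(E_0\oplus E_-)<\infty$, and it in fact shows every energy-bounded sequence in $M$ is bounded without using $I_{q}'(u_n)\to 0$ at all; the paper's maximality argument is the standard Szulkin--Weth template and would survive non-homogeneous nonlinearities, where your scaling computation breaks down. The endgame also differs: the paper deduces $\int_{\Omega}|\nabla u_n|^2\to\int_{\Omega}|\nabla u|^2$ by comparing $I_{q}'(u_n)u_n=0=I_{q}'(u)u$, while you test $I_{q}'(u_n)-I_{q}'(u)$ against $u_n-u$ and use finite dimensionality of $E_0\oplus E_-$ a second time; both work. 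The only point to spell out in your write-up is that the cancellation of the nonlocal contributions against the moving test function $u_n-u$ is not literally Proposition \ref{lem1.1}(iii) (which concerns a fixed test function), but it follows from a one-line H\"older estimate, e.g. $\int_{\Omega}\phi_{u_n}|u_n|^{q-1}|u_n-u| \leq \|\phi_{u_n}\|_{L^6}\||u_n|^{q-1}\|_{L^{6/(q-1)}}\|u_n-u\|_{L^{6/(6-q)}}\to 0$, using $q\in(1,5)$ and $u_n\to u$ in $L^s(\Omega)$ for $s\in[1,6)$.
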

\begin{proof}
Let $(u_n)_{n\geq 1}\subset M$ be a P.S. sequence satisfying $I_{q}(u_n)\leq d$ and $I_{q}'(u_n)\to 0$ for some $d>0.$

First, we claim that $(u_n)_{n\geq 1}$ is bounded. In fact, suppose on the contrary that $(u_n)_{n\geq 1}$ is unbounded. Let $v_n:=\frac{u_n}{\|u_n\|}$,  then there exists subsequence such that $\|u_n\|\to\infty$ and $v_n\rightharpoonup v$. Moreover,  $v=0$ and $(v_n)_+\nrightarrow 0$.
Observe
$$0\leq \frac{I_{q}(u_n)}{\|u_n\|^2}=\frac{1}{2}(\|(v_n)_+\|^2-\|(v_n)_-\|^2)-\frac{1}{2q}\|u_n\|^{2q-2}\int_{\Omega}\phi_{v_n}|v_n|^q.
$$
However, if $v\neq 0$, the right side of the inequality tends to $-\infty$ as $n\to\infty,$ which is a contradiction. Hence $v=0.$

If $(v_n)_+\to 0$, since the above inequality implies $\|(v_n)_+\|^2\geq \|(v_n)_-\|^2$, we have $(v_n)_-\to 0$. Thus
$$\|(v_n)_0\|^2=\|v_n\|^2-\|(v_n)_+\|^2-\|(v_n)_-\|^2\to 1,$$
which implies $v\neq 0.$ This is a contradiction. Hence $(v_n)_+\nrightarrow 0$.
Therefore, there exists some $\gamma>0$ such that $\|(v_n)_+\|\geq \gamma>0, \forall\ n\geq 1$.  By Proposition \ref{lem1.1}(iii), it follows that for any $s>0,$
\begin{equation}\begin{array}{lll}
d\geq I_{q}(u_n)\geq I_{q}(s(v_n)_+)
\geq \frac{1}{2}s^2\gamma^2-\frac{1}{2q}s^{2q}\int_{\Omega}\phi_{(v_n)_+}(v_n)_+
\to  \frac{1}{2}s^2\gamma^2.
\end{array}\end{equation}
Clearly, by taking $s=\frac{2\sqrt{d}}{\gamma}$, we get a contradiction . Thus the claim follows.

Furthermore, there is $u\in H_0^1(\Omega)$ such that $u_n\rightharpoonup u$ in $H_0^1(\Omega)$. Then it follows from $I_{q}'(u_n)\to 0$ that $I_{q}'(u)=0$ and
\begin{equation}\label{uundengyu0}
I_{q}'(u)u=0=I_{q}'(u_n)u_n.
\end{equation}
Since
$$\int_{\Omega}|u_n|^2\to \int_{\Omega}|u|^2\quad\mbox{and}\quad \int_{\Omega}\phi_{u_n}|u_n|^q\to\int_{\Omega}\phi_{u}|u|^q,$$
it follows from  \eqref{uundengyu0} that
 $\int_{\Omega}|\nabla u_n|^2\to \int_{\Omega}|u|^2$ and
 $u_n\to u$  $H_0^1(\Omega).$
The proof is complete.
\end{proof}

\textbf{Proof of Theorem \ref{zhdl-2}:}
By a direct computation and Lemmas \ref{lemgynehari5}, \ref{lemgynehari3}, \ref{lemgynehari2} and \ref{lemgynehari4-1},  we conclude $I_{q}\in C^1(E,\mathbb{R})$ satisfies $(P.S.)$ condition and ($B_1$)($B_2$)($B_3$) in Proposition \ref{generanehari}. Then it follows from Lemma \ref{lemgynehari2} and Proposition \ref{generanehari} that there exists a minimizing sequence $(w_n)_{n\geq 1}\subset S^+$ such that
$\Psi(w_n)\to \inf_{S^+}\Psi,$ where $\Psi: S^+\to \mathbb{R},$ $\Psi(v)=I_{q}(\hat{m}(v)).$ By the Ekeland principle, there holds
$\Psi'(w_n)\to 0.$ Hence, by Proposition \ref{generanehari}(ii), we have $u_n:=\hat{m}(w_n)$ is a P.S. sequence of $I_{q}$. This, combined with Lemma \ref{lemgynehari5} and Proposition \ref{generanehari}, shows that there exists a minimizer $w\in S^+$ of $\Psi$. Thus, $u:=\hat{m}(w)$ is the ground state and $I_{q}(u)=c_0,$ which implies $u\neq 0.$

Furthermore, we can show that $u$ is a nodal solution. In fact, if $u$ does not change sign, without loss of generality, we assume $u\geq 0.$ By choosing first eigenfunction $e_1>0$ of $-\Delta$,  we have
\begin{equation*}\begin{array}{lll}
0&=I_{q}'(u)e_1\\
&=\int_{\Omega}\nabla u\nabla e_1-\lambda\int_{\Omega} u e_1-\int_{\Omega}\phi_{u}|u|^{q-2}ue_1\\
&=(\lambda_1-\lambda)\int_{\Omega} u e_1-\int_{\Omega}\phi_{u}|u|^{q-2}ue_1\\
&<0.
\end{array}\end{equation*}
This is a  contradiction.

Therefore, $u$ is a least energy nodal solution, and the proof is complete.

\bigskip
Changfeng Gui

College of Mathematics and Econometrics

Hunan University

Changsha 410082, China

\vskip 0.25cm
Department of Mathematics

University of Texas at San Antonio

San Antonio, TX 78249 USA

E-mail: changfeng.gui@utsa.edu

\bigskip

Hui Guo

College of Mathematics and Computing Science 

Hunan University of Science and Technology 

Xiangtan, Hunan 411201, P. R. China

\vskip 0.25cm

College of Mathematics and Econometrics

Hunan University

Changsha 410082, China

Email:  huiguo\_math@163.com

\medskip

\medskip
\end{document}